\def\namedlabel#1#2{\begingroup
#2%
\def\@currentlabel{#2}%
\phantomsection\label{#1}\endgroup
}
\newtheorem{theorem}{Theorem}[section]
\newtheorem{corollary}[theorem]{Corollary}
\newtheorem{lemma}[theorem]{Lemma}
\newtheorem{proposition}[theorem]{Proposition}
\newtheorem{theoremx}{Theorem}
\newtheorem{algorithm}[theorem]{Algorithm}
\newtheorem{corollaryx}{Corollary}
\theoremstyle{definition} 
\newtheorem{definition}[theorem]{Definition}
\newtheorem{example}[theorem]{Example}
\newtheorem{remark}[theorem]{Remark}
\numberwithin{equation}{subsection}
\newcommand{\crd}{\color {red}}
\definecolor{blue-violet}{rgb}{0.54, 0.17, 0.89}
\definecolor{Blue}{rgb}{0.01, 0.28, 1.0}
\definecolor{gGreen}{rgb}{0.2, 0.8, 0.2}
\definecolor{Green}{rgb}{0.04, 0.85, 0.32}
\def\@tocline#1#2#3#4#5#6#7{\relax
  \ifnum #1>\c@tocdepth % then omit
  \else
    \par \addpenalty\@secpenalty\addvspace{#2}%
    \begingroup \hyphenpenalty\@M
    \@ifempty{#4}{%
      \@tempdima\csname r@tocindent\number#1\endcsname\relax
    }{%
      \@tempdima#4\relax
    }%
    \parindent\z@ \leftskip#3\relax \advance\leftskip\@tempdima\relax
    \rightskip\@pnumwidth plus4em \parfillskip-\@pnumwidth
    #5\leavevmode\hskip-\@tempdima
      \ifcase #1
       \or\or \hskip 1.9em \or \hskip 2em \else \hskip 3em \fi%
      #6\nobreak\relax
    \dotfill\hbox to\@pnumwidth{\@tocpagenum{#7}}\par
    \nobreak
    \endgroup
  \fi}
\newcommand{\ZZ}{\mathbb{Z}}
\newcommand{\QQ}{\mathbb{Q}}
\newcommand{\KK}{\mathbb{K}}
\newcommand{\cA}{\mathscr{A}}
\newcommand{\cP}{{\mathcal P}}
\newcommand{\bZ}{{\mathbb{Z}}}
\newcommand{\lra}{{\longrightarrow}}
\begin{document}

\title[Morse resolutions and Betti splittings]{Morse resolutions of monomial ideals and Betti splittings}

\author[J. \`Alvarez Montaner]{Josep \`Alvarez Montaner}
\address{Departament de Matem\`atiques  and  Institut de Matem\`atiques de la UPC-BarcelonaTech (IMTech)\\  Universitat Polit\`ecnica de Catalunya \\ Av.~Diagonal 647, Barcelona
08028; and Centre de Recerca Matem\`atica (CRM), Bellaterra, Barcelona 08193.} 
\email{Josep.Alvarez@upc.edu}

\author[M.L. Aparicio Garc\'ia]{Mar\'ia Luc\'ia Aparicio Garc\'ia}
\address{Departament de Matem\`atiques  \\  Universitat Polit\`ecnica de Catalunya \\ Av.~Diagonal 647, Barcelona
08028} 
\email{Maria.Lucia.Aparicio@estudiantat.upc.edu}

\author[A.Mafi]{Amir Mafi}
\address{Department of Mathematics, University Of Kurdistan, P.O. Box: 416, Sanandaj, Iran.}
\email{A\_Mafi@ipm.ir}

\thanks{JAM is partially supported by the projects PID2019-103849GB-I00 and PID2023-146936NB-I00 financed by the Spanish State Agency MCIN/AEI/10.13039/501100011033/ FEDER, UE, by the GEOMVAP 2021-SGR-00603 AGAUR project
and  Spanish State Research Agency, through the Severo Ochoa and Mar\'ia de Maeztu Program for Centers and Units of Excellence in R$\&$D (project CEX2020-001084-M). AM was supported by CIMPA-ICTP "Research in Pairs" programme. MLAG is supported by an INIREC grant associated to the GEOMVAP project 2021 SGR 00603}

%\keywords {Free resolution, monomial ideals, discrete Morse theory.}

%\subjclass[2020]{Primary 13D02, 13F55, 57Q70}

\begin{abstract}
We use discrete Morse theory to study free resolutions of monomial ideals in combination with splitting techniques. We establish the minimality of such pruned resolutions for several classes of ideals, including stable and linear quotient ideals. In particular, we unify classical constructions such as the Eliahou-Kervaire and Herzog-Takayama resolutions within the pruned resolution framework. Additionally, we introduce methods to reduce the minimality study of a pruned resolution for an ideal to that of a smaller subideal and present a variant of our pruned resolution for powers of monomial ideals.
\end{abstract}

\maketitle

\section{Introduction}

The study of minimal free resolutions of monomial ideals in polynomial rings with coefficients in a field has garnered significant attention from commutative algebraists over the past few decades. These resolutions provide valuable information about the structure of the ideal, including its generators, relations, and higher-order syzygies.

\vskip 2mm

Explicit minimal free resolutions are often limited to specific classes of monomial ideals. For example, the {\it Eliahou-Kervaire resolution} \cite{EK} applies to {\it stable} ideals, while the {\it Herzog-Takayama resolution} \cite{HerzogTakayama} addresses ideals with {\it linear quotients}. Alternatively, non-minimal free resolutions, such as the Taylor resolution \cite{Taylor} and the Lyubeznik resolution \cite{Lyu88}, are easier to construct and fit within the broader framework of simplicial resolutions. This theory, developed by Bayer, Peeva, and Sturmfels \cite{BayerPeevaSturmfels} and later generalized to regular cellular and CW-resolutions \cite{BayerSturmfels, JollenWelker}, connects the algebraic structure of resolutions to the topology of associated complexes. However, as shown by Velasco \cite{Velasco}, not all minimal free resolutions of monomial ideals can be supported on CW-complexes.

\vskip 2mm

Using the discrete Morse theory developed by Forman \cite{Forman} and Chari \cite{Char}, Batzies and Welker proposed in \cite{BatziesWelker} a method to simplify regular cellular resolutions.  However, discrete Morse theory has a notable limitation: it cannot be applied iteratively. To overcome this, {\it algebraic discrete Morse theory}, developed independently by Sk\"oldberg \cite{Skol} and J\"ollenbeck and Welker \cite{JollenWelker}, offers a more flexible alternative.

\vskip 2mm

The key idea behind using discrete Morse theory to study free resolutions is to encode the information from the Taylor resolution (or another resolution supported on a simplicial complex or a regular cell complex)  into a directed graph, where certain edges reflect the obstruction to minimality. By systematically removing these edges in an efficient order, we generate a smaller resolution.  We loosely refer to any resolution  obtained in this way as a {\it Morse resolution}.
Examples of such resolutions may be found in \cite{BatziesWelker}, \cite{JollenWelker}, \cite{Joll}, \cite{AMFG20}, \cite{BarileMacchia},\cite{ChauKara}, \cite{EN-arXiv}, \cite{Faridipd1}, \cite{FaridiArtinian}, \cite{Edu15}, \cite{Edu}.

\vskip 2mm

In this paper, we present several extensions of the {\it pruned resolutions} introduced by the first author in collaboration with Fern\'andez-Ramos and Gimenez \cite{AMFG20}.. Section \ref{Sec2} provides a detailed overview of the original pruned resolution. In Section \ref{Sec2_3}, we propose a variation designed for powers of squarefree monomial ideals. This approach replaces the simplicial complex associated with the Taylor resolution with a simplicial complex introduced by Cooper, El Khoury, Faridi, Mayes-Tang, Morey, \c{S}ega, and Spiroff \cite{FaridiSimplicial}. The main contribution of the paper is presented in Section \ref{Sec2_2}, where we combine pruned resolutions with {\it Betti splittings}, a concept introduced by Francisco, H`a, and Van Tuyl \cite{FTVT2} (see also \cite{EK}) that provides a recursive method to compute Betti numbers.

 \vskip 2mm

   If no confusion arises with the version chosen, we will loosely use the term monomial ideal {\it admitting  a minimal pruned resolution} to indicate that there exists an order of the generators of $I$ such that the cellular resolutions obtained in either Theorem \ref{C1}, Theorem \ref{C3} or the recursive version of Theorem \ref{C2} is minimal.

 \vskip 2mm

Using the original version of the pruned resolution presented in Section \ref{Sec2} we prove that monomial ideals with at most $5$ generators admit a minimal pruned resolution (Theorem \ref{thm_q_5}). Additionally, we establish a persistence property for the minimality of subideals (see Section \ref{Minimal_11}), a result that proves useful in the development of Section \ref{Sec2_2}.
In Section \ref{Sec2_3} we present a version of a pruned resolution for powers of monomial ideals. Initial computations with small examples indicate promising potential for this approach in identifying minimal Morse resolutions for such ideals.

\vskip 2mm

In Section \ref{Sec2_2} we introduce a Morse resolution that is based on the notion of  Betti splitting. As a noteworthy side result, we give a mild generalization, for the case of vertex partitions, of a result of Bolognini \cite{Bolognini} concerning Betti splittings of componentwise linear ideals.
 In this section we also check the minimality of the pruned resolution for  several  classes of monomial ideal such as {\it lexsegment, strongly stable, stable, vertex splittable, linear quotient} or {\it componentwise linear}  (see Definition \ref{Def_ideals} for details).
They have been extensively studied in the literature and there is a hierarchy  among them  given by the following diagram:

\vskip 4mm

\begin{tikzcd}[row sep=large, column sep=large, arrows=Rightarrow]
\begin{tabular}{c}
Strongly \\
Stable
\end{tabular} \arrow[r] &
  \text{Stable} \arrow[r] \arrow[d,dashed, "Thm. \ref{thm1}" ']  &
 \begin{tabular}{c}
Linear \\
Quotients
\end{tabular} \arrow[r] \arrow[d,dashed, "Thm. \ref{thm3}"] & \begin{tabular}{c}
Componentwise \\
Linear
\end{tabular} \\
\text{Lexsegment} \arrow[u]   & \begin{tabular}{c}
Vertex \\
Splittable
\end{tabular} \arrow[ru] \arrow[r,dashed, "Thm. \ref{thm2}" ']  & \begin{tabular}{c}
Minimal \\ Pruned \\
Resolution
\end{tabular}
\end{tikzcd}

Our contribution is reflected in the dashed arrows. First we prove the following result. 

\newpage

\begin{theoremx}(Theorem \ref{thm1})
    \begin{itemize}
     \item Stable ideals are vertex splittable.
     \end{itemize}
\end{theoremx}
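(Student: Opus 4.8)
The plan is to induct on the total degree $\sum_{u\in G(I)}\deg u$ of the minimal monomial generating set $G(I)$, and at each step to exhibit a vertex splitting of a stable ideal $I$ with respect to its \emph{smallest} variable. Principal ideals are vertex splittable by definition, so assume $I$ is not principal. I would first record that $x_1$ must divide some generator: otherwise pick a generator $v$ with $m:=\max(v)\ge 2$ (such a $v$ exists, since a stable ideal all of whose generators are powers of $x_1$ is principal) and apply stability with the index $i=1<m$ to get $x_1\,v/x_m\in I$; this $x_1$-divisible monomial must be a multiple of an $x_1$-divisible generator, by the minimality argument below. With $x_1$ dividing some generator, set
\[
 I_2=\big(u\in G(I):x_1\nmid u\big),\qquad I_1=\big(u/x_1:u\in G(I),\ x_1\mid u\big),
\]
so that $I=x_1 I_1+I_2$ tautologically and both pieces have strictly smaller total degree than $I$.

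The heart of the argument — and the step I expect to be the main obstacle — is the containment $I_2\subseteq I_1$, which is exactly what upgrades $I=x_1 I_1+I_2$ into a \emph{vertex} splitting (equivalently, what forces $I_1=(I:x_1)$). To prove it, take a generator $v\in G(I)$ with $x_1\nmid v$ and put $m=\max(v)\ge 2$. Stability applied to $v$ at the index $i=1<m$ yields $w:=x_1\,v/x_m\in I$, a monomial whose $x_1$-degree is exactly one. Being in $I$, $w$ is divisible by some $u\in G(I)$, and I claim $x_1\mid u$. If not, then $\gcd(u,x_1)=1$ forces $u\mid w/x_1=v/x_m$, a proper divisor of $v$; hence $u\mid v$ with $u\ne v$, contradicting the minimality of $G(I)$. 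Therefore $x_1\mid u$, and from $u\mid w\mid x_1 v$ we obtain $(u/x_1)\mid v$ with $u/x_1\in G(I_1)$, i.e.\ $v\in I_1$. The delicate point is precisely this minimality/divisibility bookkeeping, which is also what singles out the smallest variable as the correct choice: splitting at the largest variable already fails for $I=(x_1,x_2^2)$.

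It then remains to verify the bookkeeping and close the induction. Minimality of $G(I)$ shows that $x_1 G(I_1)$ and $G(I_2)$ are exactly the generators of $I$ divisible, respectively not divisible, by $x_1$, and that each is a minimal generating set; hence $G(I)=x_1 G(I_1)\sqcup G(I_2)$, and together with $I_2\subseteq I_1$ this makes the decomposition a vertex splitting with $I_1=(I:x_1)$. Both factors remain stable: any stability move in $x_2,\dots,x_n$ sends an $x_1$-free element of $I$ to an $x_1$-free element of $I$, hence into $I_2$, so $I_2$ is stable in $\KK[x_2,\dots,x_n]$; and if $x_1 w\in I$ then stability of $I$ gives $x_1\big(x_i w/x_{\max(w)}\big)\in I$, so $(I:x_1)=I_1$ is stable as well. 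Since $I_1$ and $I_2$ both have strictly smaller total degree than $I$, the inductive hypothesis makes them vertex splittable, and therefore so is $I$.
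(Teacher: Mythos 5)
Your proof is correct and takes essentially the same route as the paper's: split at a variable (the smallest one), use the stability exchange $x_1\,(v/x_{\max(v)})\in I$ together with minimality of $G(I)$ to force $K\subseteq J$, check that both pieces are again stable, and induct. The only differences are cosmetic — the paper inducts on the number of generators rather than on total degree, and it leaves implicit both the choice of the smallest variable and the divisibility bookkeeping that you spell out.
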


Then we prove the minimality of the pruned resolution for the following classes of ideals.

\begin{theoremx}
We have:
 \begin{itemize}
     \item Vertex splittable ideals admit a minimal pruned resolution. (Theorem \ref{thm2}) 
     \item  Linear quotient ideals admit a minimal pruned resolution. (Theorem \ref{thm3})
     \item A class of $p$-Borel fixed ideals considered by Batzies and Welker \cite{BatziesWelker} admits a minimal pruned resolution. (Corollary \ref{pBorel})
 \end{itemize}  
\end{theoremx}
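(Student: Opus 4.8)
The plan is to prove all three statements through a single mechanism: the recursive Betti-splitting form of the pruned resolution (the recursive version of Theorem \ref{C2}), together with the persistence property for subideals from Section \ref{Minimal_11}. The guiding observation is that minimality of a resolution assembled by iterating a construction is governed by whether each step is a genuine \emph{Betti splitting}. Recall that if $I = I_1 + I_2$ with $G(I) = G(I_1) \sqcup G(I_2)$ is a Betti splitting, then
\[
\beta_{i,j}(I) = \beta_{i,j}(I_1) + \beta_{i,j}(I_2) + \beta_{i-1,j}(I_1 \cap I_2)
\]
for all $i,j$. Hence a mapping-cone-type resolution built from \emph{minimal} resolutions of $I_1$, $I_2$ and $I_1 \cap I_2$ realizes exactly these Betti numbers and is therefore automatically minimal. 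I would first isolate this as a lemma: if the recursive pruned resolution of Theorem \ref{C2} arises from a splitting that is a Betti splitting and the pruned resolutions of the three smaller ideals are minimal, then the pruned resolution of $I$ is minimal. This reduces every verification below to a numerical identity rather than a direct gradient-path computation of the algebraic Morse differential.

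For vertex splittable ideals (Theorem \ref{thm2}) I would induct on the number of minimal generators. By definition such an $I$ admits a vertex splitting $I = x_k J + K$ with $J = (I : x_k)$ and $K$ generated by the minimal generators not divisible by $x_k$, where $G(I) = x_k\, G(J) \sqcup G(K)$ and both $J$ and $K$ are again vertex splittable. The first step is to verify that this decomposition is a Betti splitting; this is precisely where I would invoke the mild generalization of Bolognini's theorem to vertex partitions announced in the introduction. Since $J$ and $K$ have strictly fewer generators, the inductive hypothesis supplies minimal pruned resolutions for them, and a separate treatment of the intersection term $x_k J \cap K$ (again via the recursive structure and persistence) completes the input to the lemma above, yielding minimality for $I$.

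For linear quotient ideals (Theorem \ref{thm3}) I would induct along the linear-quotient order $u_1, \dots, u_r$. Writing $I = I_{r-1} + (u_r)$ with $I_{r-1} = (u_1,\dots,u_{r-1})$, the hypothesis says $I_{r-1} : u_r$ is generated by variables, so $I_{r-1} \cap (u_r) = u_r\,(I_{r-1} : u_r)$ is generated one degree above $u_r$ by variable multiples; this is exactly the situation in which adjoining a single generator yields a Betti splitting whose associated mapping cone is minimal, recovering the Herzog--Takayama construction inside the pruned framework. The compatibility to check is that the pruned matching adapted to the order $u_1,\dots,u_r$ produces precisely this iterated-mapping-cone shape, so that minimality propagates up the chain via the lemma. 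Since vertex splittable ideals are a subclass of linear quotient ideals, Theorem \ref{thm3} logically re-proves Theorem \ref{thm2}; I would nonetheless retain the vertex-splittable argument, as it is the one that feeds directly into the recursive machinery of Section \ref{Sec2_2}.

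Finally, for the class of $p$-Borel fixed ideals of Batzies and Welker (Corollary \ref{pBorel}) I would compare matchings directly: the pruned resolution and the Batzies--Welker resolution both arise as algebraic Morse resolutions of the Taylor complex, so it suffices to show that the pruned matching has the same set of critical cells, degree by degree, as the acyclic matching exhibited in \cite{BatziesWelker}, whose criticality is already known to produce a minimal cellular resolution. The main obstacle throughout is controlling the algebraic Morse differential well enough to certify minimality; the Betti-splitting strategy is designed to sidestep this by reducing minimality to the numerical identity above, so the genuine work lies in establishing that each decomposition really is a Betti splitting and that the recursive pruned resolution genuinely assumes the mapping-cone form the induction requires.
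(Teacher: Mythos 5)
Your treatment of the first two bullets is essentially the paper's own argument. The paper proves Theorem \ref{thm2} by the same induction on the number of generators of $I=x_iJ+K$, and Theorem \ref{thm3} by splitting off the last generator in the linear-quotient order, exactly as you propose; your unstated ``key lemma'' (a Betti splitting whose three pieces admit minimal pruned resolutions forces the recursive pruned resolution of $I$ to be minimal, by comparing the recursive pruned Betti numbers with the Betti-splitting identity) is precisely the mechanism behind the paper's recursive definition, and it is correct. The only differences are in the supporting citations and in one place where you are vaguer than you should be: for the Betti splitting of a vertex splitting the paper quotes Moradi--Khosh-Ahang \cite{MKH-Math.Scand} directly (your route through Theorem \ref{T} also works, since $x_iJ$ has linear quotients and hence is componentwise linear, but it needs that extra observation); and where you promise ``a separate treatment of the intersection term,'' the paper disposes of it in one line via the identity $x_iJ\cap K=x_iK$ (valid because $K\subseteq J$) together with Lemma \ref{L1}, and analogously via $(m_1,\dots,m_{q-1})\cap(m_q)=m_q\,\mathfrak{p}$ with $\mathfrak{p}$ a prime monomial ideal in the linear-quotient case. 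You should make these intersection computations explicit, since they are what makes the induction close.

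The genuine gap is in the third bullet. Your plan is to show that the pruned matching has, multidegree by multidegree, the same critical cells as the acyclic matching of \cite{BatziesWelker}/\cite{JollenWelker}, but you give no mechanism for doing this; the pruned matching depends on a choice of generator order, the two matchings are constructed by entirely different rules, and comparing their critical cells is at least as hard as verifying minimality directly from the Morse differential --- it is exactly the work being deferred. The paper takes a different, constructive route. It first proves a product theorem (Theorem \ref{thm_product}): if every generator of $I$ has strictly smaller degree in each variable than every generator of $J$, then lcm's in the Taylor complex of $IJ$ factor into lcm's for $I$ and for $J$, so one can attach a copy of the Taylor graph of $I$ to each cell of the minimal pruned resolution of $J$, prune, and obtain a minimal pruned resolution of $IJ$. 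Combined with the observation that $J^{[p^r]}$ admits a minimal pruned resolution whenever $J$ does (the pruning conditions are invariant under taking Frobenius powers of the generators), iterating Theorem \ref{thm_product} across the factors $J_1^{[p^{r_1}]},\dots,J_k^{[p^{r_k}]}$ yields Corollary \ref{pBorel}. This argument also buys more than your proposed comparison could: it applies whenever each $J_i$ admits a minimal pruned resolution, not only in the principal Borel case actually treated by Batzies--Welker and J\"ollenbeck--Welker.
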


This last case is a consequence of a more general result we prove in Theorem \ref{thm_product} (see also Theorem \ref{thm_product_2}). In Proposition \ref{prop_GHP} we also consider the case of ${\bf a}$-stable ideals introduced by Gasharov, Hibi and Peeva \cite{GHPeeva}.

\vskip 2mm

The upshot of our method is that it allows us to unify several classical constructions of free resolutions within the same framework of pruned resolutions. 

\begin{corollaryx}
There is a minimal pruned resolution isomorphic to:
\begin{itemize}
    \item The Eliahou-Kervaire resolution \cite{EK}. (Corollary \ref{CorEK})
    \item The Herzog-Takayama resolution \cite{HerzogTakayama}. (Corollary \ref{CorHT})
\end{itemize}
\end{corollaryx}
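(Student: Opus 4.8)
The plan is to deduce both isomorphisms from the minimality of the pruned resolution established in the preceding results, and then to upgrade the abstract uniqueness of minimal free resolutions to an explicit identification of bases and differentials, which is what gives the statement its unifying content.

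For the Eliahou--Kervaire case, I would work with a stable ideal $I$ and fix the order on its minimal generators induced by the Borel structure (ordering first by the largest index $\max(u)$ of a variable dividing $u$, then lexicographically). Running the pruning algorithm of Section \ref{Sec2} with respect to this order produces a pruned resolution which is minimal: by Theorem \ref{thm1} a stable ideal is vertex splittable, so Theorem \ref{thm2} applies; alternatively, stable ideals have linear quotients, so minimality also follows from Theorem \ref{thm3}. Since minimal free resolutions of a fixed module are unique up to isomorphism, the pruned resolution is already abstractly isomorphic to the Eliahou--Kervaire resolution. The substantive step is to exhibit a canonical isomorphism: I would describe the critical cells of the acyclic matching underlying the prune and show they are indexed precisely by the Eliahou--Kervaire symbols, i.e.\ pairs $(u,\sigma)$ with $u$ a minimal generator and $\sigma\subseteq\{1,\dots,\max(u)-1\}$, with $|\sigma|$ recording the homological degree. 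This amounts to checking that, for the chosen order, a face of the Taylor complex survives the matching exactly when its combinatorial data encodes an admissible Eliahou--Kervaire pair.

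The Herzog--Takayama case follows the same template starting from a linear-quotient order on the generators. Theorem \ref{thm3} gives minimality of the pruned resolution directly, so again it is abstractly isomorphic to the Herzog--Takayama resolution, and the work lies in matching critical cells with the Herzog--Takayama basis, which is indexed by each generator $u$ together with a subset of the set of variables $\operatorname{set}(u)$ determined by its decomposition function.

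The main obstacle in both cases is not the existence of an isomorphism, which is automatic from minimality together with uniqueness, but verifying that the differentials agree under the combinatorial bijection. The pruned differential is assembled as a signed sum over gradient paths of the Morse matching, whereas the Eliahou--Kervaire and Herzog--Takayama differentials are prescribed by explicit closed formulas; reconciling the two requires tracking the signs that arise from the orientations of faces along gradient paths and ruling out spurious contributions. I expect that for these particular orders each relevant gradient path is unique, so that the pruned differential collapses to a single term that matches the classical coefficient, and this uniqueness is the technical heart of the argument.
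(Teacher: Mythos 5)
Your proposal is correct and takes essentially the same route as the paper: the corollaries are derived exactly as you do, from Theorem \ref{thm1} plus Theorem \ref{thm2} (for Eliahou--Kervaire) and from Theorem \ref{thm3} (for Herzog--Takayama), with the isomorphism coming from the abstract uniqueness of minimal free resolutions of a fixed module. The extra program you call the ``substantive step'' --- indexing the critical cells by Eliahou--Kervaire symbols or the Herzog--Takayama basis and reconciling the Morse differential with the classical closed formulas --- is neither carried out in the paper nor needed for the statement, since ``isomorphic'' here means isomorphic as complexes, which minimality already guarantees; so the part of your argument you left conjectural (uniqueness of gradient paths) is not a gap in a proof of this corollary, only of a stronger, explicitly combinatorial identification.
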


Moreover, there are cases where we can still apply the same techniques to determine the minimality of the pruned resolution of a monomial ideal $I$ by reducing it to the minimality of a smaller ideal $J$.

\begin{theoremx} (Theorem \ref{thm_a})
Assume
$I=J+(x_1^{a_1},\dots, x_n^{a_n})$ with $a_i \in \mathbb{Z}_{> 0}$. %larger than the degree of $x_i$ in any generator of $J$. 
Then, $I$ admits a minimal pruned resolution if and only if $J$ admits a minimal pruned resolution.
    
\end{theoremx}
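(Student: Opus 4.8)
The plan is to compare the pruning matchings on the Taylor complexes of $I$ and of $J$ directly, exploiting that the added generators $x_1^{a_1},\dots,x_n^{a_n}$ are pairwise coprime. Set $K=(x_1^{a_1},\dots,x_n^{a_n})$, so that $I=J+K$, and fix an order of the generators of $I$ in which the surviving minimal generators of $J$ precede the pure powers. The structural feature I would lean on is that for any set $T$ of pure powers one has $\operatorname{lcm}(x_i^{a_i}:i\in T)=\prod_{i\in T}x_i^{a_i}$, so the multidegree $m_\sigma$ of a face $\sigma$ of the Taylor complex of $I$ splits as the lcm of its $J$-part multiplied by a monomial whose support among the $x_i$ with $i\in T$ is controlled purely by $T$. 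I would first reduce to adding a single pure power by induction on the number of pure powers added, so that at each step $I'=J'+(x_k^{a_k})$, and then separate the minimal generators of $J'$ into those divisible by $x_k^{a_k}$ (which become redundant in $I'$) and those that are not.

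Recall that a pruned resolution is minimal exactly when every entry of its differential between critical cells is a non-unit, equivalently when no two critical cells in consecutive homological degrees carry the same multidegree and are joined by a gradient path; since the entry from a critical cell $\sigma$ to a critical cell $\tau$ is homogeneous of multidegree $m_\sigma/m_\tau$, it is a unit precisely when $m_\sigma=m_\tau$. The goal is therefore to understand the critical cells of $I$ and the multidegrees along gradient paths. I would classify these cells according to which pure powers they contain, and prove two facts: (a) the pruning matching of $I$ restricts on the $J$-faces to the pruning matching of $J$, so that the $J$-faces among the critical cells of $I$, together with their Morse differentials, reproduce exactly the pruned resolution of $J$; and (b) every critical cell involving at least one pure power contributes only non-unit entries, because the coprime pure power $x_i^{a_i}$ pins the exponent of $x_i$ in $m_\sigma$ and no gradient path to a neighbour of a different homological degree can keep that exponent unchanged.

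Granting (a) and (b), both implications follow at once. If the pruned resolution of $I$ is minimal then in particular its $J$-block has no unit entries, and by (a) this block is the pruned resolution of $J$, which is thus minimal; conversely, if the pruned resolution of $J$ is minimal then by (a) the $J$-block of $I$ has no unit entries, and by (b) neither does the complementary part, so the pruned resolution of $I$ is minimal. Note that the forward implication is close in spirit to the persistence of minimality for subideals established in Section \ref{Minimal_11}, and one could alternatively deduce it from there once the restriction statement (a) is in place.

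The main obstacle is proving the restriction statement (a) in the presence of redundancy. Adding $x_k^{a_k}$ can turn a non-unit Taylor coefficient into a unit one, because a minimal generator $g$ of $J'$ with $x_k^{a_k}\mid g$ disappears from a minimal generating set of $I'$ and the faces containing it must be rematched; one must check that this rematching is confined to the pure-power block and does not disturb which $J$-faces the pruning algorithm pairs. I would handle this by the single-step induction above, analyzing the two sub-cases $x_k^{a_k}\in J'$ (where $I'=J'$ and there is nothing to prove) and $x_k^{a_k}\notin J'$, and in the latter case tracking the greedy choices of the pruning algorithm to show that the matched pairs among faces supported on the generators of $J'$ are unchanged, while the new faces through $x_k^{a_k}$ organize into the minimal Koszul-type syzygies predicted by the coprimeness of the pure powers. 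Verifying that the greedy pruning order is insensitive to the newly created unit coefficients is the delicate point on which the whole argument rests.
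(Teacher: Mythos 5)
Your restriction claim (a) is correct and, together with your remark that the forward implication follows from the persistence results of Section \ref{Minimal_11}, it reproduces the paper's entire proof: the paper only proves the implication ``$I$ minimal $\Rightarrow$ $J$ minimal'', by iterating Corollary \ref{cor_a} (which rests on Proposition \ref{prop_a}). Under the standing assumption of that subsection that each $a_i$ exceeds the $x_i$-degree of every generator of $J$, no pruning can occur at a pure-power step of Algorithm \ref{alg1}, matched edges never join faces with different pure-power content, and gradient paths between pure-power-free cells stay pure-power-free. For the same reason, the redundancy issue you call ``the delicate point on which the whole argument rests'' is vacuous here: no minimal generator of $J$ is divisible by any $x_k^{a_k}$, so $G(I)=G(J)\sqcup\{x_1^{a_1},\dots,x_n^{a_n}\}$ and no rematching ever occurs.

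The genuine gap is your claim (b), which is false, not merely unjustified. The pinning argument shows only that two critical cells of equal multidegree must contain exactly the same set $T$ of pure powers; it gives no control over pairs of critical cells sharing the same nonempty $T$. For faces whose pure-power content is $T$, the pruning condition $m_j\mid m_\sigma$ at a step $j$ with $m_j\in G(J)$ is equivalent to $\pi_T(m_j)\mid\pi_T(m_\sigma)$, where $\pi_T$ sets $x_i=1$ for $i\in T$; hence on this block Algorithm \ref{alg1} runs exactly as it does on the projected family $(\pi_T(m_1),\dots,\pi_T(m_q))$, and gradient paths between two block-$T$ critical cells never leave the block (descending edges can only lose pure powers, reversed edges preserve them). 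Minimality of the pruned resolution of $I$ therefore forces the pruning of \emph{every} projected family to be free of unit entries, and this does not follow from minimality for $J$. Concretely, let $g_1,\dots,g_6$ be the generators of Example \ref{ex_q_6} and let $J=(y_1g_1,\dots,y_6g_6)\subseteq\KK[x_1,\dots,x_{10},y_1,\dots,y_6]$: the distinct $y$-supports make all lcm's distinct, so nothing is ever pruned and the pruned resolution of $J$ is its Taylor resolution, which is minimal; but for $I=J+(x_1^2,\dots,x_{10}^2,y_1^2,\dots,y_6^2)$ the block $T=\{y_1^2,\dots,y_6^2\}$ reproduces verbatim the pruning of $(g_1,\dots,g_6)$, including the gradient path with coefficient $2$ between the two boxed cells of equal multidegree, i.e.\ a unit entry in characteristic $0$. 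So with your ordering the pruned resolution of $I$ is not minimal although that of $J$ is, and (b) fails. Reordering cannot rescue the scheme either: a Mayer--Vietoris computation gives $\beta_{i,\,y_1^2\cdots y_6^2x^{\gamma}}(R/I)=\beta_{i-6,\,\gamma}\bigl(R/(g_1,\dots,g_6)\bigr)$ for squarefree $\gamma$, so the Betti numbers of this $I$ are characteristic-dependent and no order of $G(I)$ can produce a minimal pruned resolution over $\QQ$. Be aware, finally, that the backward implication is exactly the half for which the paper itself offers no argument, so there is nothing there to fall back on.
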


\begin{theoremx}  (Theorem \ref{thm_b})
Let $I(G)$ be the edge ideal of a graph $G$ and $x_n$ a vertex with no edges between its neighbors.  $I(G)$ admits a minimal pruned resolution if $I(G\setminus \{x_n\})$ and some intermediate edge ideals  of some hypergraphs $I(H_{k_1,\dots, k_m})$  admit a minimal pruned resolution.    
\end{theoremx}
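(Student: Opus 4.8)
The plan is to realize $I(G)$ through a Betti splitting along the vertex $x_n$ and then assemble a pruned resolution by a mapping cone, checking minimality against the additivity of Betti numbers that a Betti splitting provides. Write $N(x_n)=\{x_{k_1},\dots,x_{k_m}\}$ for the neighbors of $x_n$, and set
\[
K=(x_nx_{k_1},\dots,x_nx_{k_m})=x_n\cdot(x_{k_1},\dots,x_{k_m}),\qquad J=I(G\setminus\{x_n\}),
\]
so that $\mathcal{G}(I(G))$ is the disjoint union $\mathcal{G}(J)\sqcup\mathcal{G}(K)$, the generators of $K$ being exactly those divisible by $x_n$. Since $(x_{k_1},\dots,x_{k_m})$ is generated by variables it has linear quotients, hence so does $K$; by Theorem \ref{thm3}, $K$ admits a minimal pruned resolution, namely a shift of the Koszul complex.

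First I would prove that $I(G)=J+K$ is a Betti splitting. This is exactly the point where the hypothesis that $N(x_n)$ spans no edges is used: it guarantees that $x_{k_i}x_{k_j}\notin J$ for all $i,j$, so that the minimal generators of $J\cap K$ are controlled and the connecting maps in the Mayer--Vietoris sequence
\[
0\longrightarrow J\cap K\longrightarrow J\oplus K\longrightarrow I(G)\longrightarrow 0
\]
vanish on $\mathrm{Tor}$. I expect to deduce this from the mild generalization of Bolognini's result \cite{Bolognini} on Betti splittings of componentwise linear ideals announced in the introduction (for the vertex partition determined by $x_n$), or, failing that, by a direct verification that the maps $\mathrm{Tor}_\bullet(J\cap K)\to\mathrm{Tor}_\bullet(J)\oplus\mathrm{Tor}_\bullet(K)$ are zero.

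The second step is to identify the remaining piece $J\cap K$. Since $x_n$ divides no generator of $J$, one checks that
\[
J\cap K=x_n\,\bigl(J\cap(x_{k_1},\dots,x_{k_m})\bigr),
\]
and the monomial ideal $J\cap(x_{k_1},\dots,x_{k_m})$ is, up to the shift by $x_n$, the edge ideal of a hypergraph: its minimal generators are the edges of $G\setminus\{x_n\}$ meeting $N(x_n)$ together with the degree-three monomials $x_{k_\ell}x_ax_b$ coming from edges $\{x_a,x_b\}$ disjoint from $N(x_n)$. Analyzing its pruned resolution through a further iterated splitting over the neighbors $x_{k_1},\dots,x_{k_m}$ produces the intermediate hypergraph edge ideals $I(H_{k_1,\dots,k_m})$ appearing in the statement, whose minimality is assumed as hypothesis.

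Finally I would glue the data together. Using the Betti-splitting machinery of Section \ref{Sec2_2}, a mapping cone of the (minimal pruned) resolutions of $J$, of $K$, and of $J\cap K$ yields a pruned resolution of $I(G)$ whose ranks add up exactly to $\beta_{i,j}(J)+\beta_{i,j}(K)+\beta_{i-1,j}(J\cap K)$; by the Betti splitting this equals $\beta_{i,j}(I(G))$, forcing minimality, and the persistence results of Section \ref{Minimal_11} let me propagate minimality through the iteration over the neighbors. The main obstacle is the second and third steps: correctly pinning down the minimal generators and the pruned Morse matching of the intersection ideal $J\cap K$ as a hypergraph edge ideal, and confirming that the independence of $N(x_n)$ is precisely what makes the splitting a Betti splitting so that the mapping cone stays minimal. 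The Betti-number bookkeeping of the last step is then routine.
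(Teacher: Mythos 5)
Your proposal takes essentially the same route as the paper: split off the generators divisible by $x_n$, identify the intersection ideal as $x_n$ times an ideal generated by the edges meeting $N(x_n)$ together with the degree-three monomials coming from the remaining edges, decompose that intersection by a further iterated splitting over the neighbors so that the hypergraph ideals $I(H_{k_1,\dots,k_m})$ appear, and let the additivity of Betti numbers across Betti splittings force minimality of the resulting pruned resolution (the paper assembles this via the recursive use of Theorem \ref{C2} rather than an explicit mapping cone, but it is the same machinery). The one point you should correct is \emph{where} the hypothesis that $N(x_n)$ is independent gets used. The first-level $x_n$-partition is a Betti splitting for \emph{any} edge ideal, independence or not: $x_i$-partitions of edge ideals are always Betti splittings by \cite{FTVT2}, and alternatively your $K=x_n(x_{k_1},\dots,x_{k_m})$ has a linear resolution, so Theorem \ref{T} applies directly. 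What the independence hypothesis actually buys is the \emph{iterated} part: it guarantees that each successive decomposition of the intersection ideal over the neighbors $x_{k_1},\dots,x_{k_m}$ is again a Betti splitting, and that each iterated intersection is a monomial multiple of a hypergraph edge ideal $I(H_{k_1,\dots,k_m})$, hence admits a minimal pruned resolution by Lemma \ref{L1} whenever $I(H_{k_1,\dots,k_m})$ does (the paper defers the details of this iteration to \cite{AlvJof1}). Since your plan spends the hypothesis at step one, it leaves unexamined the step where it is genuinely needed, namely the Betti-splitting verification at every stage of the iteration; once you move that check to the right place, your argument coincides with the paper's proof.
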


As a consequence we prove that paths, trees, forests, cycles, wheels, and complete bipartite graphs admit minimal pruned resolution (Corollaries \ref{Ctree}, \ref{Ccycle}, \ref{Cwheel}, \ref{Cbipartite}). %The technique we develop opens the door to treating other classes of edge ideals of graphs.

\section{Free resolutions of monomial ideals}

Let $R=\KK[x_1, \ldots, x_n]$ be the polynomial ring in $n$ variables with coefficients
in a field $\KK$. Let $I\subseteq R$ be a monomial ideal, that is an ideal generated
by monomials ${\bf x}^\alpha:=x_1^{\alpha_1}\cdots x_n^{\alpha_n}$, where $\alpha \in \ZZ_{\geq 0}^n$.
Throughout this work, we denote $|\alpha|= {\alpha_1}+\cdots +{\alpha_n}$ and $\varepsilon_1,\dots,\varepsilon_n$
will be the natural basis of $\ZZ^n$. Given a minimal set of generators $G(I):=\{m_1,\dots,m_q\}$ of $I$,
we will consider  the monomials $m_{\sigma}:={\rm lcm}(m_i \hskip 2mm | \hskip 2mm \sigma_i=1)$ for any $\sigma \in \{0,1\}^q$. Sometimes we will denote these monomials  $m_{i_1,\dots, i_s }:={\rm lcm}(m_{i_1}, \dots, m_{i_s})$ with $1\leq i_1 < \cdots < i_s \leq q$ to emphasize those $m_i$'s showing up in the expression. In this case we will also denote $\sigma=\{i_1,\dots, i_s\}$.
\vskip 2mm

A $\ZZ^n$-graded free resolution of $R/I$ is an exact sequence of free $\ZZ^n$-graded
modules:
\begin{equation}\label{resolution of I}
\mathbb{F}_{\bullet}: \hskip 3mm \xymatrix{ 0 \ar[r]& F_{p}
\ar[r]^{d_{p}}& \cdots \ar[r]& F_1 \ar[r]^{d_1}& F_{0} \ar[r]& R/I
\ar[r]& 0},
\end{equation}
where the $i$-th term is of the form
$$F_i =\bigoplus_{\alpha \in \ZZ^n} R(-\alpha)^{\beta_{i,\alpha}}\,.$$
We say that $\mathbb{F}_{\bullet}$ is minimal if the matrices of the
homogeneous morphisms $d_i: F_i\longrightarrow F_{i-1}$ do not contain
invertible elements. In this case, the exponents $\beta_{i,\alpha}$ form a
set of invariants of $R/I$ known as its {\it multigraded Betti numbers}.
Throughout this work, we will mainly consider the coarser $\ZZ$-graded free resolution. In this case, we
will encode the $\ZZ$-graded Betti numbers in the  {\it Betti diagram} of $R/I$ where the entry
on the $i$th row and $j$th column of the table is $ \beta_{i,i+j}$, that is:

\begin{center}
 $\begin{matrix}
&0&1&2&\cdots \\ \text{total:}& \text{.} &\text{.}&\text{.}& \\
\text{0:} & \beta_{0,0} & \beta_{1,1} & \beta_{2,2} & \cdots \\\text{1:}& \beta_{0,1} & \beta_{1,2} & \beta_{2,3} & \cdots \\
\vdots& \vdots & \vdots & \vdots \\\end{matrix}
$
\end{center}
We have $ \beta_{0,0}=1$ and $ \beta_{i,i}=0$ for all $i>0$. Moreover, each row of the Betti table encodes the Betti numbers of each {\it linear strand} of $\mathbb{F}_{\bullet}$. We say that $I$ has a {\it linear resolution} if $\mathbb{F}_{\bullet}$ only has one linear strand (not counting the first row).

\subsection{Cellular resolutions}

A CW-complex $X$ is a topological space obtained by attaching cells of increasing dimensions
to a discrete %\phil{Es suficiente definir los 'finite CW-complex' no? En nuestro contexto es suficiente}
set of points $X^{(0)}$.  Let $X^{(i)}$ denote the set of $i$-cells of  $X$ and
consider the set of all cells $X^{(\ast)}:=\bigcup_{i\geq 0} X^{(i)}$. Then, we can view $X^{(\ast)}$
as a poset with the partial order  given by $\sigma' \leq \sigma$ if and only if $\sigma'$
is contained in the closure of $\sigma$. We can also give a $\ZZ^n$-graded structure $X$ by means of an
order-preserving map $gr: X^{(\ast)} \longrightarrow \ZZ_{\geq 0}^n$.

%In the sequel we will denote $X_{\leq \alpha}$ the sub-CW-complex of $X$ consisting of the cells $\sigma \in X^{(\ast)}$
%such that $gr(\sigma)\leq \alpha$, for any $\alpha \in \bZ_{\geq 0}^n$.

\vskip 2mm

We say that the free resolution (\ref{resolution of I}) is {\it cellular} (or is a {\it CW-resolution})
if there exists a $\ZZ^n$-graded CW-complex $(X, gr)$ such that, for all $i\geq 1$:

\begin{itemize}
 \item[$\cdot$]
 there exists a basis $\{e_\sigma\}$ of $F_i$ indexed by the $(i-1)$-cells of $X$, such that if
 $e_\sigma \in R(-\alpha)^{\beta_{i,\alpha}}$ then  $gr(\sigma)=\alpha$, and

 \item[$\cdot$]
The differential $d_i: F_i\longrightarrow F_{i-1}$ is given
%\phil{for any $e_\sigma \in X^{(i)}$. OJO: $\sigma \in X^{(i)}$ no? Quitado y a\~nadido en la formula siguiente}
by
 $$e_\sigma \hskip 2mm \mapsto \sum_{\sigma \geq \sigma' \in X^{(i-1)}} [\sigma: \sigma'] \hskip 1mm {\bf x}^{gr(\sigma) - gr(\sigma') } \hskip 1mm e_{\sigma'}\ ,\quad \forall \sigma \in X^{(i)}$$
where $[\sigma: \sigma']$ denotes the coefficient of $\sigma'$ in the image of $\sigma$ by the differential map in the cellular homology
of $X$. We have  $[\sigma: \sigma']=\pm 1$ when $(X, gr)$ is a {\it regular} CW-complex.
\end{itemize}
In the sequel, whenever we want to emphasize such a cellular structure, we will denote the free resolution as
$\mathbb{F}_{\bullet}=\mathbb{F}_{\bullet}^{(X,gr)}$. If $X$ is a simplicial complex, we say
that the free resolution is {\it simplicial}. 

\vskip 2mm
We mention here some examples of cellular resolutions that will be relevant in our work. We start with some classical simplicial resolutions which are not minimal in general. 

\vskip 2mm

$\cdot$ To define the {\it Taylor resolution  \cite{Taylor}}  $\mathbb{F}_{\bullet}^{(X_{\tt Taylor}, gr)}$ we consider the full simplicial complex on $r$ vertices, $X_{\tt Taylor}$, whose faces are labelled by
$\sigma \in \{0,1\}^r$ or, equivalently, by the corresponding monomials $m_\sigma$. We have a natural
$\ZZ^n$-grading on $X_{\tt Taylor}$ by assigning $gr(\sigma)=\alpha \in \ZZ^n$ where ${\bf x}^\alpha=m_\sigma$.
%The {\it Taylor resolution} is the simplicial resolution $\mathbb{F}_{\bullet}^{(X_{\tt Taylor}, gr)}$.

\vskip 2mm

$\cdot$ The {\it Lyubeznik resolution \cite{Lyu88}} $\mathbb{F}_{\bullet}^{(X_{\tt Lyub}, gr)}$ is a refinement of the Taylor resolution supported 
on a simplicial subcomplex $X_{\tt Lyub} \subseteq X_{\tt Taylor}$. %whose faces of dimension $s$ are labelled by those $\sigma=\varepsilon_{i_0}+ \cdots + \varepsilon_{i_s} \in \{0,1\}^r$
%such that, for all $t<s$ %\phil{mejor que $i_t<i_s$ no?}
%and all $j <i_t$ %\phil{mejor que $m_j < m_{i_t}$ no?},
%$$ m_j \not | \hskip 2mm {\rm lcm}(m_{i_t},\dots , m_{i_s}).$$
%The {\it Lyubeznik resolution} is the simplicial resolution $\mathbb{F}_{\bullet}^{(X_{\tt Lyub}, gr)}$.
We point out that it depends on the order of the generators of the ideal $I$.

\vskip 2mm
The following classical free resolutions are minimal for certain classes of monomial ideals. These resolutions were constructed prior to the introduction of cellular resolutions, but it was later shown that they belong to this class.

\vskip 2mm

$\cdot$  Eliahou and Kervaire   \cite{EK}  introduced the notion of {\it stable ideals} (see Definition \ref{Def_ideals}) and constructed an explicit minimal free resolution for this class of monomial ideals.
Mermin \cite{Mermin} demonstrated that it admits a regular cellular structure (see also  \cite{Sinefako}, \cite{OkazakiYanagawa}).

\vskip 2mm

Charambalous and Evans \cite{ChaEv} observed that the {\it Eliahou-Kervaire resolution} can be constructed as iterated mapping cones. This approach involves considering the short exact sequences arising from adding one generator of $I$
 at a time and iteratively constructing the resolution as a mapping cone of an appropriate map between complexes that were previously constructed.

\vskip 2mm

$\cdot$  Herzog and Takayama  \cite{HerzogTakayama} built upon this mapping cone procedure for constructing a free resolution of a monomial ideal. They proved that it provides a minimal free resolution for {\it linear quotient ideals} with respect to some ordering of its generators (see Definition \ref{Def_ideals}). They even describe explicitly the differentials of the resolution under some technical extra assumption \cite[Theorem 1.12]{HerzogTakayama}. 
Dochtermann and Mohammadi \cite{DochFat} and independently Goodarzi \cite{Goodarzi} proved that the {\it Herzog-Takayama resolution} admits a regular cellular structure.

\subsection{Discrete Morse theory}

Batzies and Welker \cite{BatziesWelker} (see also \cite{Welker_survey}) studied cellular resolutions using {\it discrete Morse theory} introduced by
Forman  \cite{Forman} and reformulated in terms of acyclic matchings by Chari in \cite{Char}. We briefly recall this approach but encourage the interested reader to take a look at the original sources for more insight.

\vskip 2mm

%This is also our approach in this work. Let's start recalling from \cite{BW} the preliminaries on discrete Morse theory.
Let  $G_X$ be the directed graph were the vertices are the cells of a regular $\bZ^n$-graded CW-complex $(X,gr)$ and the edges are given by
$$E_X=\{ \sigma \lra \sigma' \hskip 2mm | \hskip 2mm  \sigma' \leq \sigma, \hskip 2mm   \dim \sigma' = \dim \sigma -1 \}.$$
For a given set of edges $\cA \subseteq E_X$, denote by $G_X^{\cA}$ the graph obtained by reversing the direction
of the edges in $\cA$, i.e., the directed graph with edges\footnote{For the sake of clarity, the arrows that we reverse will be denoted
by $\Longrightarrow$.}
$$E_X^{\cA}= (E_X \setminus \cA) \cup \{ \sigma' \Longrightarrow \sigma \hskip 2mm | \hskip 2mm  \sigma \lra \sigma' \in \cA \}.$$
%
%\vskip 2mm
%
When each cell of $X$ occurs in at most one edge of $\cA$, we say that $\cA$ is a {\it matching} on $X$.
A matching $\cA$ is {\it acyclic} if the associated graph $G_X^{\cA}$ is acyclic, i.e., does not
contain any directed cycle.
Given an acyclic matching  $\cA$ on $X$, the $\cA$-{\it critical cells} of $X$ are the cells of $X$ that are
not contained in any edge of $\cA$.
Finally, an acyclic matching $\cA$ is {\it homogeneous} whenever $gr(\sigma)=gr(\sigma')$
for any edge $\sigma \lra \sigma' \in \cA$.
%
%\vskip 2mm
%
%The $\cA$-{\it critical cells} of $X$ are those cells of $X$ not contained in any edge of the acyclic matching $\cA$.

\begin{proposition}[{\cite[Proposition 1.2]{BatziesWelker}}]
Let $(X,gr)$ be a regular $\bZ^n$-graded CW-complex and $\cA$ a homogeneous acyclic matching. Then, there is a
{\rm(}not necessarily regular{\rm)} CW-complex $X_{\cA}$
whose $i$-cells are in one-to-one correspondence with the $\cA$-critical $i$-cells of $X$, such that $X_{\cA}$ is
homotopically equivalent to $X$, and that inherits the $\bZ^n$-graded structure.
\end{proposition}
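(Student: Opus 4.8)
The statement is the translation into the language of acyclic matchings of Forman's main theorem of discrete Morse theory, so the plan is to build the Morse function attached to $\cA$, run Forman's collapsing argument, and keep track of the grading throughout. Since the complexes arising from monomial ideals (the Taylor complex and its subcomplexes) have finitely many cells, I would first reduce to the case where $X$ is finite, which sidesteps the direct-limit subtleties of the general statement. The first concrete step is to invoke the dictionary of Chari \cite{Char} between acyclic matchings and discrete Morse functions: the acyclicity of $G_X^{\cA}$ is precisely the condition needed to produce a discrete Morse function $f\colon X^{(\ast)}\lra \RR$ whose gradient vector field is $\cA$ and whose critical cells are exactly the $\cA$-critical cells. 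Concretely one chooses a linear extension of the partial order on cells induced by the directed graph $G_X^{\cA}$ and uses it to assign values so that, for each cell $\sigma$, there is at most one facet $\sigma'<\sigma$ with $f(\sigma')\geq f(\sigma)$ and at most one cofacet with the dual property, the exceptions being exactly the matched pairs; acyclicity guarantees that this assignment is consistent.

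With $f$ in hand I would carry out the standard sublevel-set induction. Order the critical values $c_1<c_2<\cdots<c_N$ and set $X(c):=\bigcup_{f(\sigma)\leq c}\sigma$. The two facts to establish are: (i) if the interval $(c_{j-1},c_j)$ contains no critical value, then $X(c_j^-)$ collapses onto $X(c_{j-1})$; and (ii) crossing $c_j$, the complex $X(c_j)$ is homotopy equivalent to $X(c_j^-)$ with one $p$-cell attached for each critical $p$-cell at level $c_j$. Iterating (i) and (ii) from the bottom up produces a CW-complex $X_{\cA}$, homotopy equivalent to $X$, whose $i$-cells are in bijection with the $\cA$-critical $i$-cells and whose attaching maps are read off from the alternating gradient paths of $\cA$ joining a critical cell to the critical cells in its boundary.

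It remains to equip $X_{\cA}$ with an inherited $\bZ^n$-grading, and here the homogeneity of $\cA$ does the work. I would assign to each critical cell its grade $gr(\sigma)$ in $X$ and check that this is order-preserving on the face poset of $X_{\cA}$. The point is that grade is non-increasing along any directed path in $G_X^{\cA}$: a non-matched down-edge $\sigma\lra\sigma'$ has $gr(\sigma')\leq gr(\sigma)$ because $gr$ is order-preserving, while a reversed matched edge $\sigma'\Longrightarrow\sigma$ has $gr(\sigma')=gr(\sigma)$ by homogeneity. Since the incidences of $X_{\cA}$ are governed by gradient paths descending from a facet of a critical cell, every critical cell $\tau$ appearing in the boundary of a critical cell $\sigma$ satisfies $gr(\tau)\leq gr(\sigma)$, which is exactly the order-preservation required of an inherited grading.

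The main obstacle is step (i): proving that inside a critical-value-free interval the matched cells really can be collapsed away, i.e.\ that acyclicity of $\cA$ is exactly what allows the matched facet–cofacet pairs to be removed through a sequence of elementary collapses. This is the technical heart of Forman's theorem. It rests on using a linear extension compatible with $G_X^{\cA}$ to peel off the pairs so that the lower cell of each pair is a free face at the moment it is removed, and it uses the regularity of $X$ in two places: to make the elementary collapses legitimate and to guarantee that the incidence numbers $[\sigma:\sigma']$ remain $\pm 1$, so that the differentials of the resulting cellular resolution are well defined.
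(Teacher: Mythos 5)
The paper does not actually prove this statement: it is quoted verbatim from Batzies--Welker \cite[Proposition 1.2]{BatziesWelker} and used as a black box, so there is no internal proof to compare against. Your sketch is essentially the original argument of the cited source: Chari's correspondence between acyclic matchings and discrete Morse functions, Forman's sublevel-set collapsing to build the Morse complex $X_{\cA}$, and the observation that homogeneity of $\cA$ makes grades non-increasing along gradient paths, which is exactly what lets $X_{\cA}$ inherit the $\bZ^n$-grading; your preliminary reduction to finite complexes is also harmless in this paper's setting, since all complexes arising here (Taylor complexes and their subcomplexes) are finite. The sketch is correct in outline, with the one caveat you already flag yourself: the collapsing step (i) is the genuine technical content, and a complete write-up would have to reproduce Forman's proof rather than cite it as a plan.
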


In the theory of cellular resolutions, we have:

\begin{theorem}[{\cite[Theorem 1.3]{BatziesWelker}}] \label{Morse_resolution}
Let $I\subseteq R=\KK[x_1,\dots, x_n]$ be a monomial ideal. Assume that $(X,gr)$ is a regular $\bZ^n$-graded
 CW-complex that defines a cellular resolution $\mathbb{F}_{\bullet}^{(X,gr)}$ of $R/I$. Then,  for
a homogeneous acyclic  matching $\cA$ on $G_X$, the $\bZ^n$-graded CW-complex $(X_{\cA},gr)$ supports a cellular
resolution $\mathbb{F}_{\bullet}^{(X_{\cA},gr)}$ of $R/I$. 
\end{theorem}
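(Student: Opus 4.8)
The plan is to reduce the statement to the Bayer--Peeva--Sturmfels acyclicity criterion and to feed that criterion the homotopy equivalence already produced by \cite[Proposition 1.2]{BatziesWelker}, applied not to $X$ itself but to each of its degree-truncations. Recall the criterion \cite{BayerPeevaSturmfels}: a regular $\bZ^n$-graded CW-complex $(Y,gr)$ whose vertices are labelled by a generating set of $I$ supports a cellular resolution of $R/I$ if and only if, for every $\alpha\in\bZ^n_{\geq 0}$, the subcomplex $Y_{\leq\alpha}:=\{\sigma\in Y^{(\ast)}\mid gr(\sigma)\leq\alpha\}$ (with $\leq$ componentwise) is $\KK$-acyclic, i.e.\ has vanishing reduced homology. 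Since $gr$ is order-preserving, $Y_{\leq\alpha}$ is a genuine subcomplex. As we are given that $(X,gr)$ supports a resolution of $R/I$, the criterion yields that $X_{\leq\alpha}$ is acyclic for all $\alpha$; the whole task is to transfer this acyclicity to $X_{\cA}$.

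First I would record the consequence of homogeneity that pins down the resolved module. Because every edge $\sigma\lra\sigma'\in\cA$ satisfies $gr(\sigma)=gr(\sigma')$, no vertex of $X$ can be matched: a vertex $v$ could only be paired with an incident edge $\{v,w\}$, but $gr(\{v,w\})$ encodes $\mathrm{lcm}(m_v,m_w)$, a proper multiple of $m_v$ since distinct minimal generators do not divide one another. Hence every vertex is $\cA$-critical, the augmentation term $F_0=R$ is untouched, and the vertex labels of $X_{\cA}$ still generate exactly $I$. Thus, once $\mathbb{F}_{\bullet}^{(X_{\cA},gr)}$ is known to be acyclic in positive homological degrees, it automatically resolves $R/I$ and no change of target can occur.

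Next, fix $\alpha$ and restrict the matching. \emph{Homogeneity} again forces both endpoints of any edge of $\cA$ to lie simultaneously inside or outside $X_{\leq\alpha}$, so $\cA_{\leq\alpha}:=\{\sigma\lra\sigma'\in\cA\mid gr(\sigma)\leq\alpha\}$ is a matching on $X_{\leq\alpha}$; it is acyclic because its modified graph is a subgraph of the acyclic $G_X^{\cA}$. Applying \cite[Proposition 1.2]{BatziesWelker} to $(X_{\leq\alpha},gr)$ with $\cA_{\leq\alpha}$ gives a CW-complex $(X_{\leq\alpha})_{\cA_{\leq\alpha}}$ homotopy equivalent to $X_{\leq\alpha}$, hence $\KK$-acyclic. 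The crux is then the identification $(X_{\leq\alpha})_{\cA_{\leq\alpha}}=(X_{\cA})_{\leq\alpha}$ as graded CW-complexes. On the level of cells this is immediate from homogeneity: a cell of $X_{\leq\alpha}$ is $\cA_{\leq\alpha}$-critical precisely when it is $\cA$-critical (its potential partner shares its grade, so it too lies in $X_{\leq\alpha}$), so both complexes have the same cells, namely the $\cA$-critical cells of grade $\leq\alpha$, carrying the inherited grading.

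The step I expect to be the genuine obstacle is matching the attaching data, i.e.\ showing that the CW-structures of $(X_{\leq\alpha})_{\cA_{\leq\alpha}}$ and $(X_{\cA})_{\leq\alpha}$ coincide, not merely their cell sets. Here one must unwind the explicit Morse-theoretic construction of the incidence numbers in \cite[Proposition 1.2]{BatziesWelker} (and \cite{Forman}): the attaching map of a critical $i$-cell $c$ is assembled from gradient paths starting at the facets of $c$ and ending at critical $(i-1)$-cells. Along any gradient path the grading is non-increasing, since it drops along the dimension-decreasing edges of $E_X\setminus\cA$ and is preserved along the reversed matched edges (these being homogeneous); as a facet of $c$ has grade $\leq gr(c)\leq\alpha$, every cell occurring in the gluing of $c$ already lives in $X_{\leq\alpha}$. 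Consequently the part of $X_{\cA}$ of grade $\leq\alpha$ is built entirely out of $X_{\leq\alpha}$ and agrees with $(X_{\leq\alpha})_{\cA_{\leq\alpha}}$. Therefore $(X_{\cA})_{\leq\alpha}\simeq X_{\leq\alpha}$ is acyclic for every $\alpha$, and the Bayer--Peeva--Sturmfels criterion concludes that $(X_{\cA},gr)$ supports a cellular resolution $\mathbb{F}_{\bullet}^{(X_{\cA},gr)}$ of $R/I$.
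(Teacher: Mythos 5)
The paper contains no proof of this statement: it is quoted verbatim from Batzies--Welker \cite[Theorem 1.3]{BatziesWelker}, so the only meaningful comparison is with that source, and your argument is correct and essentially reconstructs the proof given there --- homogeneity lets $\cA$ restrict to an acyclic matching on each $X_{\leq\alpha}$, the homotopy equivalence of \cite[Proposition 1.2]{BatziesWelker} together with grade-monotonicity of gradient paths identifies $(X_{\cA})_{\leq\alpha}$ with $(X_{\leq\alpha})_{\cA_{\leq\alpha}}$, and the Bayer--Peeva--Sturmfels acyclicity criterion concludes. The one imprecision is your claim that no vertex can ever be matched, which silently assumes the vertex labels are \emph{minimal} generators (the theorem only assumes they generate $I$); this is harmless, since if a vertex $v$ were matched to an edge $e$ with $gr(v)=gr(e)$, then the label of the other endpoint of $e$ divides ${\bf x}^{gr(v)}$, so the surviving vertex labels still generate exactly $I$ and the resolved module is unchanged.
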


\vskip 2mm

We will refer to a resolution obtained in this way as a {\it Morse resolution}. The differentials of the Morse resolution
$\mathbb{F}_{\bullet}^{(X_{\cA},gr)}$ can be explicitly described
in terms of the  differentials of $\mathbb{F}_{\bullet}^{(X,gr)}$
(see \cite[Lemma 7.7]{BatziesWelker}). Namely, given an edge $\sigma \longrightarrow \sigma'$ in $E_X$ we set
$$m(\sigma, \sigma')=\begin{cases}
-\frac{1}{[\sigma : \sigma']}, & \textrm{if} \; \sigma \longrightarrow \sigma' \in \cA,\\
[\sigma : \sigma'], & \textrm{otherwise}.
\end{cases}$$
%$m(\sigma, \sigma')= -\frac{1}{[\sigma : \sigma']} $ if $\sigma \longrightarrow \sigma' \in \cA$  and $m(\sigma, \sigma')= [\sigma : \sigma'] $ otherwise.  
For a path $\cP : \sigma_1 \longrightarrow \sigma_2 \longrightarrow \cdots \longrightarrow \sigma_{t-1} \longrightarrow \sigma_t$ in $G_X^{\cA}$  we set 
$$m(\cP)= m(\sigma_1, \sigma_2) \cdots m(\sigma_{t-1}, \sigma_t).$$ Then, the differential $d_i^{\cA}: F_i^{(X_{\cA},gr)}\longrightarrow F_{i-1}^{(X_{\cA},gr)}$ is given,  $\forall \sigma \in X_{\cA}^{(i)}$, 
by:

\begin{equation} \label{differential_Morse}
e_\sigma \hskip 2mm \mapsto \sum_{\sigma \geq \sigma' \in X_{\cA}^{(i-1)}} [\sigma: \sigma'] \hskip 1mm \sum_{\sigma'' \in X_{\cA}^{(i-1)}} \hskip 1mm  \sum_{\cP: \sigma' \longrightarrow \sigma'' \\ \textrm{path}} \hskip 1mm m(\cP) \hskip 1mm  {\bf x}^{gr(\sigma) - gr(\sigma'') } \hskip 1mm e_{\sigma''}\ .
\end{equation}

\vskip 2mm

We point out  that the CW-complex $(X_{\cA},gr)$ that we obtain for a given homogeneous acyclic matching $\cA$
is not necessarily regular. Therefore, we cannot always iterate the procedure.
 Sk\"oldberg \cite{Skol} and J\"ollenbeck and Welker \cite{JollenWelker} introduced  {\it algebraic discrete Morse theory}
to overcome this issue. The main difference is that they work directly with an initial free resolution $\mathbb{F}_{\bullet}$
without paying attention whether it has or not a cellular structure.

 \vskip 2mm

Given a basis $X=\bigcup_{i\geq 0} X^{(i)}$ of the corresponding free modules $F_i$,
we consider the directed graph $G_X$ on the set of basis elements with the corresponding
set of edges $E_X$. Then,  we may define an acyclic matching $\cA \subseteq E_X$ (see \cite[Definition 2.1]{JollenWelker})
but, in this case, we have to make sure that the coefficient $[\sigma : \sigma']$ in the differential
corresponding to an edge $\sigma \lra \sigma' \in \cA$ is a unit. We consider the $\cA$-critical
basis elements $X_\cA$ and construct a free resolution $\mathbb{F}_{\bullet}^{X_\cA}$ that is
homotopically equivalent to $\mathbb{F}_{\bullet}$ (see \cite[Theorem 2.2]{JollenWelker}).

\vskip 2mm

Most of the Morse resolutions found in the literature start with the Taylor simplicial complex $(X_{\tt Taylor}, gr)$. We collect some of them:

\vskip 2mm
$\cdot$ Batzies and Welker \cite{BatziesWelker} obtained a minimal Morse resolutions for generic monomial ideals as well as stable and linear quotient monomial ideals. They also gave a minimal Morse resolution for powers of the homogeneous maximal ideal.

\vskip 2mm

$\cdot$ J\"ollenbeck \cite{Joll} constructed a Morse resolution of the edge ideal of a graph indexed by the {\it non-broken circuits} of the graph. He also constructed a Morse resolution for monomial ideals satisfying the {\it strong gcd-condition}. 

%, which is a squarefree monomial ideal generated in degree two.  This Morse resolution is indexed by the {\it non broken circuits} of the graph. 

\vskip 2mm

$\cdot$ J\"ollenbeck and Welker \cite{JollenWelker}  described a minimal Morse resolution for principal stable (Borel fixed) monomial ideals. They also give a minimal Morse resolution for a class of $p$-Borel that includes Cohen-Macaulay $p$-Borel fixed ideals. %The acyclic matching they obtain from the  Taylor resolution is rather technical.

\vskip 2mm

$\cdot$ The first author of this paper in collaboration with Fern\'andez-Ramos and Gimenez \cite{AMFG20} developed 
a very simple method to produce acyclic matchings from the Taylor resolution. This method, named {\it pruned resolution}, is explained in Section \ref{Sec2} but, in general, it does not produce minimal resolutions.

\vskip 2mm

$\cdot$ Barile and Macchia \cite{BarileMacchia} produced a minimal Morse resolution for edge ideals of forests. 
Their method has been generalized to any monomial ideal by Chau and Kara \cite{ChauKara}. However the Morse resolutions obtained in this way are not minimal in general.

\vskip 2mm

$\cdot$
Cooper, El Khoury, Faridi, Mayes-Tang, Morey, \c{S}ega and Spiroff \cite{Faridipd1} gave a Morse resolution of powers of ideals with projective dimension at most one. In an unpublished paper, Engstr\"om and Nor\'en \cite{EN-arXiv} gave a Morse resolution of powers of edge ideals of paths.

%\section{Pruned resolutions} \label{Sec 2}

%In this section we will start reviewing the pruned resolution introduced in \cite{AMFG20} and then we will present some variants. We will use the notion of {\it Betti splitting} to give a version of the pruned resolution that suits well for inductive processes as we will show in Section \ref{Sec2_2}. We also present a Morse resolution of powers of squarefree monomial ideals in Section \ref{Sec2_3} that starts  with a simplicial complex introduced in \cite{FaridiSimplicial} instead of the Taylor simplicial complex.

\section{Pruned resolution of monomial ideals} \label{Sec2}

In this section we will review the pruned resolution introduced in \cite{AMFG20}. 
Let $R=\KK[x_1, \ldots, x_n]$ be a polynomial ring  over a field $\KK$.
Let $I=\langle m_1,\dots, m_q \rangle \subseteq R$ be a monomial ideal and consider its Taylor resolution $\mathbb{F}_{\bullet}^{(X_{\tt Taylor}, gr)}$. The \textit{pruned resolution} introduced in \cite{AMFG20} is a cellular free resolution $\mathbb{F}_{\bullet}^{(X_{\cA_P},gr)}$ of $R/I$ supported on a  $\mathbb{Z}^n$-graded CW-complex $(X_{\cA_P},gr)$, where  $\cA_P \subseteq E_{X_{\tt Taylor}}$ is  the set of pruned edges obtained using the following:

\vskip 2mm

\begin{algorithm}\label{alg1} 

\vskip 2mm

{\rm \noindent {\sc Input:} The set of edges $E_{X_{\tt Taylor}}$.

%\noindent {\sc Output:} The set of pruned edges $\cA_P \subseteq E_{X_{\tt Taylor}}$ \phil{por que poner 'OUTPUT' y luego 'RETURN'
%con la misma informacion en todos los algoritmos?}

\vskip 2mm

%{\bf Prune the extra components}

%\vskip 2mm

For $j$ from $1$ to $r$, incrementing by $1$

\vskip 2mm

\begin{itemize}

\item[\textbf{(j)}] ${\it Prune}$ the edge ${\sigma} \lra{\sigma + \varepsilon_j}$ for
all $\sigma \in \{0,1\}^q$ such that $\sigma_j=0$, where `prune'
means remove the edge\footnote{When we remove an edge, we also remove its two vertices and all the edges
passing through these two vertices.}
if it survived after step $(j-1)$ and
$gr(\sigma)=gr(\sigma + \varepsilon_j) $.
% and this edge survived to all the previous steps.
% $(j-1)$ \phil{no quedaria mas claro decir "if it survived at steps $1$ to $(j-1)$"?} and $gr(\sigma)=gr(\sigma + \varepsilon_j) $.

\end{itemize}

\vskip 2mm

\noindent {\sc Return:} The set $\cA_P$ of edges that have been pruned.

}

\end{algorithm}

It is proved in \cite{AMFG20} that $\cA_P$ is a homogeneous acyclic matching, $X_{\tt Taylor}$ and thus, 

\begin{theorem}\label{C1}
Let $I \subseteq R=\KK[x_1, \ldots, x_n]$ be a monomial ideal and
$\cA_P \subseteq E_{X_{\tt Taylor}}$ be the set of pruned edges obtained using Algorithm \ref{alg1}.
Then, the $\bZ^n$-graded CW-complex $(X_{\cA_P},gr)$
supports a cellular free resolution $\mathbb{F}_{\bullet}^{(X_{\cA_P},gr)}$ of $R/I$.
\end{theorem}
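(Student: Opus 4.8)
The plan is to apply the Batzies–Welker machinery (Theorem~\ref{Morse_resolution}) directly. The Taylor complex $(X_{\tt Taylor}, gr)$ is a regular $\bZ^n$-graded CW-complex supporting a cellular resolution of $R/I$, so the only thing that needs verification is that the set $\cA_P$ of edges returned by Algorithm~\ref{alg1} is a \emph{homogeneous acyclic matching} on $G_{X_{\tt Taylor}}$. Once that is established, Theorem~\ref{Morse_resolution} immediately produces the cellular resolution $\mathbb{F}_{\bullet}^{(X_{\cA_P},gr)}$ supported on $(X_{\cA_P}, gr)$, which is exactly the claim. So the proof reduces to checking three properties of $\cA_P$: that it is a matching, that it is homogeneous, and that it is acyclic.

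\emph{Homogeneity} is the easiest and I would dispatch it first: an edge $\sigma \lra \sigma+\varepsilon_j$ is only placed into $\cA_P$ when the pruning step explicitly requires $gr(\sigma)=gr(\sigma+\varepsilon_j)$, which is precisely the homogeneity condition $gr(\sigma)=gr(\sigma')$ on matched edges. For the \emph{matching} property I would argue using the footnoted convention in the algorithm: when an edge is pruned we also delete its two endpoint cells and every other edge through them, so no surviving cell can ever be matched twice. More carefully, I would track the invariant that after step $(j)$ every cell appears in at most one pruned edge; because step $(j)$ only prunes edges in the $\varepsilon_j$-direction among cells that survived the earlier steps, and removes the endpoints upon pruning, a cell matched at stage $j$ is unavailable at all later stages, so each cell lies in at most one edge of $\cA_P$.

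\emph{Acyclicity} is where I expect the real work, and it is the main obstacle. The reversed graph $G_{X_{\tt Taylor}}^{\cA_P}$ has its matched edges $\sigma \lra \sigma+\varepsilon_j$ reversed to point from the lower-dimensional cell up to the higher-dimensional one, and one must rule out directed cycles. The natural strategy is to exploit the variable-by-variable structure of the algorithm: the pruning in step $(j)$ only ever flips edges in the single coordinate direction $\varepsilon_j$, and it does so only for cells whose $j$-th coordinate is $0$ (matched up to those with $j$-th coordinate $1$). I would define a weight or potential function on cells — for instance a lexicographic comparison of the support vector $\sigma$ refined by the step index $j$ at which a matched edge was created — and show that traversing any edge of $G_{X_{\tt Taylor}}^{\cA_P}$ strictly decreases this potential, so no directed cycle can close up. The delicate point is that reversed (upward) edges increase dimension while the unreversed (downward) edges decrease it, so dimension alone is not monotone along a path; the potential must be engineered to be monotone across \emph{both} edge types, and this is exactly the content one must verify for the mixed up/down zig-zag paths that acyclicity forbids.

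Since the statement asserts only the existence of the cellular resolution and not its minimality, I would not engage with the differentials~\eqref{differential_Morse} at all; the entire burden is the combinatorial verification that $\cA_P$ is a homogeneous acyclic matching, after which Theorem~\ref{Morse_resolution} is invoked as a black box. In a write-up I would remark that all three properties are already established in~\cite{AMFG20}, so the proof here is essentially a citation together with the observation that the hypotheses of Theorem~\ref{Morse_resolution} are met; the acyclicity argument sketched above is the one substantive ingredient that the reader should be pointed to in the original source.
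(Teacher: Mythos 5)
Your proposal is correct and follows exactly the paper's route: the paper likewise reduces Theorem \ref{C1} to the fact that $\cA_P$ is a homogeneous acyclic matching, cites \cite{AMFG20} for that verification, and then invokes Theorem \ref{Morse_resolution} (Batzies--Welker) as a black box. Your additional sketches of the matching, homogeneity, and acyclicity checks are consistent with what is done in \cite{AMFG20}, so nothing is missing.
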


The $\ZZ^n$-graded modules appearing in the free resolution are of the form $\bigoplus_{\alpha \in \ZZ^n} R(-\alpha)^{\overline{\beta}_{i,\alpha}}\,$ and we refer to 
${\overline{\beta}_{i,\alpha}}$ as the \textit{pruned Betti numbers} of $I$.

\vskip 2mm

Some remarks are in order:

\begin{remark} The pruned resolution presents the following features:
\begin{itemize}
\item[$i)$] In general it is not minimal. We have ${{\beta}_{i,\alpha}} \leq {\overline{\beta}_{i,\alpha}}$ for all $i$ and $\alpha$.

\item[$ii)$] It depends on the order of the generators of the ideal $I$.

\item[$iii)$] It is not sensitive to the characteristic of the base field. We may find examples where the pruned resolution is not minimal in characteristic zero but it is in some characteristic different from zero.

\item[$iv)$] We can perform partial pruning by considering appropriate subsets of pruned edges $\cA\subseteq \cA_P$. This was already considered in \cite{AMFG20} to obtain the Lyubeznik resolution or a pruned simplicial resolution.

\item[$v)$] Instead of starting with the Taylor resolution or, equivalently, the simplicial complex ${(X_{\tt Taylor}, gr)}$, we may apply the pruning algorithm to a smaller simplicial complex or even a regular CW-complex. This approach will be used in Section .

\end{itemize}

\end{remark}

\begin{remark} Let $I=(m_1,\dots,m_q)$ be a monomial ideal with $q\geq 3$. The construction of the Taylor graph $E_{X_{\text{Taylor}}}$ can be done  iteratively:

%First we construct the graph $E_{X_{\tt Taylor}}$  in an iterative way. We start with the bottom left cube which corresponds to the Taylor graph associated to the ideal  $(m_1,m_2,m_3)$. To consider the graph corresponding to $(m_1,m_2,m_3,m_4)$ we duplicate the previous one with the bottom right cube. Here we omit drawing the  edges in the fourth direction, that is $\emptyset \lra m_4, m_1 \lra m_{1,4}, \dots , m_{1,2,3} \lra m_{1,2,3,4}$, to avoid a messy diagram. We duplicate the previous graph with the upper left and right cubes to construct the Taylor graph associated to $(m_1,m_2,m_3,m_4,m_5)$. We also omit drawing the edges in the fifth direction. 

\begin{tikzpicture}[on top/.style={preaction={draw=white,-,line width=#1}},
    on top/.default=4pt]
 \hskip 3cm  \matrix (m) [
    matrix of nodes,
    column sep=3mm,
    row sep=5mm,
   font=\footnotesize,
  ] {
                 &               &             &             &                 &                 &             &               &             &               &  &   \\
                 &               &             &             & ${ m_{1,2,3,4,5}}$ &                 &             &  &             &  &  &  \\
                 & ${ m_{1,2,3,5}}$ &          & ${ m_{1,2,4,5}}$ & ${ m_{1,3,4,5}}$   & ${ m_{2,3,4,5}}$     &  &   & &  &  &  \\
     ${ m_{1,2,5}}$ & $m_{1,3,5}$     & $m_{2,3,5}$ & ${ m_{1,4,5}}$   & $m_{2,4,5}$     & ${ m_{3,4,5}}$       &   &  &  &               &    &   \\
     $m_{1,5}$   &  $m_{2,5}$    & $m_{3,5}$   &             & $m_{4,5}$       &                 &             &      &             &               &               &   \\
                 & $m_5$         &             &             &                 &                 &             &               &             &               &               &   \\
                 &               &             &             &                 &                 &             &               &             &               &  &   \\
                 &               &             &             & ${ m_{1,2,3,4}}$   &                 &             &  &             &  & &  \\
                 & $m_{1,2,3}$   &             & $m_{1,2,4}$ & $m_{1,3,4}$     & $m_{2,3,4}$     &  &   & &  &   &  \\
     $m_{1,2}$   & $m_{1,3}$     & $m_{2,3}$   & $m_{1,4}$   & $m_{2,4}$       & $m_{3,4}$       &    &     &   &               &     &   \\
     $m_{1}$     &  $m_2$        & $m_3$       &             & $m_4$           &                 &             &        &             &               &               &   \\
                 & $\emptyset$          &             &             &                 &                 &             &               &             &               &               &   \\
  };
  \begin{scope}[
    font=\small,
    inner sep=.25em,
    every node/.style={fill=white},
  ]
  %Primer cubo
  \draw [->] (m-11-1) -- (m-12-2);
  \draw [->] (m-11-3) -- (m-12-2);
  \draw [->] (m-11-2) -- (m-12-2);
  \draw [->] (m-10-2) -- (m-11-1);
  \draw [->] (m-10-1) -- (m-11-1);
  \draw [->] (m-10-2) -- (m-11-3);
  \draw [->] (m-10-3) -- (m-11-3);
  \draw [->] (m-9-2) -- (m-10-2);
  \draw [->] (m-9-2) -- (m-10-1);
  \draw [->] (m-9-2) -- (m-10-3);
  \draw [->, on top] (m-10-1) -- (m-11-2);
  \draw [->, on top] (m-10-3) -- (m-11-2);
  %Segundo
  \draw [->] (m-10-4) -- (m-11-5);
  \draw [->] (m-10-6) -- (m-11-5);
  \draw [->] (m-10-5) -- (m-11-5);
  \draw [->] (m-9-5) -- (m-10-4);
  \draw [->] (m-9-4) -- (m-10-4);
  \draw [->] (m-9-5) -- (m-10-6);
  \draw [->] (m-9-6) -- (m-10-6);
  \draw [->] (m-8-5) -- (m-9-5);
  \draw [->] (m-8-5) -- (m-9-4);
  \draw [->] (m-8-5) -- (m-9-6);
  \draw [->, on top] (m-9-4) -- (m-10-5);
  \draw [->, on top] (m-9-6) -- (m-10-5);
  %Tercero

  %Cuarto

    %quinto cubo
    \draw [->] (m-5-1) -- (m-6-2);
    \draw [->] (m-5-3) -- (m-6-2);
    \draw [->] (m-5-2) -- (m-6-2);
    \draw [->] (m-4-2) -- (m-5-1);
    \draw [->] (m-4-1) -- (m-5-1);
    \draw [->] (m-4-2) -- (m-5-3);
    \draw [->] (m-4-3) -- (m-5-3);
    \draw [->] (m-3-2) -- (m-4-2);
    \draw [->] (m-3-2) -- (m-4-1);
    \draw [->] (m-3-2) -- (m-4-3);
    \draw [->, on top] (m-4-1) -- (m-5-2);
    \draw [->, on top] (m-4-3) -- (m-5-2);
    %sexto
    \draw [->] (m-4-4) -- (m-5-5);
    \draw [->] (m-4-6) -- (m-5-5);
    \draw [->] (m-4-5) -- (m-5-5);
    \draw [->] (m-3-5) -- (m-4-4);
    \draw [->] (m-3-4) -- (m-4-4);
    \draw [->] (m-3-5) -- (m-4-6);
    \draw [->] (m-3-6) -- (m-4-6);
    \draw [->] (m-2-5) -- (m-3-5);
    \draw [->] (m-2-5) -- (m-3-4);
    \draw [->] (m-2-5) -- (m-3-6);
    \draw [->, on top] (m-3-4) -- (m-4-5);
    \draw [->, on top] (m-3-6) -- (m-4-5);
    %septimo
 
    %octavo
  
  \end{scope}
\end{tikzpicture}

%In general,  the Taylor graph $E_{X_{\tt Taylor}}$ associated to any ideal $I=(m_1,\dots,m_q)$ can be obtained using this procedure of   duplicating the previous cubes to the right and then upstairs. We finally get a graph with $2^q -1$ vertices (we do not count $\emptyset$ as a vertex).

$\cdot$ Begin with the \textit{bottom-left cube}, which represents the Taylor graph for  $(m_1, m_2, m_3)$.

\vskip 2mm
$\cdot$
    To incorporate $m_4$, duplicate the existing graph (from the base case) by adding a \textit{bottom-right cube}. 
    The edges in the fourth direction (connecting nodes involving $m_4$) are not drawn in the diagram to avoid clutter. These omitted edges would connect 
    $ \emptyset \longleftarrow m_4, \quad m_1 \longleftarrow m_{1,4}, \quad m_{1,2} \longleftarrow m_{1,2,4}, \quad \dots \quad, \quad m_{1,2,3} \longleftarrow m_{1,2,3,4}.
  $
\vskip 2mm
$\cdot$
     To incorporate $m_5$, duplicate the graph again to form two \textit{upper cubes} (left and right). These represent the Taylor graph for $(m_1, m_2, m_3, m_4, m_5)$. 
    Similar to the fourth direction, edges in the fifth direction are also omitted to simplify the visualization.

\vskip 2mm

The Taylor graph for any monomial ideal $ (m_1, \dots, m_q)$ can be constructed by repeating this process. The key idea is to iteratively duplicate the existing structure and extend in a new direction corresponding to the next monomial, while optionally omitting edges in higher dimensions for clarity. %We finally get a graph with $2^q -1$ vertices (we do not count $\emptyset$ as a vertex).

\vskip 2mm

The first three directions of the pruning Algorithm \ref{alg1} are $( \nwarrow )$, $( \uparrow )$, and $( \nearrow )$, respectively. Following the iterative method to represent the graph, we prune edges in the corresponding cubes first to the \textit{right}, then \textit{upstairs}, and so on. This approach has a significant advantage from a computational perspective. Specifically, if we prune an edge $ \sigma \longrightarrow \sigma + \varepsilon_j $, we also prune all the edges of the form:
    \[
    \sigma + \varepsilon_k \longrightarrow \sigma + \varepsilon_j + \varepsilon_k, \quad \text{where } k > j.
    \]
Consequently, during the process of duplicating cubes, we only need to duplicate the edges (and their corresponding vertices) that have not been pruned in previous steps.

%The first three directions of the pruning algorithm are $\nwarrow$, $\uparrow$ and $\nearrow$ respectively, and then, following the iterative method to represent the graph we prune edges in the corresponding cubes to the right and then upstairs and so on. This approach also has a nice consequence  from a computational point of view. If we prune an edge  ${\sigma} \lra{\sigma + \varepsilon_j}$, then we also have to prune all the edges  ${\sigma}+ \varepsilon_k \lra{\sigma + \varepsilon_j+ \varepsilon_k}$ with $k>j$. Therefore, in this process of duplicating cubes we only need to duplicate those edges (and its corresponding vertices) that have not been pruned before.

\end{remark}

%{\cb \subsection{Comparing with other Morse resolutions} It is difficult to compare the pruned resolution we obtained in Theorem \ref{C1} to other Morse resolutions that we may find in the literature. 
%The main issue is that, in many cases such as the ones given in \cite{BatziesWelker}, \cite{Joll},\cite{JollenWelker}, finding the acyclic matching is rather technical. In this regard, the Barile-Macchia resolution given in \cite{BarileMacchia, ChauKara} seems to be the most closely related because it could also be interpreted as a pruning algorithm but in a different order. We illustrate the differences of both methods by means of the following example. 

%\begin{example}
  
%\end{example}

%}

\subsection{Minimal pruned resolution} \label{Minimal_1}
Let $\mathbb{F}_{\bullet}^{(X_{\cA},gr)}$ be the cellular resolution that we obtain in Theorem \ref{C1}. Since it depends on the order of the generators of the monomial ideal we will consider the following notion of minimality.

\begin{definition}
Let $I \subseteq R$ be a monomial ideal. We say that it {\it admits a minimal pruned resolution} if there exists an order of the generators of $I$ such that the pruned free resolution that we obtain in Theorem \ref{C1} is minimal
\end{definition}

The pruned free resolution 
$\mathbb{F}_{\bullet}^{(X_{\cA},gr)}$  is minimal if and only if for all $\sigma'\in X^{(i-1)}_{\cA}$ and $\sigma \in X^{(i)}_{\cA}$ we have $gr(\sigma') \neq gr(\sigma)$
whenever $\sigma' \leq \sigma$ or there is no path or a path for which the differential $d_i^\cA(e_\sigma)$, described in Equation \ref{differential_Morse}, has non zero coefficient for the basis element $e_{\sigma'}$ whenever $gr(\sigma') = gr(\sigma)$. This was already mentioned by Batzies and Welker \cite[Lemma 7.5]{BatziesWelker} for any Morse resolution. 

\vskip 2mm

The main  result in this section shows that an ideal with a few number of generators admits a minimal pruned resolution.

\begin{theorem} \label{thm_q_5}
Let $I=(m_1,\dots,m_q)\subseteq R$ be a squarefree monomial ideal. If $q\leq 5$ then $I$ admits a minimal pruned resolution.    
\end{theorem}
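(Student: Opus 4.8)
The plan is to convert minimality into a purely local, degree-by-degree combinatorial condition and then to settle the finitely many configurations allowed by $q\le 5$ by choosing a good order of the generators.

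First I would pin down the shape of a failure of minimality. By the criterion recalled before the statement (cf.\ \cite[Lemma 7.5]{BatziesWelker}), the pruned resolution fails to be minimal exactly when there are critical cells $\sigma\in X^{(i)}_{\cA}$ and $\sigma''\in X^{(i-1)}_{\cA}$ with $gr(\sigma)=gr(\sigma'')$ and a path in $G_{X}^{\cA}$ contributing a nonzero (hence unit) coefficient to $d^{\cA}_i(e_\sigma)$ at $e_{\sigma''}$ in \eqref{differential_Morse}. The first observation is that such a $\sigma''$ can never be a facet of $\sigma$: if $\sigma''=\sigma\setminus\{j\}$ with $gr(\sigma'')=gr(\sigma)$, then the Taylor edge $\sigma''\to\sigma''+\varepsilon_j=\sigma$ is grade preserving, and since $\sigma,\sigma''$ are critical they are never endpoints of a pruned edge, so this edge survives until step $(j)$ of Algorithm \ref{alg1} and is therefore pruned there, contradicting criticality. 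Hence every obstruction is a same-grade, non-facet pair of consecutive-dimension critical cells linked only through a nontrivial zig-zag.

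Next I would localize the obstruction to a single multidegree. Homogeneity of the matching forces $gr$ to be constant along reversed edges and nonincreasing along forward edges, so a path from a facet $\sigma'$ of $\sigma$ can reach grade $gr(\sigma)$ only if $gr(\sigma')=gr(\sigma)$ and the whole path stays in the stratum $gr=\mu$, where $\mu:=gr(\sigma)$. For a squarefree ideal, writing $A_i=\supp(m_i)$ and $S=\supp(\mu)$, the grade-$\mu$ cells are exactly the $\sigma\subseteq P_\mu:=\{i:A_i\subseteq S\}$ with $\bigcup_{i\in\sigma}A_i=S$; they form an up-set $U_\mu$ in the Boolean lattice $2^{P_\mu}$. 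Since a grade-$\mu$ cell is removed only by a grade-$\mu$ pruning, the matching decouples across strata, and on $U_\mu$ it is precisely the pruning run on $2^{P_\mu}$ with the induced order of $P_\mu$, using only the directions $j\in P_\mu$. Thus minimality in degree $\mu$ is equivalent to the Morse differential of $U_\mu$ being zero, i.e.\ to the number of critical cells of $U_\mu$ matching $\dim_\KK \tilde{H}_\bullet$ of its associated (upper Koszul) complex. As $|P_\mu|\le q\le 5$, there are only finitely many up-sets $U_\mu$ to examine, and for $q\le 3$ a short divisibility argument shows that no non-facet same-grade pair can exist at all, so those cases are immediate for every order.

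The remaining work is $q=4,5$: I would classify, up to symmetry, the up-sets $U$ in $2^{[k]}$ with $k\le 5$ that can carry a consecutive-dimension non-facet pair, and for each produce an order of $[k]$ for which the pruning matching on $U$ leaves a Morse differential that is identically zero---either because an intermediate grade-$\mu$ cell forces a pruning that removes one of the two offending critical cells, or because the signed contributions $m(\cP)$ of the admissible zig-zag paths cancel. I expect the genuine obstacle to be twofold. First, the path-coefficient bookkeeping at $k=5$: two same-grade critical cells can be joined by several competing zig-zags, and one must sum the $\pm 1$ weights $m(\cP)$ and show the total vanishes (or can be forced to vanish by reordering). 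Second, and more serious, is the global consistency: the strata $U_\mu$ share the generators $m_1,\dots,m_q$, so the per-degree order constraints must all be realized by a single linear order of $m_1,\dots,m_q$; checking that these constraints are simultaneously satisfiable is exactly where the hypothesis $q\le 5$ is used, the compatibility being what breaks down once $q\ge 6$.
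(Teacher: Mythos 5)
Your setup is sound and agrees with the paper's: you invoke the same minimality criterion (\cite[Lemma 7.5]{BatziesWelker}), you correctly observe that two critical cells in consecutive dimensions with equal multidegree can never be a facet pair (the connecting Taylor edge would necessarily be pruned), and your stratification of the matching by multidegree --- the grade-$\mu$ cells form an up-set $U_\mu$ in the Boolean lattice $2^{P_\mu}$, the matching decouples across strata, and any path contributing a unit coefficient in \eqref{differential_Morse} must stay inside a single stratum --- is a correct and rather clean reformulation. But what you have written is a plan, not a proof: the entire content of the theorem for $q=4,5$ is deferred to a classification ``up to symmetry, of the up-sets $U$ in $2^{[k]}$, $k\le 5$, that can carry a consecutive-dimension non-facet pair,'' together with a choice of order making the Morse differential vanish, and you explicitly leave open the two points on which this classification would stand or fall (cancellation of path coefficients, and compatibility of the per-stratum orders). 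That case analysis is precisely the proof in the paper, and it is not routine bookkeeping: the paper shows that if $m_{\sigma''}=m_\sigma$ for an incomparable pair such as $\{1,2\}$ and $\{1,3,4\}$, divisibility forces a whole chain of equalities ($m_{1,2}=m_{1,2,3}=m_{1,2,4}=m_{1,3,4}=m_{1,2,3,4}$), which in turn forces Algorithm \ref{alg1} either to prune away one of the two offending cells or to leave a configuration in which \emph{no} grade-preserving path from a facet of $\sigma$ to $\sigma''$ exists at all.

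Two of your anticipations also point in the wrong direction, which matters because they are where you expect the hypothesis $q\le 5$ to enter. First, in the surviving configurations the paper does not need any cancellation of signed path weights $m(\cP)$: there simply is no path. Cancellation versus non-cancellation is exactly what distinguishes $q\le 5$ from $q=6$, where Example \ref{ex_q_6} exhibits a path with total coefficient $2$, so minimality fails in characteristic zero (and one cannot hope to restore it by reordering or by sign cancellation --- only by reducing mod $2$). Second, there is no ``global consistency of per-stratum orders'' problem to solve: the paper's argument works for the given (arbitrary) order of generators, with the WLOG relabelings only naming the cells in a potential bad pair; the role of $q\le 5$ is purely that with so few generators the forced lcm-equalities above leave no room for a bad pair to survive with a connecting path. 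So the gap is concrete: the finitely-many-configurations analysis that you correctly identify as necessary is never carried out, and the mechanism you propose for settling it (order selection per stratum plus sign cancellation) is not the mechanism that actually closes those cases.
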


\begin{proof}
We have to check that either there are no cells $\sigma''\in X^{(i-1)}_{\cA}$ and $\sigma \in X^{(i)}_{\cA}$ such that $m_{\sigma''}= m_{\sigma}$ but $\sigma \not\geq \sigma''$ or, if this is the case, there is no path $\sigma'' \longrightarrow \sigma'$  in $G_{X}^{\cA}$ with $\sigma\geq \sigma'\in X^{(i-1)_{\cA}}$ and thus the coefficient of differential $d_i^{\cA}(e_\sigma)$ is zero for the basis element $e_{\sigma''}$. 

\vskip 2mm
The result trivially holds true for $q\leq 3$. For $q=4$ we may consider, without loss of generality, that $\sigma''=(1,1,0,0)=\{1,2\}\in X^{(2)}_{\cA}$ and $\sigma=(1,0,1,1)=\{1,3,4\}\in X^{(3)}_{\cA}$. Assume that $m_{1,2}=m_{1,3,4}$ and thus $m_2 | m_{1,3,4}$, $m_3 | m_{1,2}$ and $m_4 | m_{1,2}$. Therefore 
$$m_{1,2}=m_{1,2,3}=m_{1,2,4}=m_{1,3,4}=m_{1,2,3,4}$$ and we get a contradiction since the pruning algorithm would force to prune both $\sigma''=\{1,2\}$ and $\sigma=\{1,3,4\}$ unless the case that we have previously pruned $\{1,2,3\}$ with either $\{2,3\}$ or $\{1,3\}$, $\{1,2,4\}$ with either $\{2,4\}$ or $\{1,4\}$ and $\{1,2,3,4\}$ with $\{2,3,4\}$. 
%This only happens  these monomials are also equal to  $m_{2,3}=m_{2,4}=m_{2,3,4}$. In this case we have the prunings $(0,1,1,0) \longrightarrow (1,1,1,0)$, $(0,1,0,1) \longrightarrow (1,1,0,1)$ and $(0,1,1,1) \longrightarrow (1,1,1,1)$. 
Then, the cells $\sigma''$ and $\sigma$ survive to the pruning algorithm but there is no path in $G_X^{\cA}$ from $\sigma'\leq \sigma$ being either $\{1,3\}, \{1,4\}$ or $\{3,4\}$ to $\sigma''$ and thus the pruned resolution is minimal. 
We point out that this last case is achieved, for example, for the ideal $I=(x_1 x_2 x_3 x_4,x_1 x_2 x_4 x_5 x_6,x_1 x_3 x_5, x_2 x_3 x_6)$.

\vskip 2mm

For $q=5$ we have to consider the following cases: 

\vskip 2mm

$\bullet$ Consider cells in $X^{(3)}_{\cA}$ and $X^{(4)}_{\cA}$. Without loss of generality, assume  $\sigma''=(1,1,1,0,0)=\{1,2,3\}\in X^{(3)}_{\cA}$ and $\sigma=(1,1,0,1,1)=\{1,2,4,5\}\in X^{(4)}_{\cA}$. In this case we have$$m_{1,2,3}=m_{1,2,3,4}=m_{1,2,3,5}=m_{1,2,4,5}=m_{1,2,3,4,5}$$ and we proceed with a similar argument as above. The pruning algorithm would force to prune both $\sigma''=\{1,2,3\}$ and $\sigma=\{1,2,4,5\}$ unless the case that we have previously pruned $\{1,2,3,4\}$ with either $\{2,3,4\}$, $\{1,3,4\}$ or $\{1,2,4\}$, $\{1,2,3,5\}$ with either $\{2,3,5\}$, $\{1,3,5\}$ or $\{1,2,5\}$ and $\{1,2,3,4,5\}$ with either $\{2,3,4,5\}$ or $\{1,3,4,5\}$. The cells $\sigma''$ and $\sigma$ survive to the pruning algorithm but there is no path in $G_X^{\cA}$ from $\sigma'\leq \sigma$ being either $\{1,2,4\}, \{1,2,5\}, \{1,4,5\}$ or $\{2,4,5\}$ to $\sigma''$ and thus the pruned resolution is minimal. 

\vskip 2mm

$\bullet$ Consider cells in $X^{(2)}_{\cA}$ and $X^{(3)}_{\cA}$.  There are two possible configurations that, without loss of generality, we assume to be: 

\begin{itemize}
    \item[$\cdot$] $\sigma''=(1,1,0,0,0)=\{1,2\}\in X^{(2)}_{\cA}$ and $\sigma=(1,0,1,1,0)=\{1,3,4\}\in X^{(3)}_{\cA}$. Then we have:
$$m_{1,2}=m_{1,2,3}=m_{1,2,4}=m_{1,3,4}=m_{1,2,3,4}$$
This case is analogous to the case $q=4$ and the pruned resolution is minimal. 
\vskip 2mm
    \item[$\cdot$] $\sigma''=(1,1,0,0,0)=\{1,2\}\in X^{(2)}_{\cA}$ and $\sigma=(0,0,1,1,1)=\{3,4,5\}\in X^{(3)}_{\cA}$. Then we have: 
    \begin{align*}
     m_{1,2} &=m_{1,2,3}= m_{1,2,4}= m_{1,2,5} = m_{1,2,3,4}=m_{1,2,3,5} =m_{1,2,4,5}  \\ &= m_{3,4,5}=m_{1,3,4,5}=m_{2,3,4,5}= m_{1,2,3,4,5}.  \end{align*}
     and we get a contradiction because the pruning algorithm forces to prune $\{3,4,5\}$ and $\{1,3,4,5\}$ at the first step. 
\end{itemize}
\end{proof}

For $q=6$ generators we may find examples where the pruned resolution is not minimal. For instance, we present an example of a triangulation of the projective plane, whose minimal free resolution depends on the characteristic of the base field (see \cite{KimuraTeraiYoshida}). As a result, we cannot obtain a minimal pruned resolution in characteristic zero.

\begin{example} \label{ex_q_6}.
 Let $I\subseteq \KK[x_1,\dots ,x_{10}]$ be the squarefree monomial ideal
 $$I=(x_1x_2x_8x_9x_{10}, x_2x_3x_4x_5x_{10},  x_5x_6x_7x_8x_{10}, x_1x_4x_5x_6x_9, x_1x_2x_3x_6x_7, x_3x_4x_7x_8x_9).$$ The pruning steps of Algorithm \ref{alg1} are indicated in red in the following diagram:

 \begin{tikzpicture}[on top/.style={preaction={draw=white,-,line width=#1}},
    on top/.default=4pt]
 \hskip -1cm  \matrix (m) [
    matrix of nodes,
    column sep=1mm,
    row sep=2mm,
   font=\tiny,
  ] {
                 &               &             &             &                 &                 &             &               &             &               & ${\crd m_{1,2,3,4,5,6}}$ &   \\
                 &               &             &             & ${\crd m_{1,2,3,4,5}}$ &                 &             & ${\crd m_{1,2,3,5,6}}$ &             & ${\crd m_{1,2,4,5,6}}$ & ${\crd m_{1,3,4,5,6}}$ & ${\crd m_{2,3,4,5,6}}$ \\
                 & ${\crd m_{1,2,3,5}}$ &          & ${\crd m_{1,2,4,5}}$ & ${\crd m_{1,3,4,5}}$   & ${\crd m_{2,3,4,5}}$     & ${\crd m_{1,2,5,6}}$ & ${\crd m_{1,3,5,6}}$   & ${\crd m_{2,3,5,6}}$ & \fbox{$m_{1,4,5,6}$}   & ${\crd m_{2,4,5,6}}$   & ${\crd m_{3,4,5,6}}$ \\
     ${\crd m_{1,2,5}}$ & $m_{1,3,5}$     & $m_{2,3,5}$ & ${\crd m_{1,4,5}}$   & $m_{2,4,5}$     & ${\crd m_{3,4,5}}$       & $m_{1,5,6}$   & ${\crd m_{2,5,6}}$     & ${\crd m_{3,5,6}}$   &               & $m_{4,5,6}$     &   \\
     $m_{1,5}$   &  $m_{2,5}$    & $m_{3,5}$   &             & $m_{4,5}$       &                 &             & $m_{5,6}$         &             &               &               &   \\
                 & $m_5$         &             &             &                 &                 &             &               &             &               &               &   \\
                 &               &             &             &                 &                 &             &               &             &               & ${\crd m_{1,2,3,4,6}}$ &   \\
                 &               &             &             & ${\crd m_{1,2,3,4}}$   &                 &             & ${\crd m_{1,2,3,6}}$ &             & ${\crd m_{1,2,4,6}}$ & ${\crd m_{1,3,4,6}}$ & ${\crd m_{2,3,4,6}}$ \\
                 & \fbox{$m_{1,2,3}$}   &             & $m_{1,2,4}$ & $m_{1,3,4}$     & $m_{2,3,4}$     & $m_{1,2,6}$ & ${\crd m_{1,3,6}}$   & $m_{2,3,6}$ & ${\crd m_{1,4,6}}$   & ${\crd m_{2,4,6}}$   & $m_{3,4,6}$ \\
     $m_{1,2}$   & $m_{1,3}$     & $m_{2,3}$   & $m_{1,4}$   & $m_{2,4}$       & $m_{3,4}$       & $m_{1,6}$   & $m_{2,6}$     & $m_{3,6}$   &               & $m_{4,6}$     &   \\
     $m_{1}$     &  $m_2$        & $m_3$       &             & $m_4$           &                 &             & $m_6$         &             &               &               &   \\
                 & $\emptyset$          &             &             &                 &                 &             &               &             &               &               &   \\
  };
  \begin{scope}[
    font=\footnotesize,
    inner sep=.25em,
    every node/.style={fill=white},
  ]
  %Primer cubo
  \draw (m-11-1) -- (m-12-2);
  \draw (m-11-3) -- (m-12-2);
  \draw (m-11-2) -- (m-12-2);
  \draw (m-10-2) -- (m-11-1);
  \draw (m-10-1) -- (m-11-1);
  \draw (m-10-2) -- (m-11-3);
  \draw (m-10-3) -- (m-11-3);
  \draw (m-9-2) -- (m-10-2);
  \draw (m-9-2) -- (m-10-1);
  \draw (m-9-2) -- (m-10-3);
  \draw[on top] (m-10-1) -- (m-11-2);
  \draw[on top] (m-10-3) -- (m-11-2);
  %Segundo
  \draw (m-10-4) -- (m-11-5);
  \draw (m-10-6) -- (m-11-5);
  \draw (m-10-5) -- (m-11-5);
  \draw (m-9-5) -- (m-10-4);
  \draw (m-9-4) -- (m-10-4);
  \draw (m-9-5) -- (m-10-6);
  \draw (m-9-6) -- (m-10-6);
  \draw (m-8-5) -- (m-9-5);
  \draw (m-8-5) -- (m-9-4);
  \draw[draw=red, <-] (m-8-5) -- (m-9-6);
  \draw[on top] (m-9-4) -- (m-10-5);
  \draw[on top] (m-9-6) -- (m-10-5);
  %Tercero
  \draw (m-10-7) -- (m-11-8);
  \draw (m-10-9) -- (m-11-8);
  \draw (m-10-8) -- (m-11-8);
  \draw (m-9-8) -- (m-10-7);
  \draw (m-9-7) -- (m-10-7);
  \draw (m-9-8) -- (m-10-9);
  \draw (m-9-9) -- (m-10-9);
  \draw[draw=red, <-] (m-8-8) -- (m-9-8);
  \draw (m-8-8) -- (m-9-7);
  \draw (m-8-8) -- (m-9-9);
  \draw[on top] (m-9-7) -- (m-10-8);
  \draw[on top] (m-9-9) -- (m-10-8);
  %Cuarto
  \draw (m-9-10) -- (m-10-11);
  \draw (m-9-12) -- (m-10-11);
  \draw (m-9-11) -- (m-10-11);
  \draw[draw=red, <-] (m-8-11) -- (m-9-10);
  \draw (m-8-10) -- (m-9-10);
  \draw (m-8-11) -- (m-9-12);
  \draw (m-8-12) -- (m-9-12);
  \draw (m-7-11) -- (m-8-11);
  \draw (m-7-11) -- (m-8-10);
  \draw[draw=red, <-] (m-7-11) -- (m-8-12);
  \draw[draw=red, <-, on top] (m-8-10) -- (m-9-11);
  \draw[on top] (m-8-12) -- (m-9-11);

    %quinto cubo
    \draw (m-5-1) -- (m-6-2);
    \draw (m-5-3) -- (m-6-2);
    \draw (m-5-2) -- (m-6-2);
    \draw (m-4-2) -- (m-5-1);
    \draw (m-4-1) -- (m-5-1);
    \draw (m-4-2) -- (m-5-3);
    \draw (m-4-3) -- (m-5-3);
    \draw (m-3-2) -- (m-4-2);
    \draw (m-3-2)[draw=red, <-] -- (m-4-1);
    \draw (m-3-2) -- (m-4-3);
    \draw[on top] (m-4-1) -- (m-5-2);
    \draw[on top] (m-4-3) -- (m-5-2);
    %sexto
    \draw (m-4-4) -- (m-5-5);
    \draw (m-4-6) -- (m-5-5);
    \draw (m-4-5) -- (m-5-5);
    \draw (m-3-5) -- (m-4-4);
    \draw[draw=red, <-] (m-3-4) -- (m-4-4);
    \draw[draw=red, <-] (m-3-5) -- (m-4-6);
    \draw (m-3-6) -- (m-4-6);
    \draw (m-2-5) -- (m-3-5);
    \draw (m-2-5) -- (m-3-4);
    \draw[draw=red, <-] (m-2-5) -- (m-3-6);
    \draw[on top] (m-3-4) -- (m-4-5);
    \draw[on top] (m-3-6) -- (m-4-5);
    %septimo
    \draw (m-4-7) -- (m-5-8);
    \draw (m-4-9) -- (m-5-8);
    \draw (m-4-8) -- (m-5-8);
    \draw (m-3-8) -- (m-4-7);
    \draw (m-3-7) -- (m-4-7);
    \draw[draw=red, <-] (m-3-8) -- (m-4-9);
    \draw (m-3-9) -- (m-4-9);
    \draw (m-2-8) -- (m-3-8);
    \draw (m-2-8) -- (m-3-7);
    \draw[draw=red, <-] (m-2-8) -- (m-3-9);
    \draw[draw=red, <-, on top] (m-3-7) -- (m-4-8);
    \draw[on top] (m-3-9) -- (m-4-8);
    %octavo
    \draw (m-3-10) -- (m-4-11);
    \draw (m-3-12) -- (m-4-11);
    \draw (m-3-11) -- (m-4-11);
    \draw (m-2-11) -- (m-3-10);
    \draw (m-2-10) -- (m-3-10);
    \draw[draw=red, <-] (m-2-11) -- (m-3-12);
    \draw (m-2-12) -- (m-3-12);
    \draw (m-1-11) -- (m-2-11);
    \draw (m-1-11) -- (m-2-10);
    \draw[draw=red, <-] (m-1-11) -- (m-2-12);
    \draw[draw=red, <-, on top] (m-2-10) -- (m-3-11);
    \draw[on top] (m-2-12) -- (m-3-11);
  \end{scope}
\end{tikzpicture}

We point out that all the pruned edges in the graph correspond to monomials $m_\alpha= x_1\cdots x_{10}$. Among those elements  surviving to the algorithm we find  
$$m_{1,2,3}= m_{1,4,5,6}=x_1\cdots x_{10}$$ that we boxed out. Therefore, there exists $\sigma' \in X_{\cA}^{(3)}$ with $m_{1,2,3} = m_{\sigma'}$ and $\sigma \in X_{\cA}^{(4)}$ with $m_{1,4,5,6} = m_{\sigma}$ that are not comparable. However, we have a path $\cP : \sigma \longrightarrow \sigma'$ given by $$  m_{1,4,5,6} \longrightarrow m_{1,4,5} \longrightarrow m_{1,3,4,5} \longrightarrow m_{1,3,5} \longrightarrow m_{1,2,3,5}\longrightarrow m_{1,2,3}$$
such that the differential $d_i^\cA(e_\sigma)$, described in Equation \ref{differential_Morse}, has  coefficient $m(\cP)=2$ for the basis element $e_{\sigma'}$. Thus, the pruned resolution is minimal if the characteristic of the base field is two but it is not minimal otherwise.
\end{example}

\subsection{Minimal pruned resolution of subideals} \label{Minimal_11}
Admitting a minimal pruned resolution is not always preserved by subideals. However, we present some specific cases where this property holds, which will be useful in later sections.

 \begin{proposition} \label{prop_a} Let $I=(m_1,\dots,m_q)\subseteq R$ be a monomial ideal and consider the ideal $J=(m_1,\dots,m_{q-1}) \subset R$. If 
$I$ admits a minimal pruned resolution where no pruning have been performed at the $q$-th step of the pruning Algorithm \ref{alg1} then $J$ has a minimal pruned resolution.
\end{proposition}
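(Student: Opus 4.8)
The plan is to exploit the iterative ``cube-duplication'' structure of the Taylor complex recalled in the second Remark of this section. Writing $X_{\tt Taylor}$ for the Taylor complex of $I$ on $q$ vertices, I split its faces into a \emph{lower copy} $L=\{\,\sigma : \sigma_q=0\,\}$ and an \emph{upper copy} $U=\{\,\sigma : \sigma_q=1\,\}$. The lower copy, together with its labels $m_\sigma$, is canonically identified with the Taylor complex of $J=(m_1,\dots,m_{q-1})$: a face $\sigma$ with $\sigma_q=0$ involves only $m_1,\dots,m_{q-1}$ and carries the same grading $gr(\sigma)$ whether computed in $I$ or in $J$.

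First I would check that the first $q-1$ steps of Algorithm \ref{alg1} applied to $I$ restrict, on $L$, to the whole of Algorithm \ref{alg1} applied to $J$ (which runs over directions $1,\dots,q-1$). The point is that for $j<q$ the edge $\sigma\to\sigma+\varepsilon_j$ never alters the $q$-th coordinate, so each pruning at step $j$ takes place entirely inside $L$ or entirely inside $U$; moreover, pruning an upper edge deletes only upper vertices (and the direction-$q$ edges issuing from them), so it can never remove a vertex of $L$. Hence the ``survived'' bookkeeping on $L$ is insensitive to what happens on $U$, and the lower prunings coincide step by step with those of $J$. Invoking the hypothesis that \emph{no} edge is pruned at the $q$-th step, I conclude that the matching $\cA_P$ for $I$ contains no direction-$q$ edge, that its restriction to $L$ equals the matching $\cA_P$ for $J$, and hence that the lower critical cells of $(X_{\cA_P},gr)$ for $I$ are exactly the critical cells of the pruned complex of $J$.

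The heart of the argument is a monotonicity property of gradient paths in the Morse graph $G_X^{\cA_P}$ of $I$. Since no direction-$q$ edge is matched, every reversed edge $\sigma\Longrightarrow\sigma+\varepsilon_j$ has $j<q$ and preserves the $q$-th coordinate, while an unmatched edge $\tau\to\tau-\varepsilon_k$ preserves it for $k<q$ and \emph{decreases} it from $1$ to $0$ for $k=q$. Thus the $q$-th coordinate is non-increasing along every directed path, so no path passes from $L$ into $U$; in particular any gradient path between two lower cells stays entirely inside $L$, where $G_X^{\cA_P}$ for $I$ coincides with the Morse graph of $J$ (same edges, same incidence signs $[\sigma:\sigma']$, same weights $m(\sigma,\sigma')$). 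Consequently, for lower critical cells $\sigma\in X^{(i)}$ and $\sigma''\in X^{(i-1)}$ the coefficient of $e_{\sigma''}$ in $d_i^{\cA_P}(e_\sigma)$ computed for $I$ equals the one computed for $J$, because the facets $\sigma'\le\sigma$ of a lower cell are lower and all contributing paths $\sigma'\to\sigma''$ are lower.

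Minimality then transfers immediately. By the criterion recalled before Theorem \ref{thm_q_5}, $I$ admitting a minimal pruned resolution means that every such coefficient with $gr(\sigma)=gr(\sigma'')$ vanishes; restricting to lower cells, the same coefficients for $J$ vanish, so the pruned resolution of $J$ is minimal. The main obstacle is exactly the path-monotonicity step: one must be certain that the hypothesis ``no pruning at step $q$'' genuinely forbids any gradient path from re-entering the upper copy, for if a direction-$q$ edge were matched, the reversed edge $\sigma\Longrightarrow\sigma+\varepsilon_q$ would permit lower-to-upper excursions, producing paths between lower cells with no counterpart in $J$ and breaking the differential comparison. Everything else is careful bookkeeping of the identification $L\cong X_{\tt Taylor}(J)$.
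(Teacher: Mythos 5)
Your proof is correct and follows essentially the same route as the paper, whose own proof is a one-line assertion that the restriction of the pruned resolution of $I$ to the subcomplex corresponding to $J$ must itself be a minimal pruned resolution. What you have added — the step-by-step identification of the lower cube with $X_{\tt Taylor}(J)$, the observation that prunings in the two copies never interfere, and the path-monotonicity argument showing that gradient paths between lower cells cannot pass through the upper copy, so the Morse differentials agree — is exactly the bookkeeping the paper leaves implicit, with the path-monotonicity step being the genuine content behind the paper's claim.
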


\begin{proof}
 If there is no pruning at the last step of Algorithm \ref{alg1} and we get a minimal pruned resolution, it means that the restriction to the ideal $J$ has to provide a minimal pruned resolution as well.
\end{proof}

The previous result is no longer true without the assumption of no pruning at the $q$-th step.
\begin{example}
Let $J$ be the ideal considered in Example \ref{ex_q_6}. It does not admit a minimal pruned resolution if the characteristic of the base field is zero. However, the ideal $I=J+(x_2,x_3,x_8)$ admits a minimal pruned resolution.
\end{example}

\begin{corollary} \label{cor_a}
Let $I=(m_1,\dots,m_q)\subseteq R$ be a monomial ideal  and  consider the ideal $J=(m_1,\dots,m_{q-1}) \subset R$. If $I$ admits a minimal pruned resolution and $m_q=x_1^{a_1}\cdots x_n^{a_n}$ is a monomial with  $a_i$ larger than the degree of $x_i$ in any generator of $J$ (or $a_i=0$), then  $J$ admits a minimal pruned resolution.  
\end{corollary}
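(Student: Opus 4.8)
The plan is to deduce the statement directly from Proposition \ref{prop_a}: it suffices to check that, running the pruning Algorithm \ref{alg1} on $I$ with respect to the ordering $m_1,\dots,m_q$ in which $m_q$ is the last generator (the ordering realizing the minimal pruned resolution of $I$, and the one to which Proposition \ref{prop_a} applies), no pruning is performed at the $q$-th step. Once this is shown, Proposition \ref{prop_a} immediately gives that $J=(m_1,\dots,m_{q-1})$ admits a minimal pruned resolution.

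First I would recall what the $q$-th step does. It examines the edges $\sigma \lra \sigma+\varepsilon_q$ with $\sigma_q=0$, i.e.\ with $\sigma\subseteq\{1,\dots,q-1\}$, and prunes such an edge only if it survived the previous steps and $gr(\sigma)=gr(\sigma+\varepsilon_q)$. Since $gr(\sigma)=m_\sigma={\rm lcm}(m_i\mid i\in\sigma)$ and $gr(\sigma+\varepsilon_q)={\rm lcm}(m_\sigma,m_q)$, the coincidence $gr(\sigma)=gr(\sigma+\varepsilon_q)$ holds if and only if $m_q\mid m_\sigma$. So the whole matter reduces to showing that $m_q$ divides no $m_\sigma$ with $\sigma\subseteq\{1,\dots,q-1\}$.

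This is the divisibility computation, and it is exactly where the hypothesis on $m_q$ enters. For such a $\sigma$ one has $\deg_{x_i}(m_\sigma)=\max_{j\in\sigma}\deg_{x_i}(m_j)$, which is bounded above by the largest degree of $x_i$ occurring in any generator of $J$. Fix any index $i$ with $a_i>0$; by hypothesis $a_i$ strictly exceeds that largest degree, so $\deg_{x_i}(m_q)=a_i>\deg_{x_i}(m_\sigma)$, whence $m_q\nmid m_\sigma$. As $m_q\neq 1$, at least one exponent $a_i$ is positive, so the divisibility fails for every $\sigma\subseteq\{1,\dots,q-1\}$. Hence $gr(\sigma)\neq gr(\sigma+\varepsilon_q)$ for all admissible $\sigma$, and no edge is pruned at the $q$-th step; invoking Proposition \ref{prop_a} finishes the proof.

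I do not expect a genuine obstacle here, since the transfer of minimality is entirely handled by Proposition \ref{prop_a}. The only points that require care are bookkeeping: that the $x_i$-degree of an lcm over a subset of generators of $J$ equals the maximum of the individual $x_i$-degrees (hence is bounded by the maximum of $\deg_{x_i}$ over all generators of $J$), and the observation that placing $m_q$ last is precisely the configuration to which Proposition \ref{prop_a} is designed to apply.
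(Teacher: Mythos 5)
Your proposal is correct and takes essentially the same route as the paper: the paper's own proof likewise observes that the degree condition on $m_q$ guarantees no pruning at the $q$-th step of Algorithm \ref{alg1} and then invokes Proposition \ref{prop_a}. The only difference is that you spell out the divisibility computation (that $m_q \nmid m_\sigma$ for every $\sigma \subseteq \{1,\dots,q-1\}$, since some $a_i>0$ strictly exceeds the $x_i$-degree of every generator of $J$), which the paper leaves implicit.
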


\begin{proof}
 The condition on the degrees of the monomial $m_q=x_1^{a_1}\cdots x_n^{a_n}$ ensures that we will not have a pruning  at the last step of Algorithm \ref{alg1}. Thus the result follows from Proposition \ref{prop_a}.
\end{proof}

Let $I\subseteq R$ be a monomial ideal minimally generated by $G(I)=\{m_1,\dots, m_q\}$. Given  a monomial $m\in R$, let  $I_{\leq m}:=(\{m_i \in G(I) \; | \; m_i \,{\rm divides}\, m\})$. Gasharov, Hibi and Peeva \cite[Theorem 2.1]{GHPeeva} described a minimal free resolution of  $I_{\leq m}$ from a minimal free resolution of  $I$. The following result could be considered an analogue in the context of pruned resolutions.

 \begin{corollary} \label{prop_peeva} Let $I\subseteq R$ be a monomial ideal and $m\in R$ a monomial. If 
$I$ admits a minimal pruned resolution then $I_{\leq m}$ also admits a minimal pruned resolution.
\end{corollary}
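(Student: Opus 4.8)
The plan is to work with the order of the generators already witnessing the minimal pruned resolution of $I$ and to give $I_{\leq m}$ the \emph{induced} order, thereby sidestepping any delicate reordering. The starting observation is a clean description of the faces coming from $I_{\leq m}$: for a Taylor face $\sigma$ of $I$ one has $m_\sigma \mid m$ if and only if $\sigma$ is supported only on those generators of $I$ that divide $m$. Indeed, an lcm of divisors of $m$ divides $m$, while if $\sigma$ involves a generator $m_k$ with $m_k \nmid m$ then $m_k \mid m_\sigma$ forces $m_\sigma \nmid m$. Consequently the Taylor complex of $I_{\leq m}$ is exactly the full subcomplex of $X_{\tt Taylor}$ spanned by the divisor-generators, and it coincides with the set of faces $\sigma$ with $gr(\sigma) \mid m$.

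Next I would show that the pruning matching $\cA_P$ of $I$, restricted to this subcomplex, is precisely the pruning matching of $I_{\leq m}$. The key point is that a pruning step in a \emph{non-divisor} direction $\varepsilon_k$ can never prune, nor delete a vertex of, a divisor-supported face $\tau$: such a prune would require $gr(\tau)=gr(\tau+\varepsilon_k)$, i.e. $m_k \mid m_\tau$, which is impossible because $m_\tau \mid m$ while $m_k \nmid m$. Hence divisor-supported faces survive exactly the divisor-direction steps of Algorithm \ref{alg1}, these steps are carried out in the same relative order for $I$ and for $I_{\leq m}$, and the pruning condition $m_j \mid m_\tau$ is intrinsic to the monomials; by induction on the steps the two matchings agree on the subcomplex. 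In particular the divisor-supported $\cA_P$-critical cells of $I$ are exactly the $\cA_P$-critical cells of $I_{\leq m}$, with the same grades.

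Finally I would transfer minimality using the criterion recalled after Theorem \ref{C1}. Comparable critical cells of equal grade cannot occur for $I_{\leq m}$, since they would already occur for $I$. For the path condition the crucial fact is grade-monotonicity of Morse paths in $G_X^{\cA}$: a regular edge $\tau \lra \rho$ satisfies $gr(\rho) \mid gr(\tau)$, and a reversed (homogeneous) matching edge preserves the grade, so along any path the grade can only pass to a divisor. Thus a Morse path contributing to the coefficient of a critical cell $\sigma''$ in $d_i^{\cA}(e_\sigma)$ with $gr(\sigma'')=gr(\sigma)$ begins at a facet $\sigma'\le\sigma$ and, by grade-monotonicity, forces $gr(\sigma')$ and every intermediate grade to equal $gr(\sigma)$; since $gr(\sigma)\mid m$, the whole path lies in the subcomplex. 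Because the restricted matching coincides with that of $I_{\leq m}$, the values $m(\cP)$ and the set of such paths are identical, so the relevant differential coefficients for $I_{\leq m}$ equal those for $I$, and these vanish by minimality of the pruned resolution of $I$. Hence the induced order yields a minimal pruned resolution of $I_{\leq m}$.

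I expect the main obstacle to be the bookkeeping in the second step, namely verifying that deleting vertices and their incident edges during the non-divisor steps never disturbs the divisor-supported part, so that the restricted matching is genuinely the matching of $I_{\leq m}$, together with making the grade-monotonicity argument precise enough to conclude that \emph{all} intermediate cells of an equal-grade path lie in the subcomplex. Once these two points are nailed down the transfer of minimality is automatic, and in particular no iteration of Proposition \ref{prop_a}, which would force a non-divisor generator into the last position and require the absence of pruning there, is needed.
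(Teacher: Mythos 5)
Your proposal is correct and takes essentially the same route as the paper's own proof: the key observation in both is that divisibility prevents any pruning between the divisor-supported faces and the faces involving generators not dividing $m$, so the pruning of $I$ restricts exactly to the pruning of $I_{\leq m}$ with the induced order. Your write-up simply makes explicit the bookkeeping (agreement of the matchings, and the grade-monotonicity argument showing equal-grade Morse paths stay inside the subcomplex) that the paper's very terse proof leaves implicit.
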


\begin{proof}
Without loss of generality assume that $I_{\leq m}=(m_1,\dots,m_r)$, $r\leq q$. Given the divisibility condition that defines this ideal we have that there is no pruning between the vertices in the Taylor graph corresponding to the ideal $I_{\leq m}$ and those involving the monomials $m_{r+1}, \dots, m_q$. Therefore, the ideal  $I_{\leq m}$ admits a minimal pruned resolution whenever $I$ does so.
\end{proof}

\section{Pruned resolution of powers of monomial ideals} \label{Sec2_3}
In this section we present a Morse resolution of powers of squarefree monomial ideals that starts, instead of the Taylor simplicial complex,  with a simplicial complex introduced by Cooper, El Khoury, Faridi, Mayes-Tang, Morey, \c{S}ega and Spiroff \cite{FaridiSimplicial}.

\vskip 2mm

Let $R=\KK[x_1, \ldots, x_n]$ be a polynomial ring  over a field $\KK$ and let $I=(m_1,\dots,m_q) \subseteq R$ be a squarefree monomial ideal. 
In this section we are interested in free resolutions of powers $I^r$, with $r\geq 1$. A set of generators of  $I^r$ is 
$$\{ m_{i_1} \cdots  m_{i_r} \; | \; 1\leq i_1 \leq \cdots \leq i_r \leq q\} $$ and thus the Taylor complex  $X_{{\tt Taylor}(I^r)}$ associated to this ideal has dimension $\binom{q+r-1}{r} -1$. Having a large number of generators makes the pruning algorithm (Algorithm \ref{alg1}) more difficult to execute, and it is more likely to produce a resolution that is far from minimal.

\vskip 2mm

In \cite[Theorem 5.9]{FaridiSimplicial} the authors defined a simplicial complex $$\mathbb{L}^r_q \subseteq X_{{\tt Taylor}(I^r)}$$
which supports a free resolution $\mathbb{F}_{\bullet}^{(\mathbb{L}^r_q,gr)}$ of any squarefree monomial ideal $I$. They even went further and used discrete Morse theory to find a smaller simplicial complex $\mathbb{L}^r(I) \subseteq \mathbb{L}^r_q $ that also supports a free resolution. In this section we will use a pruning algorithm on $\mathbb{L}^r_q$ that will lead to a cellular resolution.

\vskip 2mm

We start reviewing the construction of $\mathbb{L}^r_q$ given in \cite[Proposition 4.3]{FaridiSimplicial}. 
Recall that the vertices of the Taylor complex of $I$ are indexed by $\sigma \in \{0,1\}^q$ with the standard vectors  $\varepsilon_1,\dots, \varepsilon_q$ corresponding to the generators. We now identify the generators of $I^r$ with the elements of $$\mathcal{N}^r_q:=\{\sigma=(\sigma_1,\dots,\sigma_q) \in \ZZ_{\ge 0}^q \; | \;  |\sigma| =r\}.$$ From this set we can construct $X_{{\tt Taylor}(I^r)}$ by considering the corresponding lcm's. The subcomplex $\mathbb{L}^r_q$ is defined as follows.

% that is an ideal generated
%by monomials ${\bf x}^\alpha:=x_1^{\alpha_1}\cdots x_n^{\alpha_n}$, where $\alpha \in \ZZ_{\geq 0}^n$.
%Throughout this work, we denote $|\alpha|= {\alpha_1}+\cdots +{\alpha_n}$ and $\varepsilon_1,\dots,\varepsilon_n$
%will be the natural basis of $\ZZ^n$. Given a set of generators $\{m_1,\dots,m_q\}$ of $I$,
%we will consider  the monomials $m_{\sigma}:={\rm lcm}(m_i \hskip 2mm | \hskip 2mm \sigma_i=1)$ for any $\sigma \in \{0,1\}^q$. Sometimes we will denote these monomials  $m_{i_1,\dots, i_s }:={\rm lcm}(m_{i_1}, \dots, m_{i_s})$ with $1\leq i_1 < \cdots < i_s \leq q$ to emphasize those $m_i$'s showing up in the expression.

\begin{definition}
The facets of the simplicial complex $\mathbb{L}^r_q \subseteq X_{{\tt Taylor}(I^r)}$  are:
$$
\mathbb{L}^r_q=\begin{cases}
\langle F^r_1, F^r_2, \dots, F^r_q, G^r_1,\dots, G^r_q\rangle &
\text{if $r>3$ and $q\ge 2$}\\
\langle B^r, G^r_1,\dots, G^r_q\rangle &
\text{if $r = 3$ and $q\geq 2$ or $r = 2$ and $q > 2$ }\\
\langle G^2_1, G^2_2\rangle &
\text{if $r = 2$ and $q=2$}\\
\langle \mathcal{N}^r_q \rangle &
\text{if $r=1$ or $q=1$.}\\
\end{cases}
$$  
where, setting $s=\lceil \frac{r}{2}\rceil$, we have:
\begin{align*}
%\mathcal{N}^r_q = &\{\sigma=(\sigma_1,\ldots, \sigma_q) \in \ZZ_{\ge 0}^q \; | \;  |\sigma| =r\}, \\
F^r_i = & \{\sigma \in \mathcal{N}^r_q   \; | \; \sigma_i \le \max\{r-1, s\} \mbox{ and } \sigma_j \leq s \mbox{ for } j \neq i\},\\ 
G^r_i=&\{\sigma \in \mathcal{N}^r_q  \; | \; \sigma_i \ge r-1\}, \\
B^r= &\left\{\sigma \in \mathcal{N}^r_q   \; | \; \sigma_i \le s \mbox{ for all } i\in \{1,\dots,q\} \right\}. 
\end{align*}
\end{definition}

\begin{example}
Let $I\subseteq R$ be a monomial ideal with $q=3$ generators. The simplicial complex $\mathbb{L}^2_3 \subseteq X_{{\tt Taylor}(I^2)}$ is
\vskip 2mm

$$\begin{tikzpicture}
\tikzstyle{point}=[inner sep=0pt]
\coordinate (a) at (0,1); 
\coordinate (b) at (-1,0);
\coordinate (c) at (1,0) ;
\coordinate (d) at (-2,1);
\coordinate (e) at (2,1) ;
\coordinate (f) at (0,-1) ;
\draw [fill=gray!20](a.center) -- (b.center) -- (c.center);
\draw [fill=gray!20](a.center) -- (b.center) -- (d.center);
\draw [fill=gray!20](a.center) -- (c.center) -- (e.center);
\draw [fill=gray!20](b.center) -- (c.center) -- (f.center);
\draw (a.center) -- (b.center);
\draw (a.center) -- (c.center);
\draw (a.center) -- (d.center);
\draw (a.center) -- (e.center);
\draw (b.center) -- (c.center);
\draw (b.center) -- (d.center);
\draw (b.center) -- (f.center);
\draw (c.center) -- (e.center);
\draw (c.center) -- (f.center);
\pgfputat{\pgfxy(-2.7,1)}{\pgfbox[left,center]{${\crd m_1}$}}
\pgfputat{\pgfxy(2.1,1)}{\pgfbox[left,center]{${\crd m_2}$}}
\pgfputat{\pgfxy(-0.2,-1.3)}{\pgfbox[left,center]{${\crd m_3}$}}
\pgfputat{\pgfxy(-0.2,1.2)}{\pgfbox[left,center]{${\crd m_4}$}}
\pgfputat{\pgfxy(-1.7,0)}{\pgfbox[left,center]{${\crd m_5}$}}
\pgfputat{\pgfxy(1.2,0)}{\pgfbox[left,center]{${\crd m_6}$}}
\pgfputat{\pgfxy(-1.2,.6)}{\pgfbox[left,center]{$G_1^2$}}
\pgfputat{\pgfxy(.8,.6)}{\pgfbox[left,center]{$G_2^2$}}
\pgfputat{\pgfxy(-.2,-.4)}{\pgfbox[left,center]{$G_3^2$}}
\pgfputat{\pgfxy(-.2,.4)}{\pgfbox[left,center]{$B^2$}}
\draw[black, fill=black] (a) circle(0.04);
\draw[black, fill=black] (b) circle(0.04);
\draw[black, fill=black] (c) circle(0.04);
\draw[black, fill=black] (d) circle(0.04);
\draw[black, fill=black] (e) circle(0.04);
\draw[black, fill=black] (f) circle(0.04);
\end{tikzpicture}$$

\vskip 2mm
\noindent where we denote in red  the vertices corresponding to the generators of $I^2$. We visualize the graph of $\mathbb{L}^2_3$ as the subgraph of $ G_{X_{{\tt Taylor}(I^2)}}$ given by the boxed vertices 

\begin{tikzpicture}[on top/.style={preaction={draw=white,-,line width=#1}},
    on top/.default=2pt]
 \hskip -1cm  \matrix (m) [
    matrix of nodes,
    column sep=1mm,
    row sep=2mm,
   font=\tiny,
  ] {
                 &               &             &             &                 &                 &             &               &             &               & ${ m_{1,2,3,4,5,6}}$ &   \\
                 &               &             &             & ${ m_{1,2,3,4,5}}$ &                 &             & ${ m_{1,2,3,5,6}}$ &             & ${ m_{1,2,4,5,6}}$ & ${ m_{1,3,4,5,6}}$ & ${ m_{2,3,4,5,6}}$ \\
                 & ${ m_{1,2,3,5}}$ &          & ${ m_{1,2,4,5}}$ & ${ m_{1,3,4,5}}$   & ${ m_{2,3,4,5}}$     & ${ m_{1,2,5,6}}$ & ${ m_{1,3,5,6}}$   & ${ m_{2,3,5,6}}$ & $m_{1,4,5,6}$   & ${ m_{2,4,5,6}}$   & ${ m_{3,4,5,6}}$ \\
     ${ m_{1,2,5}}$ & $m_{1,3,5}$     & $m_{2,3,5}$ & \fbox{${ m_{1,4,5}}$}   & $m_{2,4,5}$     & ${ m_{3,4,5}}$       & $m_{1,5,6}$   & ${ m_{2,5,6}}$     & \fbox{${ m_{3,5,6}}$}   &               & \fbox{$m_{4,5,6}$}     &   \\
     \fbox{$m_{1,5}$}   &  $m_{2,5}$    & \fbox{$m_{3,5}$}   &             & \fbox{$m_{4,5}$}       &                 &             & \fbox{$m_{5,6}$}         &             &               &               &   \\
                 & \fbox{$m_5$}         &             &             &                 &                 &             &               &             &               &               &   \\
                 &               &             &             &                 &                 &             &               &             &               & ${ m_{1,2,3,4,6}}$ &   \\
                 &               &             &             & ${ m_{1,2,3,4}}$   &                 &             & ${ m_{1,2,3,6}}$ &             & ${ m_{1,2,4,6}}$ & ${ m_{1,3,4,6}}$ & ${ m_{2,3,4,6}}$ \\
                 & $m_{1,2,3}$   &             & $m_{1,2,4}$ & $m_{1,3,4}$     & $m_{2,3,4}$     & $m_{1,2,6}$ & ${ m_{1,3,6}}$   & $m_{2,3,6}$ & ${ m_{1,4,6}}$   & \fbox{$m_{2,4,6}$}  & $m_{3,4,6}$ \\
     $m_{1,2}$   & $m_{1,3}$     & $m_{2,3}$   & \fbox{$m_{1,4}$}   & \fbox{$m_{2,4}$ }      & $m_{3,4}$       & $m_{1,6}$   & \fbox{$m_{2,6}$}     & \fbox{$m_{3,6}$}   &               & \fbox{$m_{4,6}$}     &   \\
     \fbox{$m_{1}$}     &  \fbox{$m_2$}        & \fbox{$m_3$}       &             & \fbox{$m_4$}           &                 &             & \fbox{$m_6$}         &             &               &               &   \\
                 & $\emptyset$          &             &             &                 &                 &             &               &             &               &               &   \\
  };
  \begin{scope}[
    font=\footnotesize,
    inner sep=.25em,
    every node/.style={fill=white},
  ]
  %Primer cubo
  \draw  [dotted](m-11-1) -- (m-12-2);
  \draw  [dotted](m-11-3) -- (m-12-2);
  \draw [dotted](m-11-2) -- (m-12-2);
  \draw [dotted](m-10-2) -- (m-11-1);
  \draw [dotted](m-10-1) -- (m-11-1);
  \draw [dotted](m-10-2) -- (m-11-3);
  \draw [dotted](m-10-3) -- (m-11-3);
  \draw [dotted](m-9-2) -- (m-10-2);
  \draw [dotted](m-9-2) -- (m-10-1);
  \draw [dotted](m-9-2) -- (m-10-3);
  \draw[dotted,on top] (m-10-1) -- (m-11-2);
  \draw[dotted,on top] (m-10-3) -- (m-11-2);
  %Segundo
  \draw [->](m-10-4) -- (m-11-5);
  \draw [dotted](m-10-6) -- (m-11-5);
  \draw [->] (m-10-5) -- (m-11-5);
  \draw [dotted](m-9-5) -- (m-10-4);
  \draw [dotted](m-9-4) -- (m-10-4);
  \draw [dotted](m-9-5) -- (m-10-6);
  \draw [dotted](m-9-6) -- (m-10-6);
  \draw [dotted](m-8-5) -- (m-9-5);
  \draw [dotted](m-8-5) -- (m-9-4);
  \draw [dotted] (m-8-5) -- (m-9-6);
  \draw[dotted,on top] (m-9-4) -- (m-10-5);
  \draw[dotted, on top] (m-9-6) -- (m-10-5);
  %Tercero
  \draw [dotted](m-10-7) -- (m-11-8);
  \draw [->](m-10-9) -- (m-11-8);
  \draw [->](m-10-8) -- (m-11-8);
  \draw [dotted](m-9-8) -- (m-10-7);
  \draw [dotted](m-9-7) -- (m-10-7);
  \draw [dotted](m-9-8) -- (m-10-9);
  \draw [dotted](m-9-9) -- (m-10-9);
  \draw[dotted] (m-8-8) -- (m-9-8);
  \draw [dotted](m-8-8) -- (m-9-7);
  \draw [dotted](m-8-8) -- (m-9-9);
  \draw[dotted,on top] (m-9-7) -- (m-10-8);
  \draw[dotted,on top] (m-9-9) -- (m-10-8);
  %Cuarto
  \draw [dotted](m-9-10) -- (m-10-11);
  \draw [dotted](m-9-12) -- (m-10-11);
  \draw [->] (m-9-11) -- (m-10-11);
  \draw [dotted] (m-8-11) -- (m-9-10);
  \draw [dotted](m-8-10) -- (m-9-10);
  \draw [dotted](m-8-11) -- (m-9-12);
  \draw [dotted](m-8-12) -- (m-9-12);
  \draw[dotted] (m-7-11) -- (m-8-11);
  \draw[dotted] (m-7-11) -- (m-8-10);
  \draw[dotted] (m-7-11) -- (m-8-12);
  \draw[dotted, on top] (m-8-10) -- (m-9-11);
  \draw[dotted,on top] (m-8-12) -- (m-9-11);

    %quinto cubo
    \draw [->](m-5-1) -- (m-6-2);
    \draw [->](m-5-3) -- (m-6-2);
    \draw [dotted](m-5-2) -- (m-6-2);
    \draw [dotted](m-4-2) -- (m-5-1);
    \draw [dotted](m-4-1) -- (m-5-1);
    \draw [dotted](m-4-2) -- (m-5-3);
    \draw [dotted](m-4-3) -- (m-5-3);
    \draw [dotted](m-3-2) -- (m-4-2);
    \draw [dotted](m-3-2) -- (m-4-1);
    \draw [dotted](m-3-2) -- (m-4-3);
    \draw[dotted, on top] (m-4-1) -- (m-5-2);
    \draw[dotted, on top] (m-4-3) -- (m-5-2);
    %sexto
    \draw [->](m-4-4) -- (m-5-5);
    \draw [dotted](m-4-6) -- (m-5-5);
    \draw [dotted](m-4-5) -- (m-5-5);
    \draw [dotted](m-3-5) -- (m-4-4);
    \draw[dotted] (m-3-4) -- (m-4-4);
    \draw[dotted] (m-3-5) -- (m-4-6);
    \draw [dotted](m-3-6) -- (m-4-6);
    \draw [dotted](m-2-5) -- (m-3-5);
    \draw [dotted](m-2-5) -- (m-3-4);
    \draw[dotted] (m-2-5) -- (m-3-6);
    \draw[dotted, on top] (m-3-4) -- (m-4-5);
    \draw[dotted, on top] (m-3-6) -- (m-4-5);
    %septimo
    \draw [dotted](m-4-7) -- (m-5-8);
    \draw [->](m-4-9) -- (m-5-8);
    \draw [dotted](m-4-8) -- (m-5-8);
    \draw [dotted](m-3-8) -- (m-4-7);
    \draw[dotted] (m-3-7) -- (m-4-7);
    \draw[dotted] (m-3-8) -- (m-4-9);
    \draw [dotted](m-3-9) -- (m-4-9);
    \draw [dotted](m-2-8) -- (m-3-8);
    \draw [dotted](m-2-8) -- (m-3-7);
    \draw[dotted] (m-2-8) -- (m-3-9);
    \draw[dotted, on top] (m-3-7) -- (m-4-8);
    \draw[dotted,on top] (m-3-9) -- (m-4-8);
    %octavo
    \draw [dotted](m-3-10) -- (m-4-11);
    \draw [dotted](m-3-12) -- (m-4-11);
    \draw [dotted](m-3-11) -- (m-4-11);
    \draw [dotted](m-2-11) -- (m-3-10);
    \draw [dotted](m-2-10) -- (m-3-10);
    \draw [dotted] (m-2-11) -- (m-3-12);
    \draw [dotted](m-2-12) -- (m-3-12);
    \draw [dotted](m-1-11) -- (m-2-11);
    \draw [dotted](m-1-11) -- (m-2-10);
    \draw[dotted] (m-1-11) -- (m-2-12);
    \draw[dotted, on top] (m-2-10) -- (m-3-11);
    \draw[dotted,on top] (m-2-12) -- (m-3-11);
  \end{scope}
\end{tikzpicture} 
\end{example}

\vskip 2mm

In general, given a monomial ideal $I\subseteq R$ we may apply the pruning algorithm to the subcomplex $\mathbb{L}^r_q \subseteq X_{{\tt Taylor}(I^r)}$ to compute a cellular resolution of $I^r$.

 \vskip 2mm
 
 \begin{algorithm}\label{alg3} 

\vskip 2mm

{\rm \noindent {\sc Input:} The set of edges $E_{\mathbb{L}^r_q}$. 

%\noindent {\sc Output:} The set of pruned edges $\cA_P \subseteq E_{X_{\tt Taylor}}$ \phil{por que poner 'OUTPUT' y luego 'RETURN'
%con la misma informacion en todos los algoritmos?}

\vskip 2mm

Apply Algorithm \ref{alg1} to  $E_{\mathbb{L}^r_q}$.

\vskip 2mm

\noindent {\sc Return:} The set $\cA_L$ of edges that have been pruned.

}

\end{algorithm}

We interpret this algorithm as a partial pruning. In particular, $\cA_L$ is a homogeneous acyclic matching on $E_{\mathbb{L}^r_q}$ and thus, 
 
 \begin{theorem}\label{C3}
Let $I \subseteq R$ be a monomial ideal and
$\cA_L \subseteq E_{\mathbb{L}^r_q}$ be the set of pruned edges obtained using Algorithm \ref{alg3}.
Then, the $\bZ^n$-graded CW-complex $(X_{\cA_L},gr)$
supports a cellular free resolution $\mathbb{F}_{\bullet}^{(X_{\cA_L},gr)}$ of $R/I^r$.
\end{theorem}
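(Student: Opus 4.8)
The plan is to realize $\cA_L$ as a homogeneous acyclic matching and then quote Theorem \ref{Morse_resolution} with $\mathbb{L}^r_q$ playing the role of the ambient complex. First I would record that $\mathbb{L}^r_q$, being simplicial, is a regular $\bZ^n$-graded CW-complex, and that by \cite[Theorem 5.9]{FaridiSimplicial} it already supports a cellular free resolution $\mathbb{F}_{\bullet}^{(\mathbb{L}^r_q,gr)}$ of $R/I^r$. Thus everything reduces to checking that the output $\cA_L$ of Algorithm \ref{alg3} (that is, of Algorithm \ref{alg1} applied to $E_{\mathbb{L}^r_q}$) is a homogeneous acyclic matching on $G_{\mathbb{L}^r_q}$; Theorem \ref{Morse_resolution} then delivers the cellular resolution $\mathbb{F}_{\bullet}^{(X_{\cA_L},gr)}$.

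Two of the three required properties I expect to be routine, and proved exactly as for the full Taylor complex in \cite{AMFG20} (cf. Theorem \ref{C1}), since those arguments are local to the pruning rule and never use that the ambient complex is the full simplex. The matching property holds because pruning an edge $\sigma \lra \sigma+\varepsilon_j$ simultaneously deletes its two endpoints and all edges through them, so no cell can lie in two pruned edges; homogeneity holds because step $(j)$ of Algorithm \ref{alg1} prunes $\sigma \lra \sigma+\varepsilon_j$ only when $gr(\sigma)=gr(\sigma+\varepsilon_j)$. I would also note that $G_{\mathbb{L}^r_q}$ is the full subgraph of $G_{X_{{\tt Taylor}(I^r)}}$ induced on the cells of $\mathbb{L}^r_q$, as a subcomplex is closed under faces.

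The hard part is acyclicity, and the crux of the whole statement is that the acyclicity argument is insensitive to replacing the Taylor complex by the subcomplex $\mathbb{L}^r_q$. Suppose $G_{\mathbb{L}^r_q}^{\cA_L}$ had a directed cycle. I would first use homogeneity to see that all its cells carry one common multidegree: reversed (matched) edges preserve $gr$, forward edges can only lower $gr$ componentwise, and the net change around a cycle is zero, so every forward edge is grade-preserving too and all the $m_\sigma$ along the cycle agree. Then let $j_0$ be the smallest coordinate direction appearing in any edge of the cycle. If $j_0$ appears in a forward edge $\tau \lra \tau-\varepsilon_{j_0}$, then $(\tau-\varepsilon_{j_0},\tau)$ is the only direction-$j_0$ edge meeting either endpoint; by minimality of $j_0$ no cell of the cycle is matched in a direction $<j_0$, and since cells disappear only upon being matched, both endpoints survive to the start of step $(j_0)$, where their grades agree, so Algorithm \ref{alg1} is forced to prune this edge, contradicting that it is unmatched. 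If instead $j_0$ appears only in reversed edges $\sigma \Rightarrow \sigma+\varepsilon_{j_0}$, then the $j_0$-th coordinate strictly increases at each of them and is untouched by every other edge, so it cannot return to its initial value, contradicting closure of the cycle. Either way the cycle cannot exist.

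The reason this settles the theorem is that every cell and edge used in the contradiction lies on the cycle, hence inside $\mathbb{L}^r_q$; the edges of $X_{{\tt Taylor}(I^r)}$ missing from $\mathbb{L}^r_q$ never intervene, which is precisely what legitimizes interpreting Algorithm \ref{alg3} as a partial pruning in the sense of Section \ref{Sec2}. I expect the only real subtlety to be this last point, namely articulating that the minimal-direction argument of \cite{AMFG20} survives the restriction to $\mathbb{L}^r_q$, after which Theorem \ref{Morse_resolution} applies verbatim and yields the cellular free resolution $\mathbb{F}_{\bullet}^{(X_{\cA_L},gr)}$ of $R/I^r$.
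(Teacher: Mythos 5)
Your proposal follows the same route as the paper's own proof, which is essentially a one-liner: interpret Algorithm \ref{alg3} as a partial pruning in the sense of Section \ref{Sec2}, so that $\cA_L$ is a homogeneous acyclic matching on $E_{\mathbb{L}^r_q}$, and then combine \cite[Theorem 5.9]{FaridiSimplicial} (the complex $\mathbb{L}^r_q$ supports a cellular resolution of $R/I^r$) with Theorem \ref{Morse_resolution}. Your reduction, your verification of the matching property, and your verification of homogeneity all coincide with this; the only real difference is that you re-prove acyclicity directly instead of quoting \cite{AMFG20}, which is a legitimate choice since the paper leaves that verification to the earlier work.

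However, one step of your acyclicity argument has a genuine hole as written. In your first case you assert that ``by minimality of $j_0$ no cell of the cycle is matched in a direction $<j_0$.'' Minimality of $j_0$ only constrains the directions of edges \emph{lying on the cycle}; a priori, a cell of the cycle whose two incident cycle edges are both forward could be matched by an edge of $\cA_L$ that does not lie on the cycle, and that edge could have direction $<j_0$. In that scenario the endpoint of your direction-$j_0$ forward edge is removed before step $(j_0)$ of Algorithm \ref{alg1}, the algorithm is not forced to prune anything, and no contradiction arises. What closes the hole is the standard alternation fact: in any directed cycle of $G_{\mathbb{L}^r_q}^{\cA_L}$ the reversed and forward edges strictly alternate, because no two reversed edges can be consecutive (each cell lies in at most one matched edge), the numbers of up-steps and down-steps around the cycle are equal (dimension returns to its initial value), and in a cyclic sequence these two conditions force alternation. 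Consequently every cell of the cycle is matched by a cycle edge, and only then does minimality of $j_0$ yield your claim; one should also note that the matched cycle edges at the two endpoints cannot themselves have direction $j_0$, since that would produce a two-cycle or violate the matching property, so both endpoints are matched in directions strictly larger than $j_0$ and survive to step $(j_0)$. With this lemma inserted, your Case 1 goes through (including the within-step subtlety, since an endpoint pruned at step $(j_0)$ by a competing edge would be matched twice), and your grading argument and Case 2 are correct as written.
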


%The minimality of such cellular resolution is defined as follows:

\begin{definition}
Let $I \subseteq R$ be a monomial ideal. We say that the $r$-th power $I^r$ {\it admits a minimal pruned resolution} if there exists an order of the generators of $I^r$ such that the pruned free resolution that we obtain in Theorem \ref{C3} is minimal
\end{definition}

\begin{example}
 Consider the squarefree monomial ideal $I=(xy,xz,yz)\subseteq R=\KK[x,y,z]$. The Betti tables of the free resolutions $\mathbb{F}_{\bullet}^{(\mathbb{L}^r_q,gr)}$ and $\mathbb{F}_{\bullet}^{(X_{\cA_L},gr)}$ are respectively

 \vskip 2mm
\begin{verbatim}
         0  1  2  3                    0  1  2  3  
  total: 1  6  9  4             total: 1  6  6  1 
      0: 1  .  .  .                 0: 1  .  .  .  
      1: .  .  .  .                 1: .  .  .  .  
      2: .  .  .  .                 2: .  .  .  .  
      3: .  6  9  4                 3: .  6  6  1  
\end{verbatim}
 \vskip 2mm
 In particular, the pruned resolution we obtain using Algorithm \ref{alg3} is minimal.
\end{example}

%
%
%\subsection{Comparing with other cellular resolutions}
%
%\begin{itemize}
%\item Lyubeznik resolution
%\item Simplicial pruned resolution
%\item (generalized) Barile-Macchia resolution
%\item Iterative mapping cone
%\item Torrente-Varbaro resolution
%\item Eliahou-Kervaire resolution
%\item Sinefakopoulos resolution
%\item Others? 
%\end{itemize}
%

\section{Pruned resolution of monomial ideals and Betti splittings} \label{Sec2_2}

Francisco, H\`a and Van Tuyl  \cite{FTVT2}  introduced the notion of \textit{Betti splittings} building upon previous work of Eliahou and Kervaire \cite{EK}. In this section, we will apply this concept to present a version of the pruned resolution that is well-suited for inductive processes, as we will show in Section \ref{Sec6}. 

\begin{definition}
We say that $I=J+K$ is a {\it Betti splitting} for the monomial ideal $I$ if
the following formula for the $\bZ^n$-graded Betti numbers is satisfied:
 $$\beta_{i,\alpha}(I)= \beta_{i,\alpha}(J)+\beta_{i,\alpha}(K)+\beta_{i-1,\alpha}(J\cap
K)\,.$$
\end{definition}

Betti splitting techniques can be used recursively to compute Betti numbers of monomial ideals. However, such decomposition may be difficult to find or may not even exist.  For  squarefree monomials, a topological interpretation of Betti splittings is provided in \cite{BU}.
Francisco, H\`a and Van Tuyl  \cite[Corollary 2.4]{FTVT2} proved that we have a Betti splitting when $J$ and $K$ have a linear resolution. Bolognini \cite[Theorem 3.3]{Bolognini} extended this result  to the case where $J$ and $K$ are componentwise linear ideals.

\vskip 2mm

In \cite{FTVT2} they also studied the question whether a $x_i$-partition is a Betti splitting, which they refer as $x_i$-splitting. A $x_i$-partition $I=J+K$ is when $J$ is the ideal generated by all minimal generators of $I$ divisible by $x_i$, and $K$ is the ideal generated by all other generators of $I$.  When $I\subseteq R$ is the edge ideal of a simple graph then we always have a $x_i$-splitting.

\vskip 2mm

 We  give a mild generalization of \cite[Theorem 3.3]{Bolognini} for the case of $x_i$-partitions.

\vskip 2mm
\begin{lemma}\label{L} Let $J=(m_1,\dots,m_q)\subseteq R$ be a componentwise linear ideal minimally generated in degrees $d_1,\ldots,d_q$.
Then, $\beta_{i,i+j}(J)\neq 0$ if and only if $j=d_1,\ldots,d_q$.
\end{lemma}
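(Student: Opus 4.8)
The statement splits into two implications, of which only one carries content. The backward direction is immediate: if $j=d_\ell$ for some $\ell$, then $J$ has a minimal generator of degree $j$, so $\beta_{0,0+j}(J)=\beta_{0,j}(J)\neq 0$, and thus the row $j$ is nonzero. The whole substance lies in the forward (``only if'') implication, which I would phrase contrapositively: \emph{if $j\notin\{d_1,\dots,d_q\}$, then $\beta_{i,i+j}(J)=0$ for every $i$.} Equivalently, the Betti table of a componentwise linear ideal has nonzero entries only in the rows indexed by its generator degrees.

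The tool I would use is the standard description, due to Herzog and Hibi, of the Betti numbers of a componentwise linear ideal in terms of its graded pieces. For each degree $d$ write $J_{\langle d\rangle}$ for the ideal generated by the $\KK$-vector space $J_d$. By definition of componentwise linearity each $J_{\langle d\rangle}$ has a linear resolution, and since $\m L$ has a linear resolution whenever $L$ does, the ideal $\m J_{\langle d-1\rangle}$ is also generated in a single degree with a linear resolution. In this situation one has the formula
\[
\beta_{i,i+j}(J)=\beta_{i,i+j}\bigl(J_{\langle j\rangle}\bigr)-\beta_{i,i+j}\bigl(\m J_{\langle j-1\rangle}\bigr)\qquad\text{for all }i,j.
\]
Both ideals on the right are generated in the single degree $j$, so their only possibly nonzero Betti numbers sit in the $j$-th strand, and the formula expresses the $j$-th strand of $J$ as the difference of these two.

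The combinatorial heart is the observation that $J_{\langle j\rangle}$ and $\m J_{\langle j-1\rangle}$ coincide exactly when $j$ is not a generator degree. Indeed, both are generated in degree $j$, with degree-$j$ components $(J_{\langle j\rangle})_j=J_j$ and $(\m J_{\langle j-1\rangle})_j=R_1J_{j-1}$ (writing $R_1=\langle x_1,\dots,x_n\rangle_1$); since two ideals generated in degree $j$ are equal as soon as their degree-$j$ pieces agree, we get $J_{\langle j\rangle}=\m J_{\langle j-1\rangle}$ if and only if $J_j=R_1J_{j-1}$. The latter is precisely the statement that $J$ has no minimal generator in degree $j$, because the number of minimal generators in degree $j$ equals $\dim_\KK\bigl(J_j/R_1J_{j-1}\bigr)=\beta_{0,j}(J)$. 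Hence, for $j\notin\{d_1,\dots,d_q\}$ the two ideals agree and the right-hand side of the formula vanishes for every $i$, giving $\beta_{i,i+j}(J)=0$ and completing the proof.

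The step carrying all the weight is the Herzog--Hibi formula itself; everything after it is bookkeeping. I would cite it directly. Its proof rests on the fact that an ideal with a linear resolution concentrates $\operatorname{Tor}_i(\KK,-)$ in the single internal degree $i+d$, so that in the minimal resolution of a componentwise linear ideal the distinct linear strands do not interfere and contributions coming from different degrees decouple. This decoupling of the strands, which is exactly where componentwise linearity is used, is the delicate input and would be the main obstacle in a fully self-contained treatment. An alternative route passes to the reverse-lexicographic generic initial ideal and reads off the support of the Betti numbers from the Eliahou--Kervaire formula, where it is manifestly concentrated on generator degrees; however, that argument is less transparent in positive characteristic, so I would prefer the strand/formula approach above.
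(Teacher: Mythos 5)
Your proof is correct, but it takes a genuinely different route from the paper's. The paper orders the generator degrees $d_1\le\cdots\le d_q$, splits off $J_1=J_{(d_1)}$ (which has a $d_1$-linear resolution), passes to $J/J_1$, and argues inductively, following the proof of the paper's reference [RS, Proposition 2.2], that $\operatorname{Tor}_i(\KK,J)_{i+j}$ decomposes as a direct sum $\bigoplus_{r}\operatorname{Tor}_i(\KK,J_r)_{i+j}$ with each $J_r$ having a $d_r$-linear resolution; vanishing outside the generator degrees is then read off summand by summand. You instead invoke the Herzog--Hibi formula $\beta_{i,i+j}(J)=\beta_{i,i+j}\bigl(J_{\langle j\rangle}\bigr)-\beta_{i,i+j}\bigl(\m J_{\langle j-1\rangle}\bigr)$ and observe that the two equigenerated ideals on the right coincide exactly when $j$ is not a generator degree, via graded Nakayama ($\beta_{0,j}(J)=\dim_\KK J_j/R_1J_{j-1}$). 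Both arguments ultimately rest on the same fact---Tor of an ideal with linear resolution lives in a single strand---but your route trades the paper's induction and Tor-splitting for one standard citation plus an elementary identity of ideals, which is shorter and self-contained in its bookkeeping; the paper's route yields the stronger structural statement that the linear strands of $J$ decouple as direct summands of Tor, which is closer in spirit to how it then gets used. A minor point in your favor: for the easy direction the paper cites [RS, Proposition 2.2] to conclude $\beta_{i,i+j}(J)\neq 0$ whenever $j$ is a generator degree, which taken literally for all $i$ is too strong (it fails beyond the projective dimension), whereas you only claim $\beta_{0,j}(J)\neq 0$, which is exactly what the intended row-wise reading of the lemma requires.
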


\begin{proof}
By applying \cite[Proposition 2.2]{RS} if $j=d_1,\ldots,d_q$, then $\beta_{i,i+j}(J)\neq 0$. Conversely, suppose $\beta_{i,i+j}(J)\neq 0$ and consider $d_1\leq d_2\leq\ldots\leq d_q$. Set $J_1=J_{(d_1)}$. Therefore $J_1$ has $d_1$-linear resolution, and so $\beta_{i,i+j}(J_1)\neq 0$ if and only if $j=d_1$. Now, we consider $I=J/J_1$. Then $I$ is a componentwise linear module minimally generated in degrees $d_2,\ldots, d_m$. Therefore, by using the same proof of \cite[Proposition 2.2]{RS}
we have $Tor_i(\KK, J)_{i+j}=\oplus_{r=1}^t Tor_i(\KK,J_r)_{i+j}$, where $J_r$ has $d_r$-linear resolution for $r=1,\ldots,q$. In addition, we have $Tor_i(\KK,J_r)_{i+j}\neq 0$ if and only if $j=d_r$ for $r=1,\ldots,q$. Hence, if $\beta_{i,i+j}(J)\neq 0$, then $j=d_1,\ldots,d_q$, as required.
\end{proof}

%We consider only in following theorem the standared multigrading, that is, $\deg x_i$ is the i-th unit vector $(0,\ldots,0,1,0,\ldots,0)$. The proof of the other results are in the $\alpha$ multigraded case. Also, in the following result we apply an interesting result of R\"omer \cite[Theorem 3.2.8]{Ro} which is proved that a homogeneous ideal $I$ is componentwise linear if and only if $ld(I)=0$, where $ld(I)$   is {\it linearity defect} of $I$. 

\begin{theorem}\label{T}
Let $I=J +K$ be an $x_i$-partition of $I\subseteq R$ in which all elements of $J$ are divisible by $x_i$. If the minimal graded free resolution of $J$ is componentwise linear, then $I=J+K$ is a Betti splitting.
\end{theorem}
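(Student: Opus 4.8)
The plan is to verify the Betti splitting formula through the criterion of Francisco, H\`a and Van Tuyl \cite{FTVT2}: for the short exact sequence $0 \to J\cap K \to J\oplus K \to I \to 0$, the decomposition $I=J+K$ is a Betti splitting provided that the maps $\operatorname{Tor}^R_p(\KK,J\cap K)_\alpha \to \operatorname{Tor}^R_p(\KK,J)_\alpha$ and $\operatorname{Tor}^R_p(\KK,J\cap K)_\alpha \to \operatorname{Tor}^R_p(\KK,K)_\alpha$ induced by the inclusions vanish for every homological degree $p$ and every multidegree $\alpha\in\ZZ^n$ (equivalently, the associated long exact sequence in $\operatorname{Tor}$ breaks into short exact sequences). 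So I would reduce the theorem to the vanishing of these two families of maps and treat them separately, since only the second requires the componentwise linear hypothesis.

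For the map into $K$, I would argue purely by multidegree support, using nothing beyond the shape of the partition. Every minimal generator of $J\cap K$ lies in $J$, hence is divisible by $x_i$; since the multigraded Betti numbers of a monomial ideal are supported on the lcm lattice of its generators, $\operatorname{Tor}^R_p(\KK,J\cap K)_\alpha\neq 0$ forces $\mathbf{x}^\alpha$ to be divisible by $x_i$, i.e. $\alpha_i\geq 1$. Dually, no minimal generator of $K$ is divisible by $x_i$, so $\operatorname{Tor}^R_p(\KK,K)_\alpha\neq 0$ forces $\alpha_i=0$. As the comparison map preserves $\alpha$, its source and target are never simultaneously nonzero, and the map is identically zero.

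The map into $J$ is the heart of the matter: here both modules are supported in multidegrees with $\alpha_i\geq 1$, so the support trick is unavailable and the componentwise linear hypothesis must intervene. Writing $d_1\leq\cdots\leq d_q$ for the generating degrees of $J$, Lemma \ref{L} gives $\operatorname{Tor}^R_p(\KK,J)_{p+j}\neq 0$ only when $j\in\{d_1,\dots,d_q\}$, so it suffices to show the comparison map vanishes on each such linear strand. The starting observation is that $J\cap K\subseteq \m J$: no minimal generator of $J$ can belong to $K$, for otherwise it would be divisible by a minimal generator of $K$ and hence fail to be a minimal generator of $I=J+K$, contradicting $G(I)=G(J)\sqcup G(K)$. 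This already kills the degree-zero comparison map $\operatorname{Tor}^R_0(\KK,J\cap K)\to\operatorname{Tor}^R_0(\KK,J)$. To propagate the vanishing to all $p$, I would pass to the linear strands of $J$ in the style of R\"omer--Sharifan \cite{RS} (as already invoked in Lemma \ref{L}): restricting to the strand $d_k$, the relevant part of $J$ is the subideal generated by its degree-$d_k$ elements, which carries a $d_k$-linear resolution, and one shows that the contribution of $J\cap K$ to this strand maps to zero exactly as in the linear-resolution case underlying Bolognini's argument \cite{Bolognini}. Lemma \ref{L} then guarantees that distinct strands do not interfere, so assembling them yields the vanishing of the full map.

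With both families of comparison maps shown to be zero, the long exact sequence of $0\to J\cap K\to J\oplus K\to I\to 0$ splits into the short exact sequences $0\to \operatorname{Tor}^R_p(\KK,J)_\alpha\oplus\operatorname{Tor}^R_p(\KK,K)_\alpha\to\operatorname{Tor}^R_p(\KK,I)_\alpha\to\operatorname{Tor}^R_{p-1}(\KK,J\cap K)_\alpha\to 0$, which is precisely the Betti splitting formula $\beta_{p,\alpha}(I)=\beta_{p,\alpha}(J)+\beta_{p,\alpha}(K)+\beta_{p-1,\alpha}(J\cap K)$. I expect the main obstacle to be the third paragraph: although the degree-zero vanishing of the map into $J$ is immediate from $J\cap K\subseteq\m J$, upgrading it to all homological degrees genuinely uses the componentwise linear structure of $J$ and is where the proof parallels and mildly extends Bolognini's theorem.
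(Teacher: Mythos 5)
Your overall strategy coincides with the paper's: reduce via the Francisco--H\`a--Van Tuyl criterion to the vanishing of the comparison maps, kill the map into $K$ by the multidegree support argument ($\alpha_i\geq 1$ versus $\alpha_i=0$), and kill the map into $J$ in homological degree zero using $J\cap K\subseteq \mathfrak{m}J$. The gap sits exactly where you predicted: the passage from $\operatorname{Tor}_0$ to all $\operatorname{Tor}_p$, and your strand-by-strand plan does not close it. The decomposition $\operatorname{Tor}_p(\KK,J)_{p+j}=\bigoplus_r\operatorname{Tor}_p(\KK,J_r)_{p+j}$ from Lemma \ref{L} is an abstract direct-sum decomposition of the \emph{target}; the component of the comparison map landing in the strand of $J_r$ is not induced by an inclusion of ideals (indeed $J\cap K\not\subseteq J_r$ in general), so there is no ``linear-resolution case'' to quote for it. Moreover, the degree-support argument that underlies the linear case only disposes of the lowest strand: from $J\cap K\subseteq\mathfrak{m}J$ one gets $\operatorname{Tor}_p(\KK,J\cap K)_{p+j}=0$ for $j\leq d_1$, which kills the component into $J_1$; but for a higher strand $d_r>d_1$ both $\operatorname{Tor}_p(\KK,J\cap K)_{p+d_r}$ and $\operatorname{Tor}_p(\KK,J_r)_{p+d_r}$ can be nonzero, since $J\cap K$ may well have generators in degrees strictly between $d_1$ and $d_r$, so degree reasons force nothing. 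Lemma \ref{L} cannot repair this: it constrains the Betti numbers of $J$, not those of $J\cap K$, and the interference that must be excluded is precisely between $J\cap K$ and the higher strands of $J$.

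What is needed is an actual mechanism that propagates the $\operatorname{Tor}_0$ vanishing to all homological degrees, and this is what the paper imports: componentwise linear ideals have linearity defect zero \cite[Proposition 4.9]{YSq}, and then \cite[Lemma 2.8(ii)]{Nguyen} says that this, together with the vanishing of the $\operatorname{Tor}_0$ comparison map coming from $J\cap K\subseteq\mathfrak{m}J$, forces all the maps $\operatorname{Tor}_p(\KK,J\cap K)_\alpha\to\operatorname{Tor}_p(\KK,J)_\alpha$ to vanish. Your first two paragraphs are correct and match the paper; your third paragraph, as written, restates the difficulty rather than resolving it, and would need to be replaced either by the linearity-defect argument above or by a citation of the corresponding key lemma in Bolognini's work --- some structural input of this kind is unavoidable.
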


\begin{proof}
By \cite[Proposition 2.1]{FTVT2} we have to prove that the morphisms 
$$Tor_i(\KK,J\cap K)_\alpha\overset{{\psi}_{i,\alpha}}\longrightarrow 
 Tor_i(\KK,J)_\alpha\oplus Tor_i(\KK,K)_\alpha$$
are zero for all $\alpha=(\alpha_1,\ldots,\alpha_n)\in \ZZ^n$. Since all elements of $J$ and $J\cap K$ are divisible by $x_i$, it follows that all $\ZZ^n$-graded Betti numbers of $J$ and $J\cap K$ occur in degrees $\alpha\in \ZZ^n$  with $\alpha_i >0$ and none of the $\ZZ^n$-graded Betti numbers of $K$ do. In particular,  $\beta_{i,\alpha}(J\cap K)>0$ implies  $\beta_{i,\alpha}(K)=0$ for all $i$ and all $\ZZ^n$-degrees $\alpha$ so we only need to check the vanishing of the map $$Tor_i(\KK,J\cap K)_\alpha\longrightarrow Tor_i(\KK,J)_\alpha$$ 

Since $J\cap K\subseteq{\mathfrak{m} J}$, we have that the map $Tor_0(\KK,J\cap K)_\alpha\longrightarrow  Tor_0(\KK,J)_\alpha$ is zero.
The linearity defect of a componentwise linear ideal is zero \cite[Proposition 4.9]{YSq}, %\cite[Theorem 3.2.8]{Ro}, 
so we may use \cite[Lemma 2.8(ii)]{Nguyen} and conclude the vanishing of all morphisms ${\psi}_{i,\alpha}$.
\end{proof}

%\begin{proof}
%Since all elements of $J$ and $J\cap K$ are divisible by $x$, it follows that all multigraded Betti numbers of $J$ and $J\cap K$ occur in degrees divisible by $x$ and none of the multigraded Betti Numbers of $K$ do. Therefore  $\beta_{i,j}(J\cap K)>0$ implies that $\beta_{i,j}(K)=0$ for all $i$ and all multidegrees $j$. From \cite[Proposition 2.1]{FTVT2} we  consider the exact sequence 
% $$Tor_i(\KK,J\cap K)_j\overset{{\psi}_{i,j}}\longrightarrow 
% Tor_i(\KK,J)_j\oplus Tor_i(\KK,K)_j.$$ 
%If $Tor_i(\KK,J\cap K)_j=0$, then ${\psi}_{i,j}=0$. Now, we consider that $Tor_i(\KK,J\cap K)_j\neq 0$, then by the above explanation 
%$Tor_i(\KK,K)_j=0$. Also, from $J\cap K\subseteq{mJ}$, we obtain that the map $Tor_0(\KK,J\cap K)_j\longrightarrow  Tor_0(\KK,J)_j$ is zero. Thus by using \cite[Theorem 3.2.8]{Ro} and \cite[Lemma 2.8(ii)]{Nguyen} we can conclude that $$Tor_i(\KK,J\cap K)_j\longrightarrow Tor_i(\KK,J)_j$$ is a zero map for all $i,j$.

%Therefore ${\psi}_{i,j}=0$ for all $i,j$ and so by applying \cite[Proposition 2.1]{FTVT2} we conclude that $I=J+K$  is a Betti splitting, as required.
%\end{proof}}

\vskip 2mm

When we have a Betti splitting it is natural to apply the pruning Algorithm \ref{alg1} to the ideals $J$, $K$ and $J\cap K$ separately and then define the pruned Betti numbers of $I$ as:
 $$\overline{\beta}_{i,\alpha}(I):= \overline{\beta}_{i,\alpha}(J)+\overline{\beta}_{i,\alpha}(K)+\overline{\beta}_{i-1,\alpha}(J\cap
K)\,.$$
We will show that this method gives a Morse resolution, in particular a cellular resolution, that starts with the Taylor complex $X_{\tt Taylor}$ of the ideal $I$.

 \vskip 2mm

 Let  $I=J+K$ be a  Betti splitting with $J=( m_1,\dots,m_s )$ and $K=
(m_{s+1},\dots,m_q)$. Notice that a (possibly non-minimal) set of generators of  $J\cap K$  is given by
 the monomials
 \begin{equation} \label{generators_intersection}
 \{ m_{1,s+1},\dots, m_{s,s+1},  m_{1,s+2}, \dots, m_{s,s+2},  \dots, m_{1,q}, \dots, m_{s,q}\}.
 \end{equation}
To compute the pruned
resolution of $J$ and $K$ we consider the subcomplexes:

\vskip 2mm

\begin{itemize}
 \item [$\cdot$] $X_J\subseteq X_{\tt Taylor}$  with faces labelled by
 $\sigma=(\sigma_1,\dots,\sigma_s,0,\dots,0) \in \{0,1\}^q$, and
 \item [$\cdot$] $X_K\subseteq X_{\tt Taylor}$  with faces labelled by
 $\sigma=(0,\dots,0,\sigma_{s+1},\dots,\sigma_q) \in \{0,1\}^q$.
\end{itemize}

\vskip 2mm

The set $X'$ obtained by removing the faces of
$X_J$ and $X_K$ from $X_{\tt Taylor}$
 is not a simplicial subcomplex of $X_{\tt Taylor}$ and
 it is not the Taylor simplicial complex associated to $J\cap K$. 
 It is proved in \cite{AMFG20} that, applying a partial pruning algorithm to $X_{J\cap K}$, we obtain  $X'$. 
 This shows that, even if the Taylor complex $X_{J\cap K}$ may be a large simplicial complex which is not a subcomplex of $X_{\tt Taylor}$, 
 we may apply the pruning Algorithm  \ref{alg1} within the edges  $E_{X'}\subseteq E_{X_{\tt Taylor}}$ in order to compute the pruned free resolution of $J\cap K$. Moreover, in the case where  the set of generators given in Equation \ref{generators_intersection} is not minimal, we may perform a partial pruning to eliminate the superfluous generators $m_{i,j}$ and the monomials $m_{i_1,\dots, i_s }$ containing $m_{i,j}$.

 \vskip 2mm
 
 \begin{algorithm}\label{alg2} 

\vskip 2mm

{\rm \noindent {\sc Input:} The set of edges $E_{X_{\tt Taylor}}$.

%\noindent {\sc Output:} The set of pruned edges $\cA_P \subseteq E_{X_{\tt Taylor}}$ \phil{por que poner 'OUTPUT' y luego 'RETURN'
%con la misma informacion en todos los algoritmos?}

\vskip 2mm

Apply the pruning Algorithm \ref{alg1} to the subset of edges  $E_{X_{J}}$, $E_{X_{K}}$ and $E_{X'}$ separately.

\vskip 2mm

\noindent {\sc Return:} The set $\cA_B$ of edges that have been pruned.

}

\end{algorithm}

We interpret this algorithm as a partial pruning. In particular, $\cA_B$ is a homogeneous acyclic matching on $X_{\tt Taylor}$ and thus, 
 
 \begin{theorem}\label{C2}
Let $I \subseteq R=\KK[x_1, \ldots, x_n]$ be a monomial ideal and
$\cA_B \subseteq E_{X_{\tt Taylor}}$ be the set of pruned edges obtained using Algorithm \ref{alg2}.
Then, the $\bZ^n$-graded CW-complex $(X_{\cA_B},gr)$
supports a cellular free resolution $\mathbb{F}_{\bullet}^{(X_{\cA_B},gr)}$ of $R/I$.
\end{theorem}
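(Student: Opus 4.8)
The plan is to verify that the set $\cA_B$ returned by Algorithm \ref{alg2} is a \emph{homogeneous acyclic matching} on the Taylor complex $X_{\tt Taylor}$ of $I$, and then to invoke Theorem \ref{Morse_resolution}: since $(X_{\tt Taylor},gr)$ is a regular $\bZ^n$-graded CW-complex supporting the (cellular) Taylor resolution of $R/I$, any homogeneous acyclic matching on $G_{X_{\tt Taylor}}$ yields a cellular free resolution $\mathbb{F}_{\bullet}^{(X_{\cA_B},gr)}$ of $R/I$. Thus the entire content of the statement is the three matching axioms for $\cA_B$.

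First I would classify the cells of $X_{\tt Taylor}$ by the support of $\sigma\in\{0,1\}^q$: a nonempty face is of \emph{type} $J$, $K$ or $M$ according to whether $\supp\sigma\subseteq\{1,\dots,s\}$, $\supp\sigma\subseteq\{s+1,\dots,q\}$, or $\supp\sigma$ meets both blocks. These three classes partition the nonempty faces, and the only face lying in both $X_J$ and $X_K$ is the empty face, which is never matched since $gr(\emptyset)=0\neq gr(\varepsilon_j)$ for every $j$. Because Algorithm \ref{alg2} runs Algorithm \ref{alg1} \emph{separately} on $E_{X_J}$, $E_{X_K}$ and $E_{X'}$, whose edges have endpoints of type $J$, type $K$ and type $M$ respectively, no cell is touched by two of the three runs; hence every cell lies in at most one edge of $\cA_B$, so $\cA_B$ is a matching. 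Homogeneity is immediate, since Algorithm \ref{alg1} prunes an edge $\sigma\lra\sigma+\varepsilon_j$ only when $gr(\sigma)=gr(\sigma+\varepsilon_j)$, so every edge of $\cA_B$ is homogeneous.

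The heart of the argument is acyclicity of $G_{X_{\tt Taylor}}^{\cA_B}$. The key observation is that types $J$ and $K$ are \emph{absorbing} for directed walks: a forward edge $\sigma\lra\sigma-\varepsilon_j$ only shrinks the support, so it never turns a type-$J$ (resp. type-$K$) face into a mixed one; and a reversed edge whose lower endpoint is type $J$ must have been produced by the run on $E_{X_J}$ (it cannot belong to $\cA_K$ or $\cA_M$, whose endpoints are type $K$ or $M$), hence it stays type $J$, and symmetrically for $K$. Consequently, once a directed path enters type $J$ it remains type $J$, and likewise for $K$, so every directed cycle is \emph{monochromatic}, confined to a single class. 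A cycle confined to type $J$ is a cycle of the induced subgraph on type-$J$ cells, which is exactly $G_{X_J}^{\cA_J}$ for the matching $\cA_J$ produced by Algorithm \ref{alg1} on $X_J=X_{{\tt Taylor}(J)}$; this is acyclic by Theorem \ref{C1} applied to $J$, a contradiction, and the same holds for type $K$.

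The remaining and genuinely delicate case is a cycle confined to the mixed class, i.e. to $X'$ with the reversed edges $\cA_M$ coming from the run on $E_{X'}$; here one cannot argue directly, because $X'$ is not a simplicial subcomplex of $X_{\tt Taylor}$. The plan is to transport the question to the Taylor complex $X_{J\cap K}$: by the result of \cite{AMFG20} recalled above, $X'$ is exactly what a partial pruning of $X_{J\cap K}$ produces, and under this identification the forward edges among mixed cells together with $\cA_M$ correspond to the full pruned matching of $X_{J\cap K}$ obtained by Algorithm \ref{alg1}, whose acyclicity is Theorem \ref{C1} applied to $J\cap K$; this excludes a mixed cycle as well. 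Having ruled out cycles in all three classes, $G_{X_{\tt Taylor}}^{\cA_B}$ is acyclic. I expect the main obstacle to be precisely this last step: making the identification $X'\leftrightarrow X_{J\cap K}$ from \cite{AMFG20} precise enough that it transports not merely the cell sets but the full directed-graph structure (both forward and reversed edges), so that acyclicity on $X_{J\cap K}$ genuinely implies acyclicity of the induced mixed subgraph. The monochromatic-cycle reduction via the absorbing property of types $J$ and $K$ is what makes the reduction to the three already-established acyclic matchings possible, and it is the step I would write most carefully.
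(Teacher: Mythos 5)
Your skeleton (show that $\cA_B$ is a homogeneous acyclic matching and then quote Theorem \ref{Morse_resolution}) is the same as the paper's, and your matching check, homogeneity check, and monochromatic-cycle reduction via the absorbing types $J$ and $K$ are all correct. The gap is exactly where you feared it: the mixed class. Under the identification of $X'$ with the critical cells of a partial pruning of $X_{J\cap K}$, a mixed cell $\sigma_J\sqcup\sigma_K$ corresponds to the complete bipartite set $\sigma_J\times\sigma_K$ of generators $m_{i,j}$ of $J\cap K$; consequently a forward Taylor edge of $I$ between mixed cells, say the one dropping $t\in\sigma_J$, deletes $|\sigma_K|$ generators of $J\cap K$ at once and is therefore \emph{not} an edge of the Taylor graph of $J\cap K$ at all --- it corresponds only to a gradient path through the partial matching on $X_{J\cap K}$. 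So the induced directed graph on the mixed cells (forward edges plus the reversed edges of $\cA_M$) is not the pruned Taylor graph of $J\cap K$, and ``acyclicity by Theorem \ref{C1} applied to $J\cap K$'' does not apply to it. Making your transport precise would require lifting every forward mixed edge to a directed path in the pruned graph of $X_{J\cap K}$ and lifting $\cA_M$ compatibly, so that a cycle pulls back to a cycle; that is precisely the iteration-of-discrete-Morse-theory problem that the introduction flags as the fundamental obstruction (the Morse complex need not be regular). As written, the hardest of your three cases is not proved.

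The paper's own proof sidesteps all of this in one line: since the three runs of Algorithm \ref{alg1} inside Algorithm \ref{alg2} touch pairwise disjoint sets of cells and each processes the directions $j=1,\dots,q$ in increasing order, their union $\cA_B$ is itself a single \emph{partial pruning} of $E_{X_{\tt Taylor}}$ for the ideal $I$: a run of Algorithm \ref{alg1} in which, at step $j$, one simply declines to prune any available edge that does not lie in $E_{X_J}\cup E_{X_K}\cup E_{X'}$. By \cite{AMFG20} (see item $iv)$ of the remark following Theorem \ref{C1}), every such partial pruning is a homogeneous acyclic matching, and Theorem \ref{Morse_resolution} then gives the cellular resolution. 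This observation also repairs --- and in fact renders unnecessary --- your case analysis: if you wish to keep the type decomposition, the clean way to settle the mixed case is to apply this same partial-pruning principle to the run on $E_{X'}\subseteq E_{X_{\tt Taylor}}$, staying inside the Taylor graph of $I$ throughout, rather than transporting the question to $X_{J\cap K}$.
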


%  Consider the Taylor complex  $X_{J\cap K}$
% associated to the (possibly non-minimal) set of generators of  $J\cap K$  given by
% the monomials
% $$\{ m_{1,s+1},\dots, m_{s,s+1},  m_{1,s+2}, \dots, m_{s,s+2},  \dots, m_{1,r}, \dots, m_{s,r}\},$$
%%$\{ m_{i,j} \hskip 2mm | \hskip 2mm  i=1,\dots,s \hskip 2mm , \hskip 2mm j=s+1,\dots, r \},$
% 
%The Taylor complex $X_{J\cap K}$ is the full simplicial complex on $sr$ vertices with faces labelled by
%$\sigma \in \{0,1\}^{sr}$. Notice that the  vertex $\varepsilon_{j}$  corresponds to the monomial
%$m_{\varepsilon_j}:=m_{i,k}$ if we have
%$j=ks+i$ for $k\in \{0,\dots, r-1\}$ and $i\in\{1,\dots, s\}$.  %In general, a face $\sigma$ corresponds to $m_\sigma:=\lcm (m_{\varepsilon_j} \hskip 2mm | \hskip 2mm \sigma_j = 1)$.

The strength of this method is to apply it iteratively as long as we can find Betti splittings for the corresponding ideals $J,K$ and $J\cap K$. Notice that we will be always doing partial pruning on the original Taylor complex $X_{\tt Taylor}$ and thus we will get an acyclic matching and a cellular free resolution. 
\vskip 2mm

\begin{definition}
Let $I \subseteq R$ be a monomial ideal. We say that it {\it admits a minimal pruned resolution} if there exists an order of the generators of $I$ such that the cellular resolutions obtained using the iterative version of Theorem \ref{C2} is minimal.
\end{definition}

The following example illustrates this method (see \cite[Example 5.9]{AMFG20}).

\begin{example}
Consider the edge ideal of a $9$-cycle $$I=(x_1x_2,x_2x_3,x_3x_4,x_4x_5,x_5x_6,x_6x_7,x_7x_8,x_8x_9,x_9x_1) \subseteq \KK[x_1,\dots,x_9].$$
%Applying Algorithm \ref{alg1} we do not obtain a minimal pruned resolution but applying iteratively Algorithm \ref{alg2} we do so.  
First we consider the $x_9$-splitting 
$$ J=(x_1x_2,x_2x_3,x_3x_4,x_4x_5,x_5x_6,x_6x_7,x_7x_8), \; K= (x_8x_9,x_9x_1).$$ 
The pruned resolutions of $J$ and $K$ obtained using Algorithm \ref{alg1} are minimal. 

\vskip 2mm 

A set of generators for $J\cap K$ is 
$ \{ m_{1,8},\dots, m_{7,8},  m_{1,9},\dots, m_{7,9}\}$ but we can remove $m_{1,8}=x_1x_2x_8x_9$ and $m_{7,9}=x_1x_7x_8x_9$ to make it minimal. Therefore we consider
$$J\cap K=
( \underbrace{x_2x_3x_{8}x_{9},\dots,x_{6}x_{7}x_{8}x_{9},
x_{7}x_{8}x_{9}}_{J'}, \underbrace{
x_1x_2x_{9},x_1x_3x_{4}x_{9},\dots,x_{1}x_{6}x_{7}x_{9}}_{K'}
)\,$$ 

Even though we removed some generators and, in consequence, the set $X' \subseteq X_{\tt Taylor}$ has been reduced, we do not get a minimal pruned resolution after applying Algorithm \ref{alg1} to $J\cap K$.
The iterative step here is to use a $x_1$-splitting $J\cap K= J'+K'$.
The pruned resolution of $J'$ and $K'$ are minimal. 

\vskip 2mm

 A
minimal set of generators for the intersection $J'\cap K'$ is
$$( x_1x_2x_3x_{8}x_9, \dots ,
x_1x_{6}x_{7}x_{8}x_9,x_{1}x_2x_{7}x_{8}x_9 ) = x_1x_8x_9 ( x_2x_3, \dots ,
x_{6}x_{7},x_2x_{7})$$ but
the pruning algorithm applied to this ideal is equivalent to the one
for the $6$-cycle $( x_2x_3,x_3x_{4}, \dots ,
x_{6}x_{7},x_2x_{7})$, which gives a minimal pruned resolution.
\end{example}

\section{Monomial ideals with minimal pruned resolution} \label{Sec6}

In this section we will use Betti splittings in an iterative way to obtain minimal pruned resolutions of several classes of monomial ideals.
The upshot of this method is to unify, under the same umbrella of pruned Morse resolutions, several classical known resolutions.

\subsection{Some classes of monomial ideals}
Let $R=\KK[x_1, \ldots, x_n]$ be a polynomial ring  over a field $\KK$ and let $I \subseteq R$ be a monomial ideal. We denote by $I_{(j)}$ the ideal generated by all homogeneous polynomial of degree $j$ belonging to $I$.  Let $G(I)=\{m_1,\dots, m_q\}$ be the set of minimal monomial generators of $I$ which we consider an ordered set. We will also consider the lexicographical order $>_{\rm lex}$ when comparing monomials.
For a  monomial  $m$, we set $max(m)=\max\{i \; | \; x_i$ divides $m\}$ and $min(m)=\min\{i \; | \; x_i$ divides $m\}$.

\begin{definition} \label{Def_ideals} Let $I\subseteq R$ be a monomial ideal. We say:
\begin{itemize}
 \item $I$ is a \textbf{lexsegment} ideal if for all monomials $m\in I$ and all monomials $m'$ with $\deg(m)=\deg(m')$ and $m' >_{\rm lex} m$ we have $m'\in I$.
% Let $\mathcal{M}_d$ be the set of all the monomials of $R$ of degree $d$. $I$ is a \textbf{lexsegment} ideal of $R$, when $I$ is generated by a \textbf{lexsegment} set $L(u,v)=\{w\in\mathcal{M}_d: u\geq_{lex}w\geq_{lex}v\}$, where $u\geq_{lex}v$ are two given monomials of $\mathcal{M}_d$, (see \cite{HM}).
 \item $I$ is a \textbf{strongly stable} ideal, if one has $x_i(m/{x_j})\in I$ for all $m\in G(I)$ and all $i<j$ such that $x_j$ divides $m$, (see \cite{H.1993}).
 \item $I$ is a \textbf{stable} ideal, if  $x_i(m/x_{max(m)})\in I$ for all $m\in G(I)$ and  all $i<max(m)$.
 \item  $I$ is a \textbf{linear quotients} ideal if the colon ideal $((m_1,\ldots,m_{i-1}):m_i)$ is a prime monomial ideal for every $2\leq i\leq q$. 
 \item $I$ is a \textbf{componentwise linear}  if $I_{(j)}$ has a linear resolution for all $j$. 
 %\item $I$ is a  \textbf{Componentwise polymatroidal} ideal if $I_{(j)}$ is a polymatroidal ideal for all $j$.

\end{itemize}
\end{definition}

For more details on the definitions and properties we refer to \cite{Bigatti}, \cite{HM-JA}, \cite{EK}, \cite{HerzogTakayama}, \cite{HHNagoya} (see also \cite{HHbook}). In particular we have the following hierarchy: 

\vskip 3mm

\begin{tikzcd}[ column sep=scriptsize, arrows=Rightarrow]
\text{Lexsegment} \arrow[r]  &\begin{tabular}{c}
Strongly \\
Stable
\end{tabular} \arrow[r] &
  \text{Stable} \arrow[r]   &
 \begin{tabular}{c}
Linear \\
Quotient
\end{tabular} \arrow[r]  & \begin{tabular}{c}
Componentwise \\
Linear
\end{tabular}  
\end{tikzcd}

%The first implication can be found in\cite{HHNagoya} and the second one also follows. For the third implication we refer to  and for the last implication we refer to \cite[Theorem 8.2.15]{HHbook}. 

\vskip 2mm

Let $p$ be either $0$ or a positive prime number. Given two nonnegative integers $a$ and $b$, we say that $a \preccurlyeq_p b$ if $\binom{b}{a} \neq 0$  mod $p$. Pardue \cite{Pardue} introduced the class of $p$-Borel monomial ideals.

\begin{definition}
Let $I\subseteq R$ be a monomial ideal where $\KK$ is an infinite field of any characteristic. Then

\begin{itemize}
  \item $I$ is a \textbf{$p$-Borel} fixed ideal, if for all $i<j$ and all $m\in G(I)$ one has $(x_i/{x_j})^a m\in I$ for all  $a \preccurlyeq_p b$ where $b$ is the largest integer such that $x_j^b$ divides $m$.

\end{itemize}
    
\end{definition}

Provided that the characteristic of the base field $\KK$ is zero, $0$-Borel ideals (or simply Borel ideals) coincide with the class of strongly stable ideals. This is no longer true if the characteristic is $p>0$. We point out that Betti numbers of stable ideals do not depend on the characteristic of the base field. However, Betti numbers of $p$-Borel ideals may depend on the characteristic \cite{CavigliaKummini}.

\vskip 2mm

%\begin{definition} Let $I\subseteq R$ be a monomial ideal.   Let \textbf{a}$=\{0\leq a_1 \leq \cdots \leq a_n\}$ be a sequence of integers or infinity. We saythat $I$ is an \textbf{a}- stable ideal , if one has $x_i(m/{x_j})\in I$ for all \textbf{a}-monomials $m$ and all $i<j$ such that $x_j$ divides $m$, (see \cite{H.1993}).
%\begin{itemize}
 %\item $I$ is a \textbf{a}-spread strongly stable ideal 
% \item $I$ is an \textbf{a}- stable ideal , if one has $x_i(m/{x_j})\in I$ for all $m\in G(I)$ and all $i<j$ such that $x_j$ divides $m$, (see \cite{H.1993}).
% \item $I$ is a \textbf{stable} ideal, if  $x_i(m/x_{max(m)})\in I$ for all $m\in G(I)$ and  all $i<max(m)$, (see \cite{EK}).

%\item {\it {\bf t}-spread strongly stable} monomial ideals are vertex splittable by \cite[Proposition 1]{CrupiFicarra}.
%\end{itemize}
%\end{definition}

\vskip 2mm

We will also include into the picture 
the following class of ideals introduced by Moradi and Khosh-Ahang \cite{MKH-Math.Scand}.

\begin{definition}
  Let $I\subseteq R$ be a monomial ideal. We say that $I$ is a \textbf{vertex splittable} ideal if it can be obtained using the following inductive process: 
\begin{itemize}
\item If either $I=(0)$, $I=R$ or $I=(m)$ where $m$ is a monomial.
  \item If there exists a variable $x_i$ and two vertex splittable ideals $J \subseteq R$ and $K \subseteq \KK[x_1,\dots,\hat{x_i},\ldots, x_n]$ such that $I=x_iJ+K$ with $K\subseteq J$ and $G(I)$ is the disjoint union of $G(x_iJ)$  and $G(K)$.
\end{itemize}
  
\end{definition}

\begin{example}
Examples of vertex splittable ideals include:

\begin{itemize}
\item {\it Cover ideals of chordal graphs} are  vertex splittable by \cite[Theorem 1]{W-Proc.AMS}. 

\vskip 2mm

\item {\it Cover ideals of sequentially Cohen-Macaulay bipartite graphs}  are vertex splittable by \cite[Theorem 2.10]{V-Arch.Math} and \cite[Theorem 2.3]{MKH-Math.Scand}.

\vskip 2mm

\item {\it Polymatroidal ideals}  are  monomial ideals generated in a single degree for which the set of the exponent vectors of the minimal  generators  is the set of bases of a discrete polymatroid (see \cite{HHPoly}). They are  vertex splittable by \cite[Lemma 2.1]{MNS-arXiv}. More generaly, 
componentwise polymatroidal ideals, meaning that $I_{(j)}$ is a polymatroidal ideal for all $j$, are vertex splittable  \cite[Proposition 2]{CrupiFicarra}.
\vskip 2mm

\item {\it {\bf t}-spread strongly stable}  ideals are vertex splittable by \cite[Proposition 1]{CrupiFicarra}.

%\item (weakly) Polymatroidal of degree less and equal 2, see \cite[Proposition 2.8]{MNS-arXiv}.  
%\item Cover ideal of Sequentially Cohen-Macaulay bipartite graph, see \cite[Theorem 2.10]{V} and \cite[Theorem 2.3]{MKH-Math.Scand}.
%\item Cover ideal of a chordal graph, see \cite[Theorem 1]{W-Proc.AMS}.

\vskip 2mm

\item Given $\beta\in \{0,1\}^n$ we denote $\mathfrak{p}_{\beta}=(x_i \, | \, \beta_i=1)$ the corresponding prime monomial ideal. Francisco and Van Tuyl \cite{FVTNagoya} considered  ideals 
$$I=\mathfrak{p}_{\alpha_1}^{a_1} \cap \cdots \cap \mathfrak{p}_{\alpha_r}^{a_r},$$ satisfying $\mathfrak{p}_{\alpha_i}+\mathfrak{p}_{\alpha_j}=\mathfrak{m}$ for all $i\neq j$.  They appear in the study of fat points and thetraedral curves. Combining \cite[Theorem 3.1]{FVTNagoya} and \cite[Proposition 1]{CrupiFicarra} we have that they are vertex splittable.
\end{itemize}
\end{example}

\vskip 2mm

\subsubsection{Vertex splittable ideals}
Moradi and Khosh-Ahang \cite{MKH-Math.Scand} proved that vertex splittable ideals are  linear quotients. They also proved that they admit a Betti splitting and thus they behave well with respect to inductive processes.  We prove the following:

\begin{theorem} \label{thm1}
Let $I\subseteq R$ be a stable monomial ideal. Then $I$ is vertex splittable.
\end{theorem}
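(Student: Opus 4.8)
The plan is to prove the statement by induction, exhibiting an explicit vertex splitting on the smallest variable that occurs. After discarding the variables dividing no minimal generator (which does not affect stability), we may assume that $x_1$ divides at least one element of $G(I)$. Set
\[
A=\{m\in G(I)\mid x_1 \text{ divides } m\},\qquad B=\{m\in G(I)\mid x_1\nmid m\},
\]
and define $J=(\{m/x_1 \mid m\in A\})$ and $K=(B)$. Since $x_1 J=(A)$ and $K=(B)$ we have $I=x_1 J+K$, and $K\subseteq\KK[x_2,\dots,x_n]$ by construction. Because distinct elements of $G(I)$ do not divide one another, the monomials $m/x_1$ with $m\in A$ are pairwise incomparable, so they are exactly the minimal generators of $J$; hence $G(x_1 J)=A$ and $G(I)=G(x_1 J)\sqcup G(K)$. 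To invoke the inductive clause of the definition of vertex splittable it then remains to verify the containment $K\subseteq J$ and that both $J$ and $K$ are again stable, so that the induction can proceed.

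The key point, and the place where stability is used essentially, is the containment $K\subseteq J$, which I would obtain from the claim that $x_1 m\in(A)$ for every $m\in B$. Fix such an $m$; since $x_1\nmid m$ we have $t:=\max(m)>1$, so stability applied to $m$ with $i=1<t$ gives $w:=x_1(m/x_t)\in I$. Now $w$ has the same degree as $m$ and is divisible by $x_1$. If a minimal generator $g\mid w$ had $x_1\nmid g$, then $g\mid m/x_t$, forcing $g\mid m$ and hence $g=m$ by minimality, which is impossible for degree reasons; therefore every minimal generator dividing $w$ is divisible by $x_1$, i.e. lies in $A$. As $w\mid x_1 m$, such a $g$ also divides $x_1 m$, and since $x_1$ divides $g$ exactly once it follows that $(g/x_1)\mid m$; hence $m\in J$. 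This establishes $K=(B)\subseteq J$, and I expect this divisibility argument to be the main obstacle.

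With $K\subseteq J$ in hand, stability of $K$ and $J$ follows. For $K$ one checks that $K=I\cap\KK[x_2,\dots,x_n]$, so stability of $I$ restricts to stability of $K$ in the smaller ring. For $J$, take $g'=m'/x_1\in G(J)$ with $m'\in A$ and, assuming $g'\neq 1$, note $\max(g')=\max(m')=:t'$; applying stability of $I$ to $m'$ and factoring out $x_1$ shows $x_1\cdot x_i(g'/x_{t'})\in I$ for $i<t'$, and analysing the minimal generator dividing this monomial, using $B\subseteq J$ in the case that this generator avoids $x_1$, yields $x_i(g'/x_{t'})\in J$. Finally I would run the induction on $D(I):=\sum_{m\in G(I)}\deg m$: the base cases $I=(0)$, $I=R$, and $I$ principal are vertex splittable by definition, while $D(J)=\sum_{m\in A}(\deg m-1)<D(I)$ and $D(K)=\sum_{m\in B}\deg m<D(I)$ (both strict because $A\neq\emptyset$), so $J$ and $K$ are stable ideals of strictly smaller invariant and the inductive hypothesis applies. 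The only delicate ordering issue is that stability of $J$ is proved using $K\subseteq J$, so the containment must be secured first.
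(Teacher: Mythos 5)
Your proof is correct and takes essentially the same route as the paper: an $x_1$-partition $I=x_1J+K$, the containment $K\subseteq J$ obtained by applying stability to each generator of $K$, and an induction after checking that $J$ and $K$ are again stable. If anything, your write-up is more careful than the paper's: you justify the key step that every minimal generator dividing $x_1(m/x_{\max(m)})$ must be divisible by $x_1$ (which the paper asserts implicitly when claiming $x_i m/x_{\max(m)}\in x_iJ$), and your induction on $\sum_{m\in G(I)}\deg m$ still strictly decreases in the edge case $K=(0)$, where the paper's induction on the number of generators does not.
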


\begin{proof}
We proceed by induction on the number of generators of the stable ideal $I$.
Consider a $x_i$-partition  $I=x_iJ +K$, where $J\subseteq R$ and $K\subseteq \KK[x_1,\dots,\hat{x_i},\ldots, x_n]$. 
By using the definition of stability, we have that $J, K$ are stable as well so by induction they are vertex splittable. Moreover, given $m\in K \subseteq I$, we have  $x_i(m/x_{max(m)})\in I$. In particular $x_i m/x_{max(m)}\in x_iJ$ and thus $ m \in J$.
Therefore $I$ is vertex splittable because $K\subseteq J$ and we also have that $G(I)$ is the disjoint union of $G(x_iJ)$  and $G(K)$.
\end{proof}

The first main result in this section is: 

\begin{theorem} \label{thm2}
Let $I\subseteq R$ be a vertex splittable monomial ideal. Then $I$ admits a minimal pruned resolution.
\end{theorem}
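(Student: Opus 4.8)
The plan is to argue by structural induction on the vertex-splittable construction of $I$. In the base cases the resolution is trivial ($I=(0)$ or $I=R$), or it is the minimal resolution $0\to R(-\deg m)\to R\to R/I\to 0$ of a principal ideal $I=(m)$; in every base case it contains no invertible entries and is therefore minimal. For the inductive step I would write $I=x_iJ+K$ exactly as in the definition, with $J,K$ vertex splittable and $K\subseteq J$, and assume by induction that $J$ and $K$ each admit a minimal pruned resolution.

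Two elementary observations carry the argument. First, multiplication by a variable preserves the property of admitting a minimal pruned resolution: if $L=(n_1,\dots,n_t)$, then for every $\sigma$ the least common multiple attached to $x_iL=(x_in_1,\dots,x_in_t)$ is $x_i\,m_\sigma$, so the equalities $gr(\sigma)=gr(\sigma+\varepsilon_j)$ that govern Algorithm \ref{alg1} are identical for $L$ and for $x_iL$. Hence the pruned resolution of $x_iL$ is that of $L$ shifted by $\deg x_i$, and minimality is inherited; in particular $x_iJ$ and $x_iK$ admit minimal pruned resolutions. Second, a direct monomial computation gives $(x_iJ)\cap K=x_iK$: the inclusion $x_iK\subseteq (x_iJ)\cap K$ is immediate from $K\subseteq J$ and from $K$ being an ideal, while if a monomial $u$ lies in $(x_iJ)\cap K$ then some minimal generator $n_k$ of $K$ divides $u$, and since $x_i\nmid n_k$ we get $n_k\mid u/x_i$, so $u/x_i\in K$ and $u\in x_iK$.

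With these inputs I would invoke the theorem of Moradi and Khosh-Ahang \cite{MKH-Math.Scand} that the $x_i$-partition $I=(x_iJ)+K$ of a vertex splittable ideal is a Betti splitting, and apply the iterative version of Theorem \ref{C2}. Its three constituent ideals are $x_iJ$, $K$ and $(x_iJ)\cap K=x_iK$, each of which admits a minimal pruned resolution by the previous paragraph and the induction hypothesis, i.e. $\overline{\beta}_{i,\alpha}=\beta_{i,\alpha}$ for each. By the construction underlying Algorithm \ref{alg2}, the pruned Betti numbers of $I$ satisfy
$$\overline{\beta}_{i,\alpha}(I)=\overline{\beta}_{i,\alpha}(x_iJ)+\overline{\beta}_{i,\alpha}(K)+\overline{\beta}_{i-1,\alpha}(x_iK),$$
and because $I=(x_iJ)+K$ is a Betti splitting the genuine Betti numbers obey the same identity with $\beta$ in place of $\overline{\beta}$. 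Substituting the equalities coming from the three minimal pieces then yields $\overline{\beta}_{i,\alpha}(I)=\beta_{i,\alpha}(I)$ for all $i$ and $\alpha$, which is precisely the minimality of the pruned resolution of $I$.

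The step requiring the most care is keeping the recursion inside a class to which Theorem \ref{C2} still applies: the Betti-splitting identity closes up the induction only because the intersection is again of the controllable form $x_iK$, and because the invariance under multiplication by $x_i$ lets us transfer minimality from $K$ to $x_iK$. Once these two structural facts are established, the numerical conclusion is automatic, and the fact that the blocks $E_{X_{x_iJ}}$, $E_{X_K}$ and $E_{X'}$ are pruned separately in Algorithm \ref{alg2} ensures that the three contributions decouple exactly as the displayed formula demands.
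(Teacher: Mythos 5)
Your proof is correct and follows essentially the same route as the paper: induction along the vertex-splittable structure, the identity $(x_iJ)\cap K=x_iK$, invariance of minimal pruned resolutions under multiplication by a monomial (the paper's Lemma \ref{L1}), and the Moradi--Khosh-Ahang Betti splitting fed into the iterative version of Theorem \ref{C2}. Your write-up actually supplies the details the paper leaves implicit, including the appeal to Lemma \ref{L1}, whose citation is missing (``by .'') in the paper's proof, and the numerical step $\overline{\beta}_{i,\alpha}(I)=\beta_{i,\alpha}(I)$ that converts the splitting identity into minimality.
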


\begin{proof}
We proceed by induction on the number of generators of the  vertex splittable ideal $I$. There exists a variable $x_i$ and two vertex splittable ideals $J \subseteq R$ and $K \subseteq \KK[x_1,\dots,\hat{x_i},\ldots, x_n]$ such that $I=x_iJ+K$ with $K\subseteq J$ and $G(I)$ is the disjoint union of $G(x_iJ)$ and $G(K)$. By induction, $J$ and $K$ admit a minimal pruned resolution. On the other hand 
$x_iJ\cap K=x_iK$  and thus it also admits a minimal pruned resolution by . Therefore we can conclude that $I$ admits a minimal pruned resolution.
\end{proof}

Since stable monomial ideals are vertex splittable, we can include the Eliahou-Kervaire resolution in the framework of pruned resolutions.

\begin{corollary} \label{CorEK}
Let $I\subseteq R$ be a stable monomial ideal. Then $I$ admits a minimal pruned resolution which is isomorphic to the Eliahou-Kervaire resolution. %In particular, the Eliahou-Kervaire resolution of a stable ideal is cellular.
\end{corollary}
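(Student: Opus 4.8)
The plan is to deduce the statement in two moves: first obtain the existence of a minimal pruned resolution by chaining the two preceding theorems, and then identify it with the Eliahou--Kervaire resolution via uniqueness of minimal free resolutions. For the first move I would invoke Theorem \ref{thm1}, which makes a stable ideal $I$ vertex splittable, followed by Theorem \ref{thm2}, which produces a minimal pruned resolution $\mathbb{F}_{\bullet}^{(X_{\cA_B},gr)}$ of $R/I$ for any vertex splittable ideal. This already yields that $I$ admits a minimal pruned resolution.

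For the second move I would recall that Eliahou and Kervaire \cite{EK} proved their resolution to be a minimal $\ZZ^n$-graded free resolution of $R/I$ for every stable $I$. Since a finitely generated graded $R$-module has a minimal graded free resolution that is unique up to isomorphism of complexes, the pruned resolution and the Eliahou--Kervaire resolution --- both minimal graded free resolutions of the same module $R/I$ --- are necessarily isomorphic. In this form the statement is essentially immediate once existence is in hand, and the minimality established in Theorem \ref{thm2} is exactly what is needed to trigger the uniqueness argument.

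If one wants the isomorphism exhibited at the level of the combinatorial data rather than merely asserted abstractly, the next step would be to match bases explicitly. The Eliahou--Kervaire complex carries, in homological degree $i$, one basis element for each admissible symbol $(u,S)$ with $u\in G(I)$ and $S\subseteq\{1,\dots,\max(u)-1\}$, $|S|=i$, of multidegree $\mathbf{x}^S u$. I would track the $\cA_B$-critical cells through the $x_i$-splitting recursion underlying Theorem \ref{thm2} (built on the iterative version of Theorem \ref{C2}) and verify that the survivors are in bijection with these symbols, with matching multidegrees; this recursion runs parallel to the presentation of the Eliahou--Kervaire resolution as iterated mapping cones given by Charalambous and Evans \cite{ChaEv}.

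Because the multigraded Betti numbers of $R/I$ are intrinsic, matching the multidegrees of the critical cells with those of the admissible symbols is enough, and no direct comparison of differentials is required. The only place requiring care --- and the main, albeit modest, obstacle --- is this bookkeeping: confirming that the pruning recursion reproduces exactly the admissible symbols $(u,S)$ in each homological degree. Everything else is forced by minimality together with the uniqueness of minimal free resolutions.
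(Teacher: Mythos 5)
Your proposal is correct and follows essentially the same route as the paper: the paper's (implicit) proof is precisely the chain Theorem \ref{thm1} $\Rightarrow$ Theorem \ref{thm2} to get existence of a minimal pruned resolution, followed by the uniqueness of minimal graded free resolutions to identify it with the Eliahou--Kervaire resolution of \cite{EK}. The explicit combinatorial matching of critical cells with admissible symbols that you sketch as an optional third step is not carried out in the paper either, and, as you correctly note, it is not needed once minimality and uniqueness are in hand.
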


%{\cb \begin{lemma}
%Let $I\subseteq R$ be an ideal admitting a minimal pruned resolution. Then, the colon ideal $I:x_i^\infty$ also admits a minimal pruned resolution.
%\end{lemma}

%\begin{proof}
% Assume that $I=(x_i^{a_1}m_1,x_i^{a_2}m_2,\ldots,x_i^{a_t}m_t,n_1,\ldots,n_s)$, where  $x_i\notin\supp(n_j)$ for $j=1,\ldots,s$. Then, $I:x_i^\infty=(m_1,m_2,\ldots,m_t,n_1,\ldots,n_s)$. 

% Since $I$ has a minimal prune resolution, by applying \cite[Algorithm 3.1]{AMFG20} we deduce that the ideal $(m_1,m_2,\ldots,m_t,n_1,\ldots,n_s)$ admits a minimal pruned resolution. Now, by using \cite[Lemma 3.6]{AMFG20} we immediately obtain that $I:x_i^\infty$ admits a minimal pruned resolution. 

%{\crd This proof is not correct. $m's$ and $n's$ may be pruned in the Algorithm for $I:x_i^\infty$ so checking minimality would be more difficult. Do we need this lemma?}
 
%\end{proof}

\subsubsection{Linear quotients ideals}

The second main result in this section deals with the case of linear quotients. We start with the following technical result:
\begin{lemma}\label{L1}
Let $I\subseteq R$ be an ideal admitting a minimal pruned resolution. Then, $mI$ also admits a minimal pruned resolution for each monomial $m\in R$. 
\end{lemma}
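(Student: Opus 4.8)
The plan is to show that multiplication by $m$ merely shifts the entire multigraded structure of the pruned resolution by the exponent vector $\alpha_m \in \ZZ_{\geq 0}^n$ of $m=\mathbf{x}^{\alpha_m}$, while leaving all combinatorial data of Algorithm \ref{alg1} untouched. First I would fix an order $m_1,\dots,m_q$ of the generators of $I$ for which the pruned resolution of Theorem \ref{C1} is minimal. Since $mm_i$ divides $mm_j$ if and only if $m_i$ divides $m_j$, the set $\{mm_1,\dots,mm_q\}$ is a minimal system of generators of $mI$, and I would order it in the same way.

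The key observation is the identity $\operatorname{lcm}(mm_i \mid i\in\sigma)=m\cdot\operatorname{lcm}(m_i\mid i\in\sigma)$ for every $\sigma\in\{0,1\}^q$. Writing $m_\sigma$ and $m'_\sigma$ for the lcm-monomials attached to $I$ and $mI$ respectively, this gives $m'_\sigma=m\cdot m_\sigma$, hence $gr'(\sigma)=gr(\sigma)+\alpha_m$. In particular the two Taylor complexes $X_{\tt Taylor}$ are literally the same simplicial complex on $q$ vertices, with the same partial order and the same incidence coefficients $[\sigma:\sigma']$.

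Next I would run Algorithm \ref{alg1} on both ideals in parallel. The algorithm prunes the edge $\sigma\to\sigma+\varepsilon_j$ precisely when its two endpoints carry equal gradings. Since $gr'(\sigma)=gr'(\sigma+\varepsilon_j)$ holds if and only if $gr(\sigma)=gr(\sigma+\varepsilon_j)$, the shift by $\alpha_m$ preserves every such equality, so the algorithm prunes exactly the same edges in the same order. Therefore the pruning matchings $\cA_P$ agree, the $\cA_P$-critical cells agree, and the CW-complexes $X_{\cA_P}$ coincide.

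Finally, the differential of the pruned resolution in Equation \ref{differential_Morse} has entries $\mathbf{x}^{gr(\sigma)-gr(\sigma'')}$ weighted by coefficients $m(\cP)$ that depend only on the matching and on the $[\sigma:\sigma']$. As $gr'(\sigma)-gr'(\sigma'')=gr(\sigma)-gr(\sigma'')$, these entries are identical; hence the pruned resolution of $mI$ is obtained from that of $I$ by the shift $R(-\alpha)\mapsto R(-\alpha-\alpha_m)$. An entry is a unit exactly when its monomial exponent vanishes and its scalar coefficient is nonzero, a condition unaffected by the shift, so the minimality of the pruned resolution of $I$ transfers verbatim to $mI$. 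The argument presents no genuine obstacle; the only point requiring care is to record explicitly that Algorithm \ref{alg1} tests only grading equalities, which are manifestly invariant under a global multidegree shift.
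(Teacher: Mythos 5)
Your proof is correct and takes essentially the same approach as the paper: the paper's entire proof is the one-line observation that an edge can be pruned in Algorithm \ref{alg1} for $I$ if and only if the corresponding edge can be pruned for $mI$, which is exactly your key point that $\operatorname{lcm}(mm_i \mid i \in \sigma) = m \cdot \operatorname{lcm}(m_i \mid i \in \sigma)$ and that the algorithm only tests grading equalities, invariant under the global shift by the exponent of $m$. Your write-up merely makes explicit the details (minimality of the shifted generating set, identical matchings, critical cells, and differentials) that the paper leaves implicit.
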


\begin{proof}
We only need to notice that an edge can be pruned when applying Algorithm \ref{alg1} to the ideal $I$ if and only if the corresponding edge in $mI$ can also be pruned.
%Suppose that $I=(m_1,\ldots,m_t)$ has a minimal pruned resolution. By applying \cite[Algorithm 3.1]{AMFG20} we deduce that $mI$ has a minimal pruned resolution for each monomial $m$. 
\end{proof}

\vskip 2mm

\begin{theorem} \label{thm3}
Let $I\subseteq R$ be a linear quotients monomial ideal. Then $I$ admits a minimal pruned resolution.
\end{theorem}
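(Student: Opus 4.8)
The plan is to induct on the number $q$ of minimal generators of $I$, peeling off the last generator and recognizing the resulting decomposition as a Betti splitting to which the iterative construction of Theorem \ref{C2} can be applied. The base case $q=1$ is a principal ideal, which trivially admits a minimal pruned resolution. For the inductive step I would set $J=(m_1,\dots,m_{q-1})$ and $K=(m_q)$, so that $I=J+K$. Since the colon ideals $((m_1,\dots,m_{i-1}):m_i)$ for $2\le i\le q-1$ are unchanged, $J$ again has linear quotients with respect to the inherited order, and by the inductive hypothesis $J$ admits a minimal pruned resolution; the principal ideal $K$ trivially does.

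Next I would identify the intersection. For a principal ideal one has $J\cap K=m_q\,(J:m_q)$, and by the linear quotients hypothesis $(J:m_q)=((m_1,\dots,m_{q-1}):m_q)=\mathfrak p$ is a monomial prime, i.e. $\mathfrak p=(x_{j_1},\dots,x_{j_t})$ is generated by a subset of the variables. Such a $\mathfrak p$ is vertex splittable: the decomposition $\mathfrak p=x_{j_1}R+(x_{j_2},\dots,x_{j_t})$ exhibits it in the required inductive form. Hence $\mathfrak p$ admits a minimal pruned resolution by Theorem \ref{thm2}, and Lemma \ref{L1} then shows that $J\cap K=m_q\mathfrak p$ admits a minimal pruned resolution as well. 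At this point all three pieces $J$, $K$ and $J\cap K$ admit minimal pruned resolutions.

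It then remains to verify that $I=J+K$ is genuinely a Betti splitting, for only then does Theorem \ref{C2} apply. Since $J$ has linear quotients it is componentwise linear, and $K=(m_q)$ is componentwise linear because it is principal; I would therefore invoke Bolognini \cite{Bolognini} to conclude that $I=J+K$ is a Betti splitting. Alternatively, one can argue directly in the spirit of the proof of Theorem \ref{T}: because $m_q$ is a minimal generator of $I$, no minimal generator $m_r$ of $J$ is divisible by $m_q$, so each generator $m_qx_{j_l}$ of $J\cap K$ is a \emph{proper} multiple of some $m_r$, whence $J\cap K\subseteq \mathfrak m J$; together with the vanishing of the linearity defect of the componentwise linear ideal $J$, this forces all the connecting maps $\psi_{i,\alpha}$ to vanish.

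Finally I would assemble the pieces. The iterative construction of Theorem \ref{C2} produces a cellular resolution of $R/I$ whose pruned Betti numbers satisfy, by construction of Algorithm \ref{alg2}, the additivity $\overline\beta_{i,\alpha}(I)=\overline\beta_{i,\alpha}(J)+\overline\beta_{i,\alpha}(K)+\overline\beta_{i-1,\alpha}(J\cap K)$. Because each of the three pieces admits a minimal pruned resolution, one has $\overline\beta=\beta$ for each of them, and combining this with the Betti splitting equality $\beta_{i,\alpha}(I)=\beta_{i,\alpha}(J)+\beta_{i,\alpha}(K)+\beta_{i-1,\alpha}(J\cap K)$ gives $\overline\beta_{i,\alpha}(I)=\beta_{i,\alpha}(I)$, so the pruned resolution of $I$ is minimal. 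I expect the main obstacle to be the Betti splitting step: one must be sure the hypotheses (componentwise linearity of $J$, or the direct vanishing of the connecting $\mathrm{Tor}$-maps) really hold for the peeled decomposition, since the additivity of the pruned Betti numbers holds for any partition of the generators and by itself does \emph{not} yield minimality.
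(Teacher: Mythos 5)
Your proof is correct and follows essentially the same route as the paper's: induction on $q$, peeling off $(m_q)$, invoking Bolognini's componentwise-linear criterion to get the Betti splitting, identifying $J\cap K=m_q\,\mathfrak{p}$ via the linear-quotients hypothesis, and applying Lemma \ref{L1}. You simply supply details the paper leaves implicit (why $\mathfrak{p}$ itself admits a minimal pruned resolution, and the final additivity-of-pruned-Betti-numbers argument), so there is nothing to fix.
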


\begin{proof}
We proceed by induction on the number of generators  of  $I=(m_1,\ldots,m_q)$ with the base case $q=2$ being clear. The decomposition $I=(m_1,\ldots, m_{q-1})+(m_q)$ gives a Betti splitting because 
 the ideals $(m_1,\ldots, m_{q-1})$ and $(m_q)$ are linear quotients, and thus componentwise linear, monomial ideals so we can apply \cite[Theorem 3.3]{Bolognini}. We have $(m_1,\ldots, m_{q-1})\cap(m_q)= m_q ((m_1,\ldots, m_{q-1}):m_q)= m_q \, \mathfrak{p}$, for some monomial prime ideal $\mathfrak{p}$, so it admits a minimal pruned resolution because we can apply Lemma \ref{L1} since $\mathfrak{p}$ admits a minimal pruned resolution. Therefore $I$ admits a minimal pruned resolution. 
\end{proof}

We point out that we can also include the Herzog-Takayama resolution in the framework of pruned resolutions.

\begin{corollary} \label{CorHT}
Let $I\subseteq R$ be a linear quotients monomial ideal. Then $I$ admits a minimal pruned resolution which is isomorphic to the Herzog-Takayama resolution. %In particular, the Herzog-Takayama resolution of a linear quotients ideal is cellular. 
\end{corollary}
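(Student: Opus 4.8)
The plan is to align the inductive construction of the pruned resolution from Theorem \ref{thm3} with the iterated mapping cone construction of Herzog and Takayama, and thereby identify the two complexes step by step. First I would fix an ordering $m_1,\dots,m_q$ of the generators realizing the linear quotients property and use this \emph{same} ordering for the pruning, writing $I_j=(m_1,\dots,m_j)$ and $\mathfrak{p}_j=((m_1,\dots,m_{j-1}):m_j)$. By definition $\mathfrak{p}_j$ is a monomial prime, hence generated by variables, so it is resolved by a minimal Koszul complex. Recall that the Herzog-Takayama resolution of $R/I$ is assembled from the short exact sequences $0\to (R/\mathfrak{p}_j)(-\deg m_j)\to R/I_{j-1}\to R/I_j\to 0$ by taking iterated mapping cones, and that its minimality for the linear-quotients ordering is exactly what is proved in \cite{HerzogTakayama}.

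Next I would match each mapping cone step with one step of the iterative Betti splitting $I_j=I_{j-1}+(m_j)$ used in the proof of Theorem \ref{thm3}. Since $I_{j-1}\cap(m_j)=m_j\,\mathfrak{p}_j$, Lemma \ref{L1} shows that the pruned resolution of this intersection is the Koszul resolution of $\mathfrak{p}_j$ shifted by $m_j$; together with the fact that the principal ideal $(m_j)$ contributes only a single free generator, the pruned complex produced by Algorithm \ref{alg2} decomposes, in each homological degree, as the direct sum of the pruned module of $I_{j-1}$ and the shifted Koszul module of $m_j\,\mathfrak{p}_j$. These are precisely the two strands of the Herzog-Takayama mapping cone, so by induction the underlying $\ZZ^n$-graded free modules of the two resolutions coincide. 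To obtain a genuine isomorphism of complexes I would then verify that the Morse differential of Equation \ref{differential_Morse} restricts on these summands to the internal differentials (equal by induction) and to a connecting component computed by the Morse paths crossing from $X'$ into $X_J$, which should reproduce the comparison map $(R/\mathfrak{p}_j)(-\deg m_j)\to R/I_{j-1}$ lifting $1\mapsto m_j$.

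The main obstacle will be the differential rather than the ranks of the free modules: one must check that the signed sums of path weights $m(\cP)$ over Morse paths in $G_X^{\cA_B}$ assemble into the mapping cone comparison map without spurious coefficients. This tracking can, however, be bypassed entirely, since the corollary only asserts an isomorphism: the pruned resolution is minimal by Theorem \ref{thm3} and the Herzog-Takayama resolution is minimal by \cite{HerzogTakayama}, and any two minimal $\ZZ^n$-graded free resolutions of $R/I$ are isomorphic as complexes. The explicit step-by-step matching above is what upgrades this abstract uniqueness to the concrete realization of the Herzog-Takayama resolution inside the pruned framework, exactly paralleling the stable/Eliahou-Kervaire case of Corollary \ref{CorEK}.
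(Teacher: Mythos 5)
Your proposal is correct, and its decisive argument --- the pruned resolution is minimal by Theorem \ref{thm3}, the Herzog--Takayama resolution is minimal for linear quotient ideals by \cite{HerzogTakayama}, and any two minimal $\ZZ^n$-graded free resolutions of $R/I$ are isomorphic as complexes --- is exactly the paper's (implicit) justification, since the paper states the corollary as an immediate consequence of Theorem \ref{thm3} with no further argument. The explicit mapping-cone identification in your first two paragraphs is never actually completed (the verification that the Morse path weights assemble into the comparison maps is left open), but as you yourself observe, it can be bypassed entirely, so it does not affect the correctness of the proof.
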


 Componentwise linear ideals may not admit a minimal pruned resolution. The following example was suggested to us by Bolognini (see \cite{BU}).

\begin{example}
Let $I\subseteq \KK[x_1,\dots,x_{12}]$ be the Alexander dual ideal of a triangulation of the lens space $L(3,1)$. 
It admits a Betti splitting in any characteristic (see \cite[Example 6.7]{BU}). The Betti tables when $\KK=\QQ$ and $\ZZ/(3)$ are respectively:

\vskip 2mm

\begin{verbatim}
         0  1   2  3  4 5                   0  1   2  3  4 5
  total: 1 54 108 66 12 1            total: 1 54 108 67 13 1
      0: 1  .   .  .  . .                0: 1  .   .  .  . .
      1: .  .   .  .  . .                1: .  .   .  .  . .
      2: .  .   .  .  . .                2: .  .   .  .  . .
      3: .  .   .  .  . .                3: .  .   .  .  . .
      4: .  .   .  .  . .                4: .  .   .  .  . .
      5: .  .   .  .  . .                5: .  .   .  .  . .
      6: .  .   .  .  . .                6: .  .   .  .  . .
      7: . 54 108 66 12 1                7: . 54 108 66 12 1
                                         8: .  .   .  .  1 .
                                         9: .  .   .  1  . .
\end{verbatim}

Notice that it has a linear resolution over $\QQ$ so, in particular, it is componentwise linear. However it does not admit a minimal pruned resolution.  
\end{example}

\vskip 2mm

\subsubsection{A class of $p$-Borel ideals} Given a monomial ideal $J=(m_1, \dots, m_q)$ we denote $J^{[p^r]}=({m_1}^{p^r}, \dots, {m_q}^{p^r})$. J\"ollenbeck and Welker \cite{JollenWelker}
considered the following class of $p$-Borel ideals: 
$$I= J_1^{[p^{r_1}]} \cdots J_k^{[p^{r_k}]}$$
where $0\leq r_1 < \cdots < r_k$, $J_i$ are Borel ideals such that the degree of any minimal monomial generator $m\in G(J_i)$ is  $ < p^{r_{i+1}-r_i}$. In the case that each $J_i$ is a principal Borel ideal (see \cite[Definition 6.15]{JollenWelker}) they constructed a minimal cellular resolution of $I$ using discrete Morse theory. Our next result ensures the existence of a minimal cellular resolution for a larger class of ideals. 

\begin{theorem} \label{thm_product}
Let $I=(m_1,\dots,m_q)\subseteq R$ and $J=(n_1,\dots,n_q)\subseteq R$ be monomial ideals such that the degree in the variables $x_i$ of each monomial $m_j$ is strictly smaller than the degree in $x_i$ of each monomial $n_k$. If both $I$ and $J$ admit a minimal pruned resolution, then $IJ$ also admits a minimal pruned resolution.    
\end{theorem}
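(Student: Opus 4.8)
The plan is to run the pruning Algorithm \ref{alg1} directly on the Taylor complex of $IJ$, using a product order on the generators and reducing each stage to the prunings that already make the resolutions of $I$ and $J$ minimal. Fix orders of $G(I)=\{m_1,\dots,m_s\}$ and $G(J)=\{n_1,\dots,n_t\}$ realizing minimal pruned resolutions, and order the (not necessarily distinct) products $m_jn_k$ lexicographically by the pairs $(k,j)$, so that, following the iterative construction of the Taylor graph described in Section \ref{Sec2}, the graph of $IJ$ is obtained by duplicating in the $J$-direction the Taylor graph of $n_kI$ for each fixed $k$. First I would treat each $J$-block separately: within the block of index $k$ the generators $\{m_jn_k\}_j$ generate $n_kI$, so by Lemma \ref{L1} the $I$-directional pruning restricted to this block yields the minimal pruned resolution of $n_kI$, and this occurs uniformly in $k$ since the factor $n_k$ is constant on the block and shifts every lcm there by the same monomial.

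The decisive use of the hypothesis comes in the second, $J$-directional stage. The separation $\deg_{x_i}(m_j)<\deg_{x_i}(n_k)$ for all $i,j,k$ forces each $n_k$ to dominate every $m_j$ in every variable, so that when Algorithm \ref{alg1} compares the multidegree of a face before and after adjoining a generator in a new block, the comparison should be governed by the $n$-parts: adjoining $m_{j'}n_{k'}$ leaves the lcm unchanged exactly when the corresponding step in the pruning of $J$ leaves $\mathrm{lcm}(n_k\mid k\in\rho)$ unchanged. The aim is thus to identify the across-block matching with the acyclic matching $\cA_J$ that makes the pruned resolution of $J$ minimal, so that the surviving critical cells are indexed by pairs of critical cells of $I$ and of $J$; homogeneity and acyclicity of the combined matching would then follow because the two stages act in complementary directions and each is homogeneous and acyclic.

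The hard part will be the coincidences $m_{j_1}n_{k_1}=m_{j_2}n_{k_2}$ with $(j_1,k_1)\neq(j_2,k_2)$, which do occur even under the stated hypothesis (for instance $x\cdot x^2y^3=y\cdot x^3y^2$). These make the system $\{m_jn_k\}$ redundant, so the naive product matching is neither a matching on a genuine product complex nor minimal, and one must show that the extra pruning steps forced by such equalities remove exactly this redundancy. I would handle this through the minimality criterion of Section \ref{Minimal_1}: it suffices to verify that whenever two critical cells $\sigma''\in X^{(i-1)}_{\cA}$ and $\sigma\in X^{(i)}_{\cA}$ satisfy $m_{\sigma''}=m_{\sigma}$, every Morse path from $\sigma$ to $\sigma''$ contributes coefficient zero in the differential of Equation \ref{differential_Morse}. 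The degree separation would be used once more to argue that an equality of multidegrees between consecutive surviving critical cells propagates, along any such path, to a matching of the $I$-contributions and of the $J$-contributions, contradicting the minimality of the two factor prunings; establishing this decoupling of the multidegree comparisons in the presence of coincident products is, I expect, the principal obstacle, and is precisely where the separation hypothesis must do the work.
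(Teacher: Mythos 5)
Your strategy coincides with the paper's: the paper also runs a two\hyp{}stage pruning on the full product family $\{m_jn_k\}$, grouping by the $J$\hyp{}factor (it phrases this as attaching the Taylor graph of $I$ to each cell of the minimal pruned complex of $J$), and it justifies the cross\hyp{}block stage by exactly the decoupling you invoke, stated there as an unqualified equivalence: for faces of the Taylor complex of $IJ$,
\begin{equation*}
\mathrm{lcm}(m_{i_1}n_{j_1},\dots,m_{i_a}n_{j_a})=\mathrm{lcm}(m_{i_1}n_{j_1},\dots,m_{i_a}n_{j_a},m_{i_{a+1}}n_{j_{a+1}})
\end{equation*}
if and only if the analogous equalities hold separately for the $m$\hyp{}parts and for the $n$\hyp{}parts. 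So in terms of approach you are aligned with the paper; the issue is completeness.

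The gap is that you stop precisely where the work is: you declare the coincidences $m_{j_1}n_{k_1}=m_{j_2}n_{k_2}$ to be ``the principal obstacle'' and express the expectation that the separation hypothesis will handle them, but you never carry this out, so what you have is an outline rather than a proof. Moreover the obstacle is genuine, and your own example already refutes the decoupling claim on which both you and the paper rely. Take $I=(x,y)$, $J=(x^2y^3,x^3y^2)$ with $m_1=x$, $m_2=y$, $n_1=x^2y^3$, $n_2=x^3y^2$ (the separation hypothesis holds). For the face $\{m_1n_1,\,m_2n_2\}$ and the new vertex $m_1n_2$: since $m_1n_1=m_2n_2=x^3y^3$, adjoining $m_1n_2=x^4y^2$ changes the product lcm from $x^3y^3$ to $x^4y^3$, yet both factor lcms are unchanged because $m_1$ already lies in $\{m_1,m_2\}$ and $n_2$ already lies in $\{n_1,n_2\}$; so the ``if'' direction fails. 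The ``only if'' direction fails as well, through divisibility rather than equality: with $m_1=y$, $m_2=x$, $n_1=x^5y^2$, $n_2=x^3y^3$ (separation again holds), the product $m_2n_2=x^4y^3$ divides $m_1n_1=x^5y^3$, so the product lcm of $\{m_1n_1\}$ is unchanged upon adjoining it, while $m_2=x$ does not divide $m_1=y$. Consequently, in the presence of coincident or comparable products the two\hyp{}stage matching need not be homogeneous, its critical cells need not be pairs of critical cells of the factors, and minimality does not transfer formally; one must first pass to the minimal generating set of $IJ$ (pruning away repeated and divisible products, as the paper does for $J\cap K$ in Section \ref{Sec2_2}) and then prove the decoupling for the surviving faces. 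Neither your proposal nor, it should be said, the paper's own two\hyp{}line proof does this --- so the weakness you flagged honestly is not a defect of your plan alone but a real gap in the published argument, and closing it is exactly what a complete proof of Theorem \ref{thm_product} requires.
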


\begin{proof}
In the construction of the Taylor graph of the ideal $$IJ=(m_1n_1,\dots,m_qn_1, \dots , m_1n_r,\dots,m_qn_r)$$ we notice that $${\rm lcm}(m_{i_1}n_{j_1}, \dots , m_{i_a}n_{j_a}) = {\rm lcm}(m_{i_1}n_{j_1}, \dots , m_{i_a}n_{j_a}, m_{i_{a+1}}n_{j_{a+1}})$$ if and only if both equalities   
$${\rm lcm}(m_{i_1}, \dots , m_{i_a}) = {\rm lcm}(m_{i_1}, \dots , m_{i_a}, m_{i_{a+1}})$$
$${\rm lcm}(n_{j_1}, \dots , n_{j_a}) = {\rm lcm}(n_{j_1}, \dots , n_{j_a}, n_{j_{a+1}})$$ are satisfied. This is due to the degree condition on the generators that we impose. 

\vskip 2mm
Then, to each cell of the CW-complex associated to the minimal pruned resolution of $J=(n_1,\dots,n_q)$ we may attach the Taylor graph of $I=(m_1,\dots,m_q)$ and perform the pruning Algorithm \ref{alg1}.  This will give a minimal pruned resolution of $IJ$.
\end{proof}

 %$(m_1n_i,\dots,m_qn_i)$ for $i=1,\ldots , r$ with the labeling of the cells conveniently adjusted to the extra $n_i$.   

%{\cb TO FINISH}

\begin{corollary} \label{pBorel}
Let $p>0$ be a prime integer and let $J_1,\dots, J_k\subseteq R$ be monomial ideals admitting a minimal pruned resolution. Then, the ideal   $$I= J_1^{[p^{r_1}]} \cdots J_k^{[p^{r_k}]}$$
where $0\leq r_1 < \cdots < r_k$, and the degree of any minimal monomial generator $m\in G(J_i)$ is  $ < p^{r_{i+1}-r_i}$, also admits a minimal pruned resolution.  
\end{corollary}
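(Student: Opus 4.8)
The plan is to deduce the statement from Theorem \ref{thm_product} by combining it with one preliminary observation: that the Frobenius-type operation $J\mapsto J^{[p^r]}$ preserves the property of admitting a minimal pruned resolution. Once that is in hand, the corollary follows by induction on the number $k$ of factors.

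First I would establish the preliminary fact, in the spirit of Lemma \ref{L1}. Writing $J=(m_1,\dots,m_q)$, the generators of $J^{[p^r]}$ are $m_1^{p^r},\dots,m_q^{p^r}$, and for every $\sigma\in\{0,1\}^q$ one has ${\rm lcm}(m_i^{p^r}\mid \sigma_i=1)=\big({\rm lcm}(m_i\mid \sigma_i=1)\big)^{p^r}$, since raising to the $p^r$-th power multiplies every exponent by $p^r$ and hence commutes with coordinatewise maxima. Consequently the Taylor complexes of $J$ and $J^{[p^r]}$ are both indexed by $\{0,1\}^q$ and their gradings satisfy $gr_{J^{[p^r]}}(\sigma)=p^r\, gr_J(\sigma)$. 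In particular $gr(\sigma)=gr(\sigma')$ holds for $J$ if and only if it holds for $J^{[p^r]}$, so Algorithm \ref{alg1} prunes exactly the same edges in both cases, and the coefficients governing the differentials in Equation \ref{differential_Morse} are identical. Hence $J^{[p^r]}$ admits a minimal pruned resolution whenever $J$ does; in fact its pruned resolution is obtained from that of $J$ by the substitution ${\bf x}^{\alpha}\mapsto {\bf x}^{p^r\alpha}$. Applied to each factor, this shows every $J_i^{[p^{r_i}]}$ admits a minimal pruned resolution, which is the base case $k=1$.

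For the inductive step I would write $I=J_1^{[p^{r_1}]}\cdot B$ with $B:=J_2^{[p^{r_2}]}\cdots J_k^{[p^{r_k}]}$. Since $B$ is again a product of the same shape, with the indices shifted and the inequalities $\deg m<p^{r_{i+1}-r_i}$ still in force for $i\ge 2$, the induction hypothesis gives that $B$ admits a minimal pruned resolution, while the preliminary fact handles $J_1^{[p^{r_1}]}$. It then remains to feed these two ideals into Theorem \ref{thm_product} and conclude that their product $I$ admits a minimal pruned resolution.

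The heart of the argument, and the step I expect to be the main obstacle, is verifying that $J_1^{[p^{r_1}]}$ and $B$ satisfy the degree hypothesis of Theorem \ref{thm_product}. Here the bound $\deg m<p^{r_{i+1}-r_i}$ for $m\in G(J_i)$ is exactly what is needed: it forces the exponent of each variable $x_\ell$ in any generator of $J_1^{[p^{r_1}]}$ to be $p^{r_1}\deg_{x_\ell}(m)<p^{r_1}\cdot p^{r_2-r_1}=p^{r_2}$, whereas every nonzero variable-exponent occurring in a generator $n_2^{p^{r_2}}\cdots n_k^{p^{r_k}}$ of $B$ is a sum $\sum_{i\ge 2}p^{r_i}\deg_{x_\ell}(n_i)$ of multiples of $p^{r_2},p^{r_3},\dots$, hence a positive multiple of $p^{r_2}$ and so $\ge p^{r_2}$. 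Thus, read in base $p$, the contributions of $J_1^{[p^{r_1}]}$ and of $B$ occupy disjoint blocks of digits, and this separation of variable-degrees is precisely the condition demanded by Theorem \ref{thm_product}. The delicate point to check with care is that this separation is inherited by the \emph{entire} accumulated product $B$, and not merely by a single consecutive pair of factors, so that the hypothesis of Theorem \ref{thm_product} genuinely applies at every stage of the induction; once this bookkeeping is confirmed, Theorem \ref{thm_product} yields a minimal pruned resolution of $I$ and closes the induction.
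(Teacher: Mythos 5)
Your proof is correct and follows essentially the same route as the paper: the paper's own proof consists precisely of the observation that $J^{[p^r]}$ inherits a minimal pruned resolution from $J$, followed by an iterative application of Theorem \ref{thm_product}. Your write-up simply supplies the details the paper leaves implicit, namely the lcm-scaling argument behind the Frobenius-power observation and the base-$p$ digit separation verifying the degree hypothesis of Theorem \ref{thm_product} at each step of the induction.
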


\begin{proof}
Notice that if $J$ admits a minimal pruned resolution, that $J^{[p^r]}$ also admits a minimal pruned resolution. Then the result follows applying Theorem \ref{thm_product} iteratively.    
\end{proof}

\vskip 2mm

The same kind of arguments in the proof of 
Theorem \ref{thm_product} also work in the following situation.

\begin{theorem} \label{thm_product_2}
Let $I=(m_1,\dots,m_q)\subseteq R$ and $J=(n_1,\dots,n_q)\subseteq R$ be monomial ideals in different sets of variables. If both $I$ and $J$ admit a minimal pruned resolution, then $IJ$ also admits a minimal pruned resolution.    
\end{theorem}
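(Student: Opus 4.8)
The plan is to mirror the proof of Theorem \ref{thm_product}, replacing the strict degree hypothesis by the disjointness of the variable sets, which yields an even cleaner factorization of least common multiples. Write $I\subseteq \KK[\mathbf{x}]$ and $J\subseteq \KK[\mathbf{y}]$ where $\mathbf{x}$ and $\mathbf{y}$ are disjoint sets of variables, and first observe that $IJ$ is minimally generated by the products $\{m_i n_j\}$: since $\mathbf{x}$ and $\mathbf{y}$ share no variable, a divisibility $m_i n_j \mid m_{i'} n_{j'}$ forces $m_i \mid m_{i'}$ and $n_j \mid n_{j'}$ separately, and minimality of $G(I)$ and $G(J)$ then gives $i=i'$ and $j=j'$.

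Next I would establish the key combinatorial identity. Because the $\mathbf{x}$-variables and the $\mathbf{y}$-variables do not interact in the least common multiple, for any choice of indices one has
$$
{\rm lcm}(m_{i_1}n_{j_1}, \dots, m_{i_a}n_{j_a}) = {\rm lcm}(m_{i_1},\dots,m_{i_a})\cdot {\rm lcm}(n_{j_1},\dots,n_{j_a}).
$$
Consequently the multidegree of a face of $X_{{\tt Taylor}(IJ)}$ splits as a product of an $\mathbf{x}$-multidegree, the lcm of the corresponding $m_i$'s, and a $\mathbf{y}$-multidegree, the lcm of the corresponding $n_j$'s. In particular, adjoining a generator $m_{i_{a+1}}n_{j_{a+1}}$ leaves the lcm unchanged if and only if it leaves both the $\mathbf{x}$-part and the $\mathbf{y}$-part unchanged, which is exactly the dichotomy exploited in Theorem \ref{thm_product}.

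With this in hand the construction is identical to that of Theorem \ref{thm_product}: starting from the minimal pruned resolution of $J$ supplied by hypothesis, I would attach to each of its critical cells a copy of the Taylor graph of $I$ and run the pruning Algorithm \ref{alg1} on the resulting graph. The factorization above guarantees that this produces a homogeneous acyclic matching on $X_{{\tt Taylor}(IJ)}$, so that Theorem \ref{C1} yields a cellular free resolution of $R/IJ$. For minimality I would argue that two surviving cells can share a multidegree only when their $\mathbf{x}$-parts and $\mathbf{y}$-parts agree separately; since the pruned resolutions of $I$ and $J$ are both minimal, no such coincidence survives among critical cells, whence there is no path contributing a nonzero differential coefficient between cells of equal multidegree.

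The main obstacle I anticipate is bookkeeping rather than conceptual: one must verify that the pruning order on the attached Taylor copies can be arranged so that the global matching is acyclic and homogeneous — not merely factor by factor — and that the critical cells of $IJ$ are in bijection with the products of critical cells of the two factor resolutions. The clean multidegree factorization renders both points essentially formal, so the argument carries over verbatim from Theorem \ref{thm_product}, with the disjointness of variables now playing the role previously played by the strict degree inequalities.
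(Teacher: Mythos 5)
Your proposal is correct and takes essentially the same approach as the paper: in fact the paper gives no separate proof of Theorem \ref{thm_product_2}, saying only that ``the same kind of arguments in the proof of Theorem \ref{thm_product} also work,'' and your argument is precisely that adaptation, with the factorization ${\rm lcm}(m_{i_1}n_{j_1},\dots,m_{i_a}n_{j_a})={\rm lcm}(m_{i_1},\dots,m_{i_a})\cdot{\rm lcm}(n_{j_1},\dots,n_{j_a})$ over disjoint variable sets playing the role of the strict degree hypothesis. Your extra verifications (minimal generation of $IJ$ by the products $m_in_j$, and the lcm dichotomy for adjoining a generator) are exactly the details the paper leaves implicit.
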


\vskip 2mm

 %Some generalizations of the class of (strongly) stable may be found in the literature. 

\subsubsection{${\bf a}$-stable ideals} Let ${\bf a}=(a_1,\dots, a_n)\in \ZZ \cup \{\infty\}$. We say that $m=x_1^{\alpha_1}\cdots x_n^{\alpha_n}$ is an ${\bf a}$-monomial if $\alpha_i \leq a_i$ for $i=1,\dots, n$. We also say that a monomial ideal $I\subseteq R$ is an ${\bf a}$-ideal if it is minimally generated by ${\bf a}$-monomials. Gasharov, Hibi and Peeva \cite{GHPeeva} introduced the class of ${\bf a}$-stable ideals as a generalization of the usual stable ideals which corresponds to the case ${\bf a}=(\infty,\dots, \infty)$. The case ${\bf a}=(2,\dots, 2)$ corresponds to squarefree stable ideals considered in \cite{AHH}. 
A minimal free resolution of ${\bf a}$-stable ideals is given in \cite[Theorem 2.2]{GHPeeva}. The following result is an analogue in the setting of Morse resolutions.

\begin{proposition} \label{prop_GHP}
Let ${\bf a}=(a_1,\dots, a_n)\in \ZZ \cup \{\infty\}$ and $I\subseteq R$ be an ${\bf a}$-ideal. Let $J=I+(m_1,\dots, m_q) \subseteq R$ be an ideal where the minimal generators $m_i$ are not ${\bf a}$-monomials. If $J$ admits a minimal pruned resolution then $I$ also admits a minimal pruned resolution.  
\end{proposition}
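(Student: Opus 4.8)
The plan is to reduce the statement directly to Corollary \ref{prop_peeva}, which controls the behavior of minimal pruned resolutions under passing from an ideal to the subideal generated by those minimal generators dividing a fixed monomial. The entire task then becomes producing a single monomial $m$ for which $J_{\le m}$ is exactly $I$; once this is done, the conclusion is immediate.

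The first step is to record the elementary divisibility feature behind the notion of an $\mathbf a$-monomial. A monomial $x_1^{\alpha_1}\cdots x_n^{\alpha_n}$ is an $\mathbf a$-monomial precisely when $\alpha_i\le a_i$ for every $i$, and this property is closed under divisibility: any divisor of an $\mathbf a$-monomial is again an $\mathbf a$-monomial. Equivalently, a monomial that fails to be an $\mathbf a$-monomial cannot divide any $\mathbf a$-monomial. This is the exact analogue of the divisibility obstruction exploited in the proof of Corollary \ref{prop_peeva}, and it is what prevents any pruning interaction between the $\mathbf a$-generators and the remaining generators $m_1,\dots,m_q$.

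The second step is to build the test monomial. Let $N$ be at least as large as the degree of any variable occurring in any minimal generator of $J$, and set
$$ m = \prod_{a_i<\infty} x_i^{a_i}\ \prod_{a_i=\infty} x_i^{N}. $$
I would then verify that, among the minimal generators $G(J)=G(I)\sqcup\{m_1,\dots,m_q\}$, a generator divides $m$ if and only if it is an $\mathbf a$-monomial. Indeed, each generator of $I$ is an $\mathbf a$-monomial, so its exponent in $x_i$ is at most $a_i$ when $a_i<\infty$ and at most $N$ when $a_i=\infty$; hence it divides $m$. Conversely, each $m_k$ fails to be an $\mathbf a$-monomial, so some exponent $\deg_{x_j}(m_k)$ exceeds $a_j$; this $a_j$ is then necessarily finite and equal to $\deg_{x_j}(m)$, whence $m_k\nmid m$. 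This identifies $J_{\le m}=I$.

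The final step is to apply Corollary \ref{prop_peeva} to the ideal $J$ (which admits a minimal pruned resolution by hypothesis) together with the monomial $m$ just constructed, concluding that $I=J_{\le m}$ admits a minimal pruned resolution. The only place requiring genuine care — and the step I expect to be the sole obstacle — is the bookkeeping in the definition of $m$ when some entries $a_i$ equal $\infty$: one must pad those exponents out to a value $N$ beyond every degree appearing in $G(J)$, so that the $\mathbf a$-monomials still divide $m$ while no non-$\mathbf a$-monomial generator does. Apart from this choice, the argument is a formal invocation of Corollary \ref{prop_peeva}; I would note in passing that an alternative, self-contained route is to rerun the matching argument of that corollary verbatim, observing that no $I$-face can be pruned in an $m_k$-direction because its label is an $\mathbf a$-monomial and $m_k$ is not.
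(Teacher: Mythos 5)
Your proof is correct and follows the same route as the paper: the paper's own proof simply states that the proposition is a direct consequence of Corollary \ref{prop_peeva}, and your argument supplies exactly the missing detail, namely an explicit test monomial $m$ (with the finite exponents $a_i$ and the padding exponent $N$ in the $a_i=\infty$ directions) for which $J_{\le m}=I$, using that divisors of ${\bf a}$-monomials are ${\bf a}$-monomials. A marginally slicker choice with the same effect is $m=\operatorname{lcm}(G(I))$, which is itself an ${\bf a}$-monomial and hence divisible by no $m_k$, but this is a cosmetic difference, not a different proof.
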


\begin{proof}
 This is a direct consequence of Corollary \ref{prop_peeva}.   
\end{proof}

\subsection{Recursive minimal pruned resolutions}

Even in cases where the ideal is not vertex splittable, there are situations where we can still apply the same techniques to reduce the minimality of the pruned resolution of $I$ to that of a smaller ideal $J$.

\subsubsection{Artinian reductions}

Let $I\subseteq R$ be a monomial ideal that admits a decomposition
$I=J+(x_1^{a_1},\dots, x_n^{a_n})$, where $a_1,\dots , a_n$ are positive integer numbers. Without loss of generality we may assume that $a_i$ is larger than the degree of $x_i$ in any monomial in the set of minimal generators $G(J)$. This is the setup that we will consider throughout this subsection.

\vskip 2mm

% \begin{proposition}
%$I=I'+(m_1,\dots , m_q)$ Then 
%$I$ has a minimal pruned resolution $\Rightarrow$ %$I'$ has a minimal pruned resolution
%\end{proposition}

%{\cb Has to be adjusted. I don't have a general proof}

The main result of the subsection is:

\begin{theorem} \label{thm_a}
Let $I\subseteq R$ be a monomial ideal that admits a decomposition
$$I=J+(x_1^{a_1},\dots, x_n^{a_n})$$ with $a_i \in \mathbb{Z}_{> 0}$.%larger than the degree of $x_i$ in any generator of $J$. 
Then $I$ admits a minimal pruned resolution if and only if $J$ admits a minimal pruned resolution.
\end{theorem}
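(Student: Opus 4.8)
The plan is to fix, once and for all, an ordering of the generators of $I$ in which the generators of $J=(m_1,\dots,m_r)$ come first, in an order $\pi_J$ that minimizes the pruned resolution of $J$, followed by the pure powers $x_1^{a_1},\dots,x_n^{a_n}$. The key observation is that, since each $a_i$ exceeds the $x_i$-degree of every generator of $J$, a pure-power direction $x_i^{a_i}$ can never satisfy the grading-equality condition $gr(\sigma)=gr(\sigma+\varepsilon_i)$ of Algorithm \ref{alg1}: one would need $x_i^{a_i}\mid m_\sigma$, which forces $\sigma$ to already contain that generator. Hence the pure-power directions are never pruned, and since a never-pruned direction removes no vertices, the position of the pure-power directions in the order is irrelevant to the resulting matching; we may therefore place them last.

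Next I would set up a slice decomposition. Index the faces of $X_{\tt Taylor}$ by pairs $(S,T)$ with $S\subseteq\{1,\dots,r\}$ and $T\subseteq\{1,\dots,n\}$ recording which $J$-generators and which pure powers appear. Writing $x_T=\prod_{i\in T}x_i^{a_i}$ and letting $m_S|_{\hat T}$ denote the lcm $m_S$ with the variables indexed by $T$ deleted, one computes $gr(S,T)=x_T\cdot m_S|_{\hat T}$. Because the $J$-directions keep $T$ fixed while the pure-power directions remove no vertices, Algorithm \ref{alg1} prunes each \emph{slice} $\{(S,T): S\subseteq\{1,\dots,r\}\}$ independently, and the divisibility test inside slice $T$ reads $m_k|_{\hat T}\mid m_S|_{\hat T}$. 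Up to the harmless overall monomial factor $x_T$, slice $T$ is therefore exactly the pruned complex of the contraction $J^{(T)}:=(m_1|_{\hat T},\dots,m_r|_{\hat T})$ taken with the order $\pi_J$; the factor $x_T$ does not affect pruning by Lemma \ref{L1}.

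I would then reduce minimality to the slices. From $gr(S,T)=x_T\cdot m_S|_{\hat T}$ one reads off $T=\{i:\deg_{x_i}(gr(S,T))=a_i\}$, so two critical cells of equal multidegree necessarily share the same $T$. Moreover $T$ is non-increasing along every directed path of the Morse graph $G_X^{\cA}$, since the reversed (matching) edges only add $J$-generators while the forward edges only delete a generator; hence the Morse differential between two equal-graded critical cells is computed entirely within a single slice. Consequently the pruned resolution of $I$ is minimal if and only if $J^{(T)}$ admits a minimal pruned resolution in the order $\pi_J$ for every $T\subseteq\{1,\dots,n\}$. The slice $T=\emptyset$ is $J$ itself, which yields the ``only if'' direction at once (alternatively, one strips off the pure powers one at a time by iterating Corollary \ref{cor_a}, using that they may be taken last).

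The ``if'' direction is where the real work lies: one must show that a minimal pruned resolution of $J$ forces a minimal pruned resolution of \emph{every} contraction $J^{(T)}$ in the same order. Iterating over the elements of $T$, this reduces to a one-variable statement: setting a single variable $x_i\mapsto 1$ in the generators of $J$ (where $x_i$ occurs only to degree $<a_i$) preserves admitting a minimal pruned resolution in the given order. This contraction lemma is the main obstacle, because the contraction prunes at least as aggressively as $J$ --- its divisibility test simply drops the $x_i$-constraint --- so the matching genuinely changes, and one must verify that the extra matchings create no pair of equal-graded critical cells joined by a Morse path with nonzero-unit coefficient. I would prove it by building an explicit correspondence between the matchings of $J$ and of $J^{(i)}$, tracking how the lcm-equalities and the alternating Morse paths transform under the contraction; ruling out the appearance of a new cancellation-free path is the delicate point.
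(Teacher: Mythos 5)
Your slice decomposition is correct, and it is sharper than anything in the paper's own argument: the identity $gr(S,T)=x_T\cdot m_S|_{\hat T}$, the recovery of $T$ from the multidegree, and the monotonicity of $T$ along directed Morse paths do establish that the pruned resolution of $I$ (pure powers placed last) is minimal exactly when the pruned complex of every contraction $J^{(T)}$ is minimal in the induced order. In particular your ``only if'' direction is complete, and it is in substance the paper's \emph{entire} proof: the paper fixes the same normalization (each $a_i$ larger than the $x_i$-degree of every generator of $J$) and then iterates Corollary \ref{cor_a}, which amounts to reading off the slice $T=\emptyset$; the paper never addresses the converse at all. As a proof of the stated equivalence, however, your proposal has a genuine gap exactly where you flag it: the contraction lemma (``setting one variable to $1$ preserves minimality of the pruned resolution in a fixed order'') is announced, not proved.

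That gap cannot be closed, because the contraction lemma --- and with it the ``if'' direction in the plain sense of Theorem \ref{C1} and Algorithm \ref{alg1}, which is the sense both you and the paper's proof work in --- is false. Take the generators $p_1,\dots,p_6$ of Example \ref{ex_q_6} and separate them: let $\tilde J=(y_1p_1,\dots,y_6p_6)\subseteq\KK[x_1,\dots,x_{10},y_1,\dots,y_6]$. An edge of its Taylor graph in direction $j$ could only be pruned if $y_jp_j\mid m_\sigma$ with $\sigma_j=0$, which is impossible since $y_j\nmid m_\sigma$; so nothing is ever pruned, and the pruned resolution is the Taylor resolution, which here is minimal because adjacent lcm's always differ in their $y$-part. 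Thus $\tilde J$ admits a minimal pruned resolution in every order. Now let $I=\tilde J+(x_1^2,\dots,x_{10}^2,y_1^2,\dots,y_6^2)$, a legitimate instance of Theorem \ref{thm_a} with all $a_i=2$. By your own slice equivalence, the slice $T_0=\{y_1,\dots,y_6\}$ is, up to the factor $y_1^2\cdots y_6^2$, precisely the pruned complex of the contraction $(p_1,\dots,p_6)$, i.e.\ of the ideal of Example \ref{ex_q_6}; over $\QQ$ that complex is non-minimal for \emph{every} order of its generators, since its ranks are field-independent and hence bounded below by the characteristic-two Betti numbers, which strictly exceed the rational ones. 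So $I$ admits no minimal pruned resolution over $\QQ$ although $\tilde J$ does. In other words, your reduction is not merely missing a lemma: carried through honestly, it refutes the ``if'' direction of the theorem as stated, and shows that the one implication the paper actually proves (via Corollary \ref{cor_a}) is the only one that holds in this generality; any rescue would have to reinterpret ``pruned resolution'' (e.g.\ in the recursive Betti-splitting sense of Theorem \ref{C2}), and neither you nor the paper proves anything of that kind.
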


\begin{proof}
Without any loss of generality we may assume that $a_i \in \mathbb{Z}_{> 0}$ is larger than the degree of $x_i$ in any generator of $J$. 
If $I$ admits a minimal pruned resolution, then $J$ also admits a minimal pruned resolution by applying Corollary \ref{cor_a} iteratively.   
\end{proof}

In the setting of Theorem \ref{thm_a} we will say that a minimal pruned resolution ideal plus powers also has a minimal pruned resolution. In particular a lexsegment plus powers or a stable plus powers ideals admit a minimal pruned resolution. These ideals have been extensively studied in the literature in relation to the lex-plus-powers conjecture.

\vskip 2mm

A consequence of the previous result is a generalization of the main result in \cite{FaridiArtinian} where they considered the case of $q=4$ generators. The following result cannot be improved to $q=6$ because of Example \ref{ex_q_6}.

\begin{corollary}
Let $I\subseteq R$ be a monomial ideal that admits a decomposition
$$I=J+(x_1^{a_1},\dots, x_n^{a_n})$$ with $J$ having  $q \leq 5$ generators. Then both $I$ and $J$ admit a minimal pruned resolution, and thus they have a minimal free resolution supported on a CW-complex.
\end{corollary}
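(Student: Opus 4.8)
The plan is to obtain the corollary as a direct combination of the two main tools already in hand: the Artinian-reduction equivalence of Theorem \ref{thm_a} and the small-generator result of Theorem \ref{thm_q_5}. First I would apply Theorem \ref{thm_a}. Since $I=J+(x_1^{a_1},\dots,x_n^{a_n})$ with each $a_i\in\mathbb{Z}_{>0}$, and since (after the harmless normalization recorded in the setup of this subsection) we may assume each $a_i$ exceeds the degree of $x_i$ in every minimal generator of $J$, Theorem \ref{thm_a} says that $I$ admits a minimal pruned resolution if and only if $J$ does. This collapses the whole statement to the single assertion that $J$ admits a minimal pruned resolution.

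Next I would invoke Theorem \ref{thm_q_5} on $J$, which has $q\leq 5$ minimal generators. The one point requiring care is that Theorem \ref{thm_q_5} is phrased for squarefree ideals, whereas the ideal $J$ arising here need not be squarefree; indeed this is exactly the lex-plus-powers setting, where $J$ is typically stable or lexsegment rather than squarefree. The resolution of this mismatch is to observe that the proof of Theorem \ref{thm_q_5} is purely combinatorial: the entire case analysis for $q=4$ and $q=5$ rests only on the divisibility relations among the monomials $m_\sigma=\mathrm{lcm}(m_i\mid i\in\sigma)$ and on which edges Algorithm \ref{alg1} is forced to prune. None of these steps uses squarefreeness, so the argument transfers verbatim to an arbitrary monomial ideal with at most five generators. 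Consequently $J$ admits a minimal pruned resolution, and hence so does $I$ by the preceding paragraph.

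The CW-complex conclusion is then immediate from the cellular framework. By Theorem \ref{C1} every pruned resolution is a cellular resolution supported on the $\mathbb{Z}^n$-graded CW-complex $(X_{\cA},gr)$; when such a resolution is minimal it coincides with the minimal free resolution of $R/I$ (respectively $R/J$), which is therefore supported on a CW-complex. This gives the last clause of the statement for both ideals at once.

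The main obstacle is precisely the squarefree-versus-general discrepancy described above: one must confirm that the obstruction-to-minimality analysis in Theorem \ref{thm_q_5}, written there for squarefree ideals, continues to detect exactly the same configurations (non-comparable critical cells $\sigma''\leq\!\!\!\!/\ \sigma$ with $m_{\sigma''}=m_\sigma$ and the presence or absence of a contributing path in $G_X^{\cA}$) for non-squarefree $J$. Once this is granted, everything else is bookkeeping. I would close by noting that the bound $q\leq 5$ is sharp: Example \ref{ex_q_6} furnishes a six-generator ideal with no minimal pruned resolution in characteristic zero, and Theorem \ref{thm_a} (in the contrapositive direction proved there) transfers this failure to the associated $I=J+(x_1^{a_1},\dots,x_n^{a_n})$, explaining why the result cannot be extended to $q=6$.
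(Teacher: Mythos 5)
Your proof is correct and takes essentially the same route as the paper, whose entire argument is: $J$ admits a minimal pruned resolution by Theorem \ref{thm_q_5} since it has $q\leq 5$ generators, and then $I$ does too by Theorem \ref{thm_a}. Your additional care about the squarefree hypothesis in Theorem \ref{thm_q_5} addresses a point the paper silently glosses over, and your resolution of it is sound: the case analysis there uses only lcm/divisibility relations and the combinatorics of Algorithm \ref{alg1}, so it applies verbatim (or, alternatively, via polarization, which preserves the lcm lattice) to a non-squarefree $J$.
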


\begin{proof}
If $J$ has $q \leq 5$ generators then it admits a minimal pruned resolution by Theorem \ref{thm_q_5}. 
$I$ also admits a minimal pruned resolution by
Theorem \ref{thm_a}.     
\end{proof}

 \subsubsection{Edge ideals of graphs}
Given a simple graph in a set of $n$ vertices we may associate a squarefree monomial ideal $I$ generated in degree two, where the generators of the form $x_ix_j$ correspond to the edges of the graph.

\vskip 2mm

For simplicity in the notation, consider a $x_n$-splitting $I=x_n J+K$ with $K \subseteq \KK[x_1,\dots, x_{n-1}]$ and $J=(x_{1}, \dots, x_{s})$ corresponding to the neighbors of the vertex $x_n$. We will assume that there are no edges between these neighbors. We have that $$x_n J\cap K= x_n((x_{1}, \dots, x_{s})\cap K)$$ admits a splitting $x_n J\cap K= M_1+(M_2+\cdots+M_s)$ with 
$$ M_k= (x_kx_n)\cap\left[ \left(\sum_{\substack{x_i,x_j \notin N(x_{n}) \\ x_i,x_j \notin N(x_{k})}}       
(x_{i}x_{j})\right) +  \left(\underset{x_{\ell} \in N_{G}(x_{k})}{\sum}(x_{\ell})\right)\right],$$ 
where the variable $x_t$ does not appear in the ideal $M_k$ for $1\leq t < k \leq s$. %We also point out that $x_{\ell} \in N_{G}(x_{k})$ could be a neighbor of $x_n$ but it has to satisfy $x_\ell\in \{x_{k+1}, \dots, x_s\}$. 
%\vskip 2mm
We may further decompose $x_n J\cap K$ by considering $M_1$,  $M_2+\cdots+M_s$ and $$M_1\cap (M_2+\cdots+M_s)= (M_1\cap M_2)+ \cdots + (M_1\cap M_s).$$
We continue this iterative process and we have a splitting at each step because of the assumption  that there are no edges between the neighbors $x_1,\cdots ,x_s$ (see \cite{AlvJof1} for details, where the Alexander dual version of this process was considered). The upshot of this iterated splitting process is that the Betti numbers of  $I=x_n J+K$ can be described
in terms of $J$, $K$ and all the 
intersections 
$$ M_{k_1}\cap \cdots \cap M_{k_m} = (x_{k_1}\cdots x_{k_m}x_n)\cap \left[ \left(\sum_{\substack{x_i,x_j \notin N(x_{n}) \\ x_i,x_j \notin N(x_{k_t}), \\t=1\dots,m}} (x_{i}x_{j})\right) +  \left(\sum_{\substack{ x_{\ell_t} \in N(x_{k_t}), \\t=1\dots,m}} (x_{\ell_1}\cdots x_{\ell_m})\right)\right].$$

We point out that they may exist vertices $x_{\ell_t}$ in the neighborhood of several $x_{k_t}$'s and thus the degree of the monomial $x_{\ell_1}\cdots x_{\ell_m}$ may be smaller than $m$.  

\vskip 2mm
Denote
$$I(H_{k_1,\dots, k_m})= \left(\sum_{\substack{x_i,x_j \notin N(x_{n}) \\ x_i,x_j \notin N(x_{k_t}), \\t=1\dots,m}} (x_{i}x_{j})\right) +  \left(\sum_{\substack{ x_{\ell_t} \in N(x_{k_t}), \\t=1\dots,m}} (x_{\ell_1}\cdots x_{\ell_m})\right).$$ This is the edge ideal of an hypergraph $H_{k_1,\dots, k_m}$ in the polynomial ring $R'=\KK[x_{s+1}, \dots, x_{n-1}]$ in $n-(s+1)$ variables.  Notice also that $M_{k_1}\cap \cdots \cap M_{k_m}$ admits a minimal pruned resolution whenever the ideal $I(H_{k_1,\dots, k_m})$ admits a minimal pruned resolution. % because it does not contain the variables $x_{k_1},\ldots ,x_{k_m},x_n$.

 \begin{theorem} \label{thm_b}
 Let $I\subseteq R$ be the edge ideal of a graph. Consider the $x_n$-splitting $I=x_n J+K$ with $K \subseteq \KK[x_1,\ldots, x_{n-1}]$. Assume that there are no edges between the neighbors of $x_n$. Then $I$ admits a minimal pruned resolution if $K$ and all the ideals $I(H_{k_1,\dots, k_m})$, $1\leq k_1<\cdots < k_m\leq s$, admit a minimal pruned resolution.   
\end{theorem}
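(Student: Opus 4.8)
The plan is to realise $I$ through a two–level iterated Betti splitting and feed it into the iterative version of Theorem \ref{C2}, reducing the minimality of the pruned resolution of $I$ to that of its pieces. Recall that for a Betti splitting $I=A+B$ the pruned Betti numbers satisfy $\overline\beta_{i,\alpha}(I)=\overline\beta_{i,\alpha}(A)+\overline\beta_{i,\alpha}(B)+\overline\beta_{i-1,\alpha}(A\cap B)$, while the genuine Betti numbers satisfy the same identity with $\beta$ in place of $\overline\beta$. Since $\beta_{i,\alpha}\le\overline\beta_{i,\alpha}$ always, the resolution produced by Theorem \ref{C2} is minimal precisely when those of $A$, $B$ and $A\cap B$ are; so it suffices to exhibit a compatible ordering of the generators of $I$ making each piece minimal.

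First I would treat the outer splitting $I=x_nJ+K$. All generators of $x_nJ=x_n(x_1,\dots,x_s)$ are divisible by $x_n$, and $x_nJ$ has a linear — hence componentwise linear — minimal free resolution (a shift of the Koszul complex on $x_1,\dots,x_s$); therefore Theorem \ref{T} guarantees that $I=x_nJ+K$ is a Betti splitting. By the reduction above, $I$ admits a minimal pruned resolution once $x_nJ$, $K$ and $x_nJ\cap K$ do. The piece $K=I(G\setminus\{x_n\})$ is minimal by hypothesis. For $x_nJ$, note that the Taylor complex of $J=(x_1,\dots,x_s)$ already has pairwise distinct multidegrees, so no edge is pruned and its pruned resolution is the minimal Koszul resolution; Lemma \ref{L1} then promotes this to $x_nJ$.

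The substantive step is $x_nJ\cap K=x_n\big((x_1,\dots,x_s)\cap K\big)$. Here I would run the inner iterated splitting $x_nJ\cap K=M_1+(M_2+\cdots+M_s)$, then $M_1\cap(M_2+\cdots+M_s)=(M_1\cap M_2)+\cdots+(M_1\cap M_s)$, and so on. Because there are no edges among the neighbours $x_1,\dots,x_s$ of $x_n$, the variables $x_1,\dots,x_{k-1}$ are absent from $M_k$, so at every stage the decomposition is a genuine $x_k$-partition and a Betti splitting, exactly as in the Alexander–dual analysis of \cite{AlvJof1}. Iterating Theorem \ref{C2} along this recursion — whose leaves are the intersections $M_{k_1}\cap\cdots\cap M_{k_m}$ — reduces the minimality of $x_nJ\cap K$ to that of all these intersections. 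Finally, since $x_{k_1}\cdots x_{k_m}x_n$ and $I(H_{k_1,\dots,k_m})\subseteq\KK[x_{s+1},\dots,x_{n-1}]$ involve disjoint variables, one has $(x_{k_1}\cdots x_{k_m}x_n)\cap I(H_{k_1,\dots,k_m})=(x_{k_1}\cdots x_{k_m}x_n)\,I(H_{k_1,\dots,k_m})$, so Lemma \ref{L1} lets each intersection inherit a minimal pruned resolution from $I(H_{k_1,\dots,k_m})$, minimal by hypothesis. Combining the three pieces yields the claim.

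The main obstacle I anticipate is concentrated in the inner recursion and is twofold: (i) verifying that every intermediate decomposition $M_k+(M_{k+1}+\cdots+M_s)$, together with the ensuing intersection $M_1\cap(M_2+\cdots+M_s)=\sum_k(M_1\cap M_k)$ and its higher analogues, is genuinely a Betti splitting — this is exactly where the hypothesis that the neighbours of $x_n$ form an independent set is essential, since it is the variable separation that simultaneously forces the product form of the intersections and supports each splitting (the details being those of \cite{AlvJof1}); and (ii) the bookkeeping needed to keep all pruning inside the single Taylor complex $X_{\tt Taylor}$ of $I$ under one coherent ordering of the generators, so that the \emph{iterative} Theorem \ref{C2} — rather than unrelated applications to separate complexes — produces a cellular resolution whose pruned Betti numbers are the sum dictated by the chain of Betti splittings.
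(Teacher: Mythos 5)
Your proposal is correct and follows essentially the same route as the paper: the outer $x_n$-splitting $I=x_nJ+K$, the inner iterated splitting of $x_nJ\cap K$ into the ideals $M_{k_1}\cap\cdots\cap M_{k_m}$ (with the independence of the neighbors of $x_n$ guaranteeing a Betti splitting at each stage, as in \cite{AlvJof1}), and the reduction of each leaf to $I(H_{k_1,\dots,k_m})$ via the product structure and Lemma \ref{L1}. The only cosmetic difference is that you invoke Theorem \ref{T} (componentwise linearity of $x_nJ$) to certify the outer Betti splitting, where the paper relies on the known fact that $x_i$-partitions of edge ideals are always Betti splittings; both justifications are valid.
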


\begin{proof}
Consider the decomposition $I=x_n J+K$.  Clearly, $x_iJ$ admits a minimal pruned resolution. On the other hand $x_i J\cap K$ admits a minimal pruned resolution whenever
all the ideals $I(H_{k_1,\dots, k_m})$, $1\leq k_1<\cdots < k_m\leq s$, admit a minimal pruned resolution. Then the result follows.
%if $K$ admits a minimal pruned resolution. 
\end{proof}

 Using this result iteratively, we may show that edge ideals of many classes of graphs  admit a minimal pruned resolution. In particular we recover \cite[Examples 5.8, 5.9]{AMFG20} and give a different approach to the free resolution of a forest given in \cite{BarileMacchia}.

 \begin{corollary} \label{C}
 Let $I\subseteq R$ be the edge ideal of a graph. Consider the $x_n$-splitting $I=x_n J+K$ with $K \subseteq \KK[x_1,\ldots, x_{n-1}]$. If the degree of the vertex $x_n$ is one, then $I$ admits a minimal pruned resolution if $K$ admits a minimal pruned resolution.   
\end{corollary}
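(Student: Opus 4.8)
The plan is to specialize Theorem \ref{thm_b} to the case $s=1$. Since the vertex $x_n$ has degree one, it has a single neighbour, which we relabel $x_1$, so that $J=(x_1)$ and the hypothesis ``no edges between the neighbours of $x_n$'' holds vacuously. By Theorem \ref{thm_b}, in order to conclude that $I$ admits a minimal pruned resolution it suffices that $K$ does (our standing hypothesis) together with all the ideals $I(H_{k_1,\dots,k_m})$ with $1\le k_1<\cdots<k_m\le s$. As $s=1$, the only such ideal is $I(H_1)$ (the case $m=1$, $k_1=1$), so the whole statement reduces to showing that $I(H_1)$ admits a minimal pruned resolution.

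Next I would unwind the definition of $I(H_1)$. Writing $N(x_1)\setminus\{x_n\}=\{x_{\ell_1},\dots,x_{\ell_t}\}$ for the remaining neighbours of $x_1$ and $W=\{x_2,\dots,x_{n-1}\}\setminus N(x_1)$, the intersection formula preceding Theorem \ref{thm_b} specializes to
$$I(H_1)=\big(x_{\ell_1},\dots,x_{\ell_t}\big)+I(G'),$$
where $G'=G[W]$ is the subgraph of $G$ induced on $W$. The two summands live in disjoint sets of variables, since each $x_{\ell_i}$ lies in $N(x_1)$ while $I(G')$ only involves variables of $W$.

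The crucial observation is that $I(G')$ is a restriction of $K$. Taking the squarefree monomial $m=\prod_{x_k\in W}x_k$, an edge $x_ix_j$ of $G\setminus\{x_n\}$ divides $m$ exactly when both $x_i,x_j\in W$, and hence $I(G')=K_{\le m}$. Since $K$ admits a minimal pruned resolution, Corollary \ref{prop_peeva} shows that $I(G')$ does too. It then remains to add back the variables $x_{\ell_1},\dots,x_{\ell_t}$, which I would do one at a time. Each $x_{\ell_i}$ divides no generator of the remaining ideal, so the associated $x_{\ell_i}$-partition is a Betti splitting by Theorem \ref{T}, the part $(x_{\ell_i})$ being principal and therefore componentwise linear; its principal part trivially admits a minimal pruned resolution, and its intersection with the rest equals $x_{\ell_i}$ times that rest, which admits a minimal pruned resolution by Lemma \ref{L1}. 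Iterating the construction of Theorem \ref{C2} for $i=1,\dots,t$ yields a minimal pruned resolution of $I(H_1)$, and Theorem \ref{thm_b} then gives the claim.

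The main obstacle I expect is essentially bookkeeping: extracting $I(H_1)$ correctly from the general intersection formula and verifying that its quadratic part is exactly the induced-subgraph ideal $K_{\le m}$, so that Corollary \ref{prop_peeva} can be invoked. Once that identification is in place, absorbing the extra degree-one generators $x_{\ell_i}$ is routine through the iterated Betti-splitting machinery already developed, and no genuinely new idea is needed.
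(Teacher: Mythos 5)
Your proof is correct, but it takes a longer route than the paper's, and the extra length is exactly where the real content lies. The paper's own proof is a single line: writing $x_1$ for the unique neighbor of $x_n$, it notes $x_nJ\cap K=(x_1x_n)\cap K$ and simply asserts that this intersection admits a minimal pruned resolution whenever $K$ does. That assertion is not immediate: $(x_1x_n)\cap K=x_1x_n\,(K:x_1)$, and $K:x_1$ is not $K$ but rather the ideal generated by the neighbors of $x_1$ (other than $x_n$) together with the edge ideal of the subgraph induced on the vertices outside the closed neighborhood of $x_1$ --- which is exactly the ideal $I(H_1)$ you work with; in particular Lemma \ref{L1} alone does not apply, since $x_1$ occurs in the generators of $K$. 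So what you prove is precisely the step the paper's ``thus'' glosses over: you specialize Theorem \ref{thm_b} to $s=1$, identify $I(H_1)$ as a set of variables plus the restriction $K_{\le m}$ to the induced subgraph, handle the restriction with Corollary \ref{prop_peeva}, and absorb the variables $x_{\ell_i}$ one at a time through Betti splittings (Theorem \ref{T}, Lemma \ref{L1}, and the recursive form of Theorem \ref{C2}). Both arguments rest on the same $x_n$-splitting, so the overall strategy coincides; what the paper's version buys is brevity, while yours supplies a complete justification of the inheritance claim for the intersection ideal from results actually established in the paper, which is the more defensible form of the argument.
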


\begin{proof}
 Assume $x_1$ is the only neighbor of $x_n$. Then  $x_n J\cap K= x_1x_n \cap K$ and thus it admits a minimal pruned resolution if $K$ does so.
\end{proof}

\begin{corollary} \label{Ctree}
Let $I\subseteq R$ be the edge ideal of a path, a tree or a forest. Then $I$ admits minimal pruned resolution.
\end{corollary}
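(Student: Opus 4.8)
The plan is to induct on the number of vertices (equivalently, on the number of edges) of the graph, exploiting two standard facts from graph theory: every forest with at least one edge has a vertex of degree one, and deleting a vertex from a forest again yields a forest.

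First I would reduce to the case of forests. Since every path and every tree is in particular a forest, it suffices to prove the statement for the edge ideal $I=I(G)$ of an arbitrary forest $G$. The base case is the forest with no edges, whose edge ideal is the zero ideal and hence trivially admits a minimal pruned resolution; a single edge likewise gives a principal ideal, which is trivial as well. For the inductive step, relabelling the vertices if necessary, let $x_n$ be a leaf of $G$, with unique neighbor, and consider the associated $x_n$-splitting $I=x_nJ+K$, where $K=I(G\setminus\{x_n\})\subseteq\KK[x_1,\ldots,x_{n-1}]$. Because $x_n$ has degree one, Corollary \ref{C} applies directly and shows that $I$ admits a minimal pruned resolution as soon as $K$ does.

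To close the induction I would observe that $G\setminus\{x_n\}$ is obtained from the forest $G$ by removing a vertex together with its single incident edge, and is therefore again a forest (possibly acquiring one additional isolated vertex, which does not affect the edge ideal) on strictly fewer vertices. By the induction hypothesis $K=I(G\setminus\{x_n\})$ admits a minimal pruned resolution, and hence so does $I$. There is essentially no obstacle beyond invoking the right tool: the whole argument is carried by Corollary \ref{C}, whose degree-one hypothesis is automatically satisfied at a leaf, and in particular the condition in Theorem \ref{thm_b} that there be no edges among the neighbors of $x_n$ is vacuous since a leaf has a single neighbor. The only point needing a moment's care is the verification that leaf-deletion keeps us within the class of forests so that the induction closes, which is immediate. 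The result then follows by induction on the number of vertices.
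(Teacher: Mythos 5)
Your proof is correct and follows essentially the same route as the paper, which leaves this corollary without an explicit proof precisely because it is the immediate iteration of Corollary \ref{C}: pick a leaf $x_n$, split off $I = x_n J + K$, and induct on the forest $G\setminus\{x_n\}$. Your added care with the base cases and the observation that leaf-deletion stays within the class of forests are exactly the (implicit) details the paper intends.
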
 

\begin{corollary} \label{Ccycle}
Let $I\subseteq R$ be the edge ideal of  a cycle. Then $I$ admits minimal pruned resolution.
\end{corollary}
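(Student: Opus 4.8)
The plan is to feed the cycle into the edge-ideal recursion of Theorem~\ref{thm_b}, after disposing of the degenerate triangle separately. If $n=3$, the edge ideal $(x_1x_2,x_2x_3,x_1x_3)$ has only three generators, so it admits a minimal pruned resolution by Theorem~\ref{thm_q_5}; note that Theorem~\ref{thm_b} does not apply here, since the two neighbors of every vertex are themselves adjacent.

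For $C_n$ with $n\ge 4$, label the vertices cyclically as $1,\dots,n$ and perform the $x_n$-splitting $I=x_nJ+K$ at the vertex $n$, whose neighbors are $1$ and $n-1$. Because a cycle of length at least four has no chord, there is no edge between $1$ and $n-1$, so the standing hypothesis of Theorem~\ref{thm_b} is satisfied. The ideal $K$ is the edge ideal of $C_n\setminus\{n\}$, i.e.\ of the path $1-2-\cdots-(n-1)$, and hence admits a minimal pruned resolution by Corollary~\ref{Ctree}. It therefore remains only to treat the finitely many intermediate ideals $I(H_{k_1,\dots,k_m})$ with $\{k_1,\dots,k_m\}\subseteq\{1,n-1\}$; identifying these explicitly is the main step of the argument.

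Using $N(1)=\{2,n\}$ and $N(n-1)=\{n-2,n\}$ and discarding $x_n$, which does not occur in the relevant polynomial ring, the defining formula yields
\begin{align*}
I(H_1)&=(x_2)+I(P_{3,\dots,n-2}),\\
I(H_{n-1})&=(x_{n-2})+I(P_{2,\dots,n-3}),\\
I(H_{1,n-1})&=(x_2x_{n-2})+I(P_{3,\dots,n-3}),
\end{align*}
where $P_{a,\dots,b}$ denotes the path on the consecutive vertices $a,\dots,b$ and all ideals live in the variables distinct from $x_1,x_{n-1},x_n$. In the last case the extra generator $x_2x_{n-2}$ is an edge on vertices disjoint from those of $P_{3,\dots,n-3}$, so $I(H_{1,n-1})$ is the edge ideal of a forest and admits a minimal pruned resolution by Corollary~\ref{Ctree}. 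For $I(H_1)$ and $I(H_{n-1})$ the extra generator is a single variable coprime to the remaining path, so these are $x_2$- (resp.\ $x_{n-2}$-) partitions whose divisible part is principal, hence componentwise linear; Theorem~\ref{T} shows they are Betti splittings, the intersection is of the form $x_2\,I(P_{3,\dots,n-2})$ (resp.\ $x_{n-2}\,I(P_{2,\dots,n-3})$), which admits a minimal pruned resolution by Lemma~\ref{L1} together with Corollary~\ref{Ctree}, and the iterative version of Theorem~\ref{C2} then supplies a minimal pruned resolution. For small $n$ these ideals degenerate to principal ideals or a single edge, which are trivially handled.

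With $K$ and every $I(H_{k_1,\dots,k_m})$ admitting a minimal pruned resolution, Theorem~\ref{thm_b} immediately gives that $I$ does as well. The only genuine difficulty I anticipate is the bookkeeping in extracting the ideals $I(H_{k_1,\dots,k_m})$ from the definition and verifying the degenerate low-$n$ cases; once they are recognized as a forest and as coprime-variable extensions of paths, the conclusion follows at once from the machinery already in place.
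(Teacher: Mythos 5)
Your overall strategy is the same as the paper's: perform the $x_n$-splitting, handle $K$ as a path via Corollary~\ref{Ctree}, and invoke Theorem~\ref{thm_b}; your separate treatment of the triangle via Theorem~\ref{thm_q_5} and your handling of $I(H_1)$, $I(H_{n-1})$ are fine. The genuine gap is your identification of $I(H_{1,n-1})$. You read the paper's displayed formula for $M_{k_1}\cap\cdots\cap M_{k_m}$ literally, but that formula omits the ``cross terms'' arising as least common multiples of a vertex generator of one $M_k$ with an edge generator of another, and for the cycle these are exactly the two edges that close your path back up into a cycle. Concretely, with your labelling, $M_1=x_1x_n\bigl((x_2)+(x_3x_4,\dots,x_{n-3}x_{n-2})\bigr)$ and $M_{n-1}=x_{n-1}x_n\bigl((x_{n-2})+(x_2x_3,\dots,x_{n-4}x_{n-3})\bigr)$, and computing the intersection by pairwise lcm's gives
$$M_1\cap M_{n-1}=x_1x_{n-1}x_n\,\bigl(x_2x_{n-2},\,x_2x_3,\,x_3x_4,\dots,x_{n-3}x_{n-2}\bigr),$$
that is, $x_1x_{n-1}x_n$ times the edge ideal of the cycle on the $n-3$ vertices $\{2,\dots,n-2\}$, not $(x_2x_{n-2})+I(P_{3,\dots,n-3})$. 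The generators you lose are ${\rm lcm}(x_2,\,x_2x_3)=x_2x_3$ and ${\rm lcm}(x_{n-2},\,x_{n-3}x_{n-2})=x_{n-3}x_{n-2}$. As a check, take $n=7$: the true intersection is $x_1x_6x_7(x_2x_3,x_3x_4,x_4x_5,x_2x_5)$, a $4$-cycle, whereas your formula gives only $(x_2x_5,x_3x_4)$; the monomial $x_1x_2x_3x_6x_7$ lies in $M_1\cap M_6$ but not in $x_1x_6x_7(x_2x_5,x_3x_4)$. Note that the paper's own proof does not use the displayed general formula at this step either: it computes the intersection directly and finds the cycle.

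This is not a repairable bookkeeping slip within your argument, because the whole point of the paper's proof is that the intermediate ideal is itself a smaller cycle: no reduction of $C_n$ purely to paths and forests can work, since the Betti numbers of a cycle genuinely involve those of a shorter cycle. That is why the paper's proof ends with ``so we proceed by induction'': $I(H_{1,2})$ (in the paper's labelling) is the edge ideal of a cycle with $n-3$ vertices, minimality for $C_n$ is deduced from minimality for $C_{n-3}$, and the small cases (at most five generators) are covered by Theorem~\ref{thm_q_5}. Your proof becomes correct if you replace the forest identification by the cycle identification above and then run induction on $n$ exactly as the paper does; as written, the claim that $I(H_{1,n-1})$ is a forest is false and the conclusion does not follow.
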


\begin{proof}
 The ideal $K$ in the $x_n$-splitting $I=x_n J+K$ is the edge ideal of a path so it admits a minimal pruned resolution. We have $x_n J\cap K= x_n((x_{1}, x_{2})\cap K)= M_1+M_2$. Let $x_3, x_{n-1}$ be the neigbors of $x_1,x_2$ respectively. Then 
 $$M_1=(x_1x_n)\cap (x_3,x_4x_5, \ldots,x_{n-2}x_{n-1})$$
 $$M_2=(x_2x_n)\cap (x_{n-1},x_3x_4,x_4x_5, \ldots,x_{n-3}x_{n-2})$$
 $$M_1+M_2=(x_1x_2x_n)\cap (x_{n-1}x_3,x_3x_4, \ldots,x_{n-2}x_{n-1})$$ and thus $I(H_1), I(H_2)$ admit a minimal pruned resolution because are the edge ideals of paths and $I(H_{1,2})$ admits a minimal pruned resolution because is the edge ideal of a cycle with $n-3$ vertices so we proceed by induction.
\end{proof}

The following results do not need Theorem \ref{thm_b} since we have a nice description of the ideal $x_n J \cap K$.

\begin{corollary} \label{Cwheel}
Let $I\subseteq R$ be the edge ideal of   a wheel. Then $I$ admits minimal pruned resolution.
\end{corollary}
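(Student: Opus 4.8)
The plan is to realize the wheel's edge ideal as an $x_n$-splitting all three of whose pieces admit a minimal pruned resolution, and then to feed this splitting into the iterative framework of Theorem \ref{C2}. Choosing $x_n$ to be the hub and labelling the rim vertices $x_1,\dots,x_{n-1}$, the rim is the cycle $C_{n-1}$ and the hub is adjacent to every rim vertex, so the set of neighbours of $x_n$ is $\{x_1,\dots,x_{n-1}\}$. The $x_n$-splitting then reads $I=x_nJ+K$ with $x_nJ=(x_nx_1,\dots,x_nx_{n-1})$ the spokes and $K=I(C_{n-1})$ the rim. The decisive structural observation is that each generator $x_jx_{j+1}$ of $K$ is divisible by the variable $x_j\in J=(x_1,\dots,x_{n-1})$, so $K\subseteq J$. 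This inclusion is exactly what makes the intersection transparent, and it is also why Theorem \ref{thm_b} does not apply: its hypothesis that no edges join the neighbours of $x_n$ fails here, since the rim edges join them.

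First I would verify that this is a genuine Betti splitting. The ideal $x_nJ=x_n(x_1,\dots,x_{n-1})$ is obtained from the variable ideal $(x_1,\dots,x_{n-1})$, which has a linear (Koszul) resolution, by multiplication by the single monomial $x_n$; since this merely shifts degrees, $x_nJ$ again has a linear resolution, is in particular componentwise linear, and all of its generators are divisible by $x_n$. Theorem \ref{T} then yields that $I=x_nJ+K$ is a Betti splitting. Next I would pin down the middle term. Using $K\subseteq J$ together with the fact that $x_n$ appears in neither $J$ nor $K$: a monomial $m\in x_nJ\cap K$ is divisible by some generator $g$ of $K$ with $x_n\nmid g$, and writing $m=x_nm'$ with $m'\in J$ forces $g\mid m'$, whence $m\in x_nK$; the reverse inclusion being clear, we get $x_nJ\cap K=x_nK=x_n\,I(C_{n-1})$, with minimal generators the degree-three monomials $x_nx_jx_{j+1}$, one per rim edge.

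With these identifications in hand, each of the three pieces admits a minimal pruned resolution: $x_nJ$ has linear quotients, so Theorem \ref{thm3} applies; $K=I(C_{n-1})$ admits one by Corollary \ref{Ccycle}; and $x_nJ\cap K=x_n\,I(C_{n-1})$ admits one by Lemma \ref{L1} applied to the cycle $I(C_{n-1})$. Feeding these into the iterative Betti-splitting pruning of Theorem \ref{C2}, and using that the splitting is Betti so that $\overline{\beta}_{i,\alpha}(I)=\beta_{i,\alpha}(J)+\beta_{i,\alpha}(K)+\beta_{i-1,\alpha}(J\cap K)=\beta_{i,\alpha}(I)$, one concludes that $I$ admits a minimal pruned resolution. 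Note that no fresh induction is needed at the level of the wheel itself, since the cyclic rim is handled entirely by Corollary \ref{Ccycle}.

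The step I expect to be the crux is the middle term $x_nJ\cap K$: the whole argument rests on the clean identity $x_nJ\cap K=x_nK$, which in turn depends on the inclusion $K\subseteq J$ arising from the hub being adjacent to all rim vertices. Once this identity is established, the rest is assembly of known inputs (Theorem \ref{T}, Corollary \ref{Ccycle}, Lemma \ref{L1}, Theorem \ref{C2}); the potential pitfall would be mis-reading the intersection as something genuinely larger, so I would double-check the redundancy of the degree-four lcm generators $x_nx_ix_jx_{j+1}$ against the degree-three ones before invoking the framework.
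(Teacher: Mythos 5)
Your proof is correct and takes essentially the same route as the paper's: the same hub splitting $I=x_nJ+K$ with $K=I(C_{n-1})$, the same key identity $x_nJ\cap K=x_n((x_1,\dots,x_{n-1})\cap K)=x_nK$, and the same inputs (Corollary \ref{Ccycle} for the rim and Lemma \ref{L1} for $x_nK$). The only minor difference is that you justify the Betti splitting explicitly via Theorem \ref{T}, whereas the paper relies on its earlier observation, following Francisco, H\`a and Van Tuyl, that an $x_i$-partition of the edge ideal of a simple graph is always a Betti splitting.
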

\begin{proof}
   Let $x_n$ be the vertex of degree $n-1$. The ideal $K$ in the $x_n$-splitting $I=x_n J+K$ is the edge ideal of a cycle so it admits a minimal pruned resolution. We have $$x_n J\cap K= x_n((x_{1}, \ldots, x_{n-1})\cap K)=x_nK$$ and thus it also admits a minimal pruned resolution.
\end{proof}

\begin{corollary} \label{Cbipartite}
Let $I\subseteq R$ be the edge ideal of  a  complete bipartite graph. Then $I$ admits minimal pruned resolution.
\end{corollary}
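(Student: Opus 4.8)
The plan is to mirror the wheel case (Corollary \ref{Cwheel}), exploiting that each side of a bipartite graph is an independent set so that the intersection ideal in the splitting has an especially simple form. Write the complete bipartite graph $G$ on the vertex set $\{x_1,\dots,x_n\}$ with the bipartition into independent sets $A$ and $B$, every edge joining $A$ to $B$, and choose the labelling so that $x_n\in B$; thus $I=I(G)=(x_ix_j\mid x_i\in A,\ x_j\in B)$. I would induct on $|B|$, the base case $|B|=1$ being a star, hence a tree, so that Corollary \ref{Ctree} applies.

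For the inductive step I take the $x_n$-splitting $I=x_nJ+K$. The neighbors of $x_n$ are exactly the vertices of $A$, so $J=(x_i\mid x_i\in A)$, while $K=I(G\setminus\{x_n\})$ is the edge ideal of the complete bipartite graph with parts $A$ and $B\setminus\{x_n\}$. Since $A$ is an independent set there are no edges between the neighbors of $x_n$, and since edge ideals always admit $x_n$-splittings this decomposition is a Betti splitting. By induction $K$ admits a minimal pruned resolution; $J$ is generated by variables so it trivially admits one, and then $x_nJ$ admits a minimal pruned resolution by Lemma \ref{L1}.

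The crux---supplying the promised nice description of $x_nJ\cap K$---is the observation that every edge of a bipartite graph joins $A$ to $B$, so with $x_n\in B$ each minimal generator $x_ix_j$ of $K$ is divisible by its endpoint $x_i\in A$. Hence $K\subseteq J$, which gives $x_nJ\cap K=x_n(J\cap K)=x_nK$. Applying Lemma \ref{L1} once more shows $x_nK$ admits a minimal pruned resolution. As $x_nJ$, $K$, and $x_nJ\cap K$ all admit minimal pruned resolutions, the iterative version of Theorem \ref{C2} yields that $I$ admits a minimal pruned resolution, closing the induction.

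I do not anticipate a real obstacle: the entire argument collapses to the containment $K\subseteq J$, which is immediate from bipartiteness. The only mild care needed is the induction bookkeeping---verifying that deleting the vertex $x_n$ returns a smaller complete bipartite graph and that the base case is subsumed by the forest result---but this is routine.
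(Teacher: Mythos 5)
Your proof is correct and takes essentially the same route as the paper's: both use the $x_n$-splitting $I = x_nJ + K$ with $K$ the edge ideal of a smaller complete bipartite graph handled by induction, and both hinge on the containment $K \subseteq J$ (every edge meets the side of the bipartition containing the neighbors of $x_n$) to get $x_nJ \cap K = x_nK$. The only difference is that you make explicit the bookkeeping the paper leaves implicit, namely the base case as a star (covered by Corollary \ref{Ctree}) and the appeals to Lemma \ref{L1} and Theorem \ref{C2}.
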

\begin{proof}
Let $\{x_1,\ldots, x_n,y_1,\ldots , y_m\}$ be the set of vertices of the graph. The ideal $K$ in the $x_n$-splitting $I=x_n J+K$ is a complete bipartite graph in a smaller set of vertices so it admits a minimal pruned resolution by induction. We have $$x_n J\cap K= x_n((y_{1}, \ldots, y_{m})\cap K)=x_nK$$ and thus it also admits a minimal pruned resolution.
%All the ideals $I(H_{k_1,\dots, k_m}) $  are equal to $(x_1,\dots ,x_{n-1})$ so they admit a minimal pruned resolution.
\end{proof}

%\section{Iterated pruned resolutions}
%
%
%\subsection{Monomial ideals in positive characteristic}
%
%If we obtain the whole pruned free resolution can we develop a second pruning using the differentials?
%
%Explain Reiner-Welker example, triangulation of $\mathbb{P}_{\mathbb{R}}^2$, others....
%
%
%\subsection{$p$-Borel fixed}
%$J = (x^{11}, x^{10}y, x^9y^2, x^8y^3, x^7y^4, x^6y^5, x^5y^6, x^9z^2, x^5y^4z^2, x^7z^4)$
%is 2-Borel. The ideal $J' = (J, x ^6yz^4)$ is 2-Borel 

%%%%%%%%%%%%%%%%%%%%%%%%%%%%%%%%%%%%%%%%%%%%%%%%%%%%%%%%%%%%%
\section*{Acknowledgments}
%%%%%%%%%%%%%%%%%%%%%%%%%%%%%%%%%%%%%%%%%%%%%%%%%%%%%%%%%%%%%
We thank Davide Bolognini for some useful comments and providing us with some interesting examples. The second author would like to thank the Universitat Polit\`ecnica de Catalunya and Centre de Recerca Matem\`atica (CRM) for their hospitality and assistance.
\bibliographystyle{alpha}
\bibliography{References}

\end{document}